\newtheorem{thm}{Theorem}[section]
\newtheorem{Obser}[thm]{Observation}
\newtheorem{Lemma}[thm]{Lemma}
\newtheorem{cor}[thm]{Corollary}
\newtheorem{pro}[thm]{Proposition}
\title{2-Resonant fullerenes\thanks{This
work is supported by NSFC grant no.10831001} }
\author{Rui Yang, Heping Zhang\footnote{Corresponding author.}
\\\small{School of Mathematics and Statistics, Lanzhou
University, Lanzhou, Gansu 730000, P. R. China}
\\\small{E-mail addresses: yangr2008@lzu.edu.cn, zhanghp@lzu.edu.cn}}
\date{}
\begin{document}

\maketitle \thispagestyle{empty}

\begin{abstract}
 A {\em fullerene graph} $F$ is a planar cubic graph
 with exactly 12 pentagonal faces and other hexagonal faces. A set
 $\mathcal{H}$ of disjoint hexagons of $F$ is called a {\em resonant pattern} (or {\em sextet
 pattern}) if $F$ has a perfect matching $M$ such that every hexagon
 in $\mathcal{H}$ is $M$-alternating. $F$ is said to be $k$-{\em resonant}  if any $i$ ($0\leq i\leq k$) disjoint
hexagons of $F$ form a resonant pattern. It was known that each
fullerene graph is 1-resonant and all 3-resonant fullerenes are only
the nine graphs. In this paper, we show that the fullerene graphs
which do not contain the subgraph $L$ or $R$ as illustrated in Fig.
1 are 2-resonant except for the specific eleven graphs. This result
implies that each IPR fullerene is 2-resonant.

\vskip 0.5cm

 \noindent\textbf{Key words:} Fullerene graph; Perfect matching; Resonant
pattern

 \end{abstract}

\section{Introduction}
A {\em fullerene graph} is a 3-connected planar cubic  graph which has
exactly 12 pentagonal faces and other hexagonal faces. Such graphs
are suitable models for fullerene molecules: carbon atoms are
represented by vertices, whereas edges represent chemical bonds between two atoms. It is well known that a fullerene graph on $n$
vertices exists for any even $n\geq20,n\neq22$ \cite{fowler1995}.
Since the discovery of the first fullerene molecule  C$_{60}$ \cite{kroto1985}
in 1985, the fullerenes have pioneered a new field of study. Various
properties of fullerene graphs were investigated from both chemical
and mathematical points of view
\cite{buhl2001,doslic1998,doslic2002,doslic1999,fowler1995,kardos2008,kardos2009,zhang2001,zhang2007}.

Let $F$ be a fullerene graph. A {\em perfect matching} (or Kekul\'{e}
structure) of $F$ is a set of disjoint edges $M$ such that every
vertex of $F$ is incident with an edge in $M$. It has been shown
\cite{kardos2009} that fullerene graphs have exponentially many
perfect matchings. A set $\mathcal{H}$ of mutually disjoint hexagons
is called a {\em resonant pattern} (or {\em sextet pattern}) if $F$ has
a perfect matching $M$ such that every hexagon in $\mathcal{H}$ is
$M$-alternating. A fullerene graph $F$ is $k$-{\em resonant} (or
$k$-{\em coverable}, $k\geq1$) if any $i$ ($0\leq i\leq k$) disjoint
hexagons of $F$ form a resonant pattern. The concept of resonance
originates from Clar's aromatic sextet theory \cite{clar1972} and
Randi\'{c}'s conjugated circuit model \cite{randic1976,randic1977}.
The $k$-resonance of many types of graphs, including benzenoid graphs, toroidal and Klein-bottle fullerenes,
boron-nitrogen fullerene graphs and (3,6)-fullerene graphs, were
investigated extensively
\cite{chen1993,Li2011,yang2011,ye2009,zhang2004,zhang2010,zheng1991}.

In \cite{ye2009} Ye et al. showed that every
fullerene graph is 1-resonant and there are exactly nine
fullerene graphs
$F_{20},F_{24},F_{28},F_{32},F_{36}^{1},F_{36}^{2},F_{40},F_{48},F_{60}$
which are also $k$-resonant for each $k\geq3$, but not all
fullerene graphs are 2-resonant. They also proved that every leapfrog fullerene graph is 2-resonant and asked a problem: whether a fullerene graph satisfying the isolated pentagon rule (IPR) is 2-resonant.  In \cite{kaiser2010}, Kaiser et al.
gave a positive answer to the problem.

\begin{figure}[h]
\begin{center}
\includegraphics[scale=0.6
]{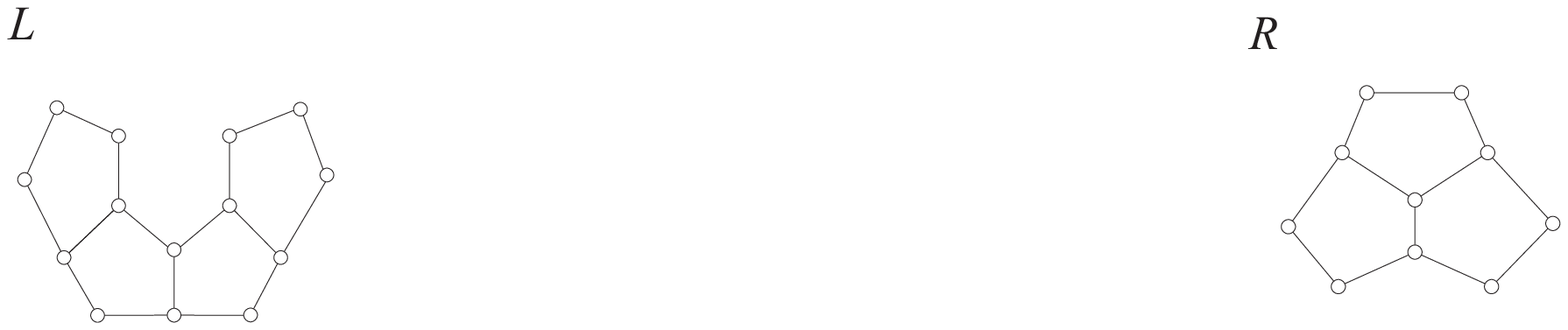}\\{Figure 1.  The subgraphs $L,R$.}
\end{center}
\end{figure}

In this paper, we consider fullerene graphs which are allowed to have some adjacent pentagons, i.e. violating IPR.
Two substructures $L$ and $R$ consisting of four and three pentagons are defined  in  Fig.  1. We characterize fullerene graphs without substructures $L$ and $R$ which are 2-resonant and obtain the following main theorem.

\begin{thm}\label{lr} Let $F$ be a fullerene graph which does not
contain the subgraph $L$ or $R$. Then
$F$ is 2-resonant except for the eleven fullerene graphs in Fig.  2.
\end{thm}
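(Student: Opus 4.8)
The plan is to prove Theorem~\ref{lr} by a structural case analysis built on the fact (from \cite{ye2009}) that every fullerene graph is 1-resonant, reducing the ``2-resonant'' condition to a statement about \emph{pairs} of disjoint hexagons. Concretely, fix a fullerene graph $F$ avoiding $L$ and $R$, and let $h_1,h_2$ be two disjoint hexagons; we must produce a perfect matching $M$ with both $h_1$ and $h_2$ being $M$-alternating. The standard tool here is the following equivalence: a set $\mathcal{H}$ of disjoint hexagons is resonant if and only if $F-\bigcup_{h\in\mathcal{H}}V(h)$ has a perfect matching, so the whole problem becomes: for all disjoint $h_1,h_2$, the graph $F':=F-V(h_1)-V(h_2)$ has a perfect matching. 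I would therefore first record this reformulation as a lemma, then invoke a parity/Tutte-type or anti-factor argument. Since $F'$ is a planar graph on an even number of vertices, the cleanest route is to show $F'$ has no ``bad cut'': one shows that deleting the $12$ vertices of two hexagons from a cyclically-$5$-edge-connected-ish cubic planar graph cannot create a Tutte obstruction, \emph{unless} the local structure around $h_1\cup h_2$ is one of finitely many configurations — and those configurations are exactly what force $L$, $R$, or one of the eleven sporadic graphs.

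The key steps, in order, would be: (1) Reduce to ``$F-V(h_1)-V(h_2)$ has a perfect matching for every disjoint pair $h_1,h_2$'' via the resonance$\leftrightarrow$matching lemma. (2) Prove a ``local'' lemma: if $F-V(h_1)-V(h_2)$ has no perfect matching, then by Tutte's theorem there is a set $S$ with $o(F'-S)>|S|$; using $3$-connectivity and planarity of $F$, and the fact that $F'$ has maximum degree $3$, bound $|S|$ and the number and size of the odd components, and localize $S$ near $h_1\cup h_2$. The cubic planar structure together with Euler's formula should force the ``defect'' to live inside a bounded region, so the obstruction is governed by the arrangement of pentagons adjacent to or between $h_1$ and $h_2$. (3) Enumerate the finitely many arrangements of $h_1$, $h_2$ and nearby pentagonal faces that can produce such an obstruction; this is where the forbidden subgraphs $L$ and $R$ enter — I expect that in every obstruction either one of $L,R$ appears, or the whole fullerene is small and coincides with one of the eleven exceptional graphs of Fig.~2. (4) Conversely (and more easily), verify directly that each of the eleven graphs in Fig.~2 is genuinely not $2$-resonant by exhibiting a specific bad pair of hexagons, so the exception list is tight. (5) Deduce the IPR corollary: an IPR fullerene has no two adjacent pentagons, hence a fortiori contains neither $L$ nor $R$ (each of which bundles several mutually adjacent pentagons), and is not among the eleven small exceptions (which all violate IPR), so it is $2$-resonant — recovering the result of \cite{kaiser2010}.

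I would organize the case analysis around the relative position of $h_1$ and $h_2$: the far case (the two hexagons are ``independent,'' no short path or shared small face between them), where one patches together a near-perfect matching of $F$ restricted near $h_1$ with one near $h_2$ using 1-resonance twice plus an alternating-path exchange argument; and the near case (the hexagons are close, sharing adjacent faces or joined by a short path), which is where pentagons get squeezed in and the forbidden configurations arise. A useful intermediate device is the notion of an $M$-alternating set and the fact that if $h$ is $M$-alternating we can rotate $M$ along $h$; combined with the theorem that $F$ has a perfect matching in which a prescribed single hexagon is alternating, one tries to choose the matching for $h_1$ and then repair it along alternating paths to also make $h_2$ alternating, the repair failing only when an odd-length obstruction (again a small pentagonal cluster, i.e. $L$ or $R$) blocks the alternating path.

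The hard part will be step (3): showing the enumeration of obstructing configurations is \emph{complete} and that every one of them genuinely contains $L$ or $R$ (or forces a small sporadic fullerene). This requires carefully controlling how pentagons can cluster in the ``gap'' between or around $h_1$ and $h_2$ using discharging / Euler's formula on the patch bounded by the faces incident to $h_1\cup h_2$, and ruling out infinitely many potential bad patterns by an argument that any sufficiently large bad patch must contain a copy of $L$ or $R$. The secondary technical nuisance is bookkeeping in the Tutte-set analysis of step (2) — ensuring the odd-component count is tight enough that $|S|$ is forced to be small (I expect $|S|\le$ a small constant), so that only finitely many local pictures survive; getting a clean bound here, rather than a messy one, is what makes the subsequent enumeration tractable.
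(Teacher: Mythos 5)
Your overall skeleton --- reduce $2$-resonance to ``$F-V(h_1)-V(h_2)$ has a perfect matching,'' take a Tutte-type obstruction when it fails, and enumerate the surviving configurations --- is indeed the skeleton of the paper's proof. But the proposal stops exactly where the real work begins, and the two mechanisms you gesture at for steps (2) and (3) are not the ones that make the argument go through; as stated they would fail. First, plain Tutte's theorem with ``odd components'' is too weak. The paper uses the stronger matchable-set form (Theorem \ref{matchable}): there is a set $A$ such that every component of $F-V(h_1\cup h_2)-A$ is \emph{factor-critical} and there are more components than $|A|$. Factor-criticality is essential because, by Observation \ref{2connected}, every nontrivial factor-critical subgraph of a fullerene is $2$-connected, so each nontrivial component is surrounded by a \emph{cyclic} edge-cut; a degree count then shows this cut has size $5$, $7$ or $9$, and the classification theorems for cyclic $5$-, $6$- and $7$-edge-cuts (Theorems \ref{cyclic5}--\ref{cyclic7}, \ref{obtainedbytrivialones}) identify these components explicitly. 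Without factor-criticality you get no $2$-connectivity, hence no cyclic edge-cut, hence no access to this classification, and the ``size of the odd components'' is not bounded by anything.

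Second, your claim that ``Euler's formula forces the defect to live inside a bounded region near $h_1\cup h_2$'' is not how the localization works and is not true in the form you need: the set $A$ and the components of $F-H-A$ can a priori sit anywhere in $F$. What actually controls them is a global pentagon count, not locality. The paper observes that $F-H-D^*-E(A,A)$ is bipartite, so \emph{all twelve pentagons} must meet $H\cup D^*$ or an $E(A,A)$ edge; combined with the degree-counting bound $|E(A,H)|+|E(A,A)|+s(\mathcal{D})\le 3$ and the bound $p(V(h_i))\le 4$ (which is where the exclusion of $L$ and $R$ enters), this pins down every case. Your proposal never identifies either of these counting inequalities, yet virtually every contradiction in the paper's Sections 3--6 is obtained from them. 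Finally, the ``choose a matching for $h_1$ and repair along alternating paths to fix $h_2$'' device in your far/near dichotomy is not developed at all, and there is no reason the repair should fail only at a copy of $L$ or $R$; the paper does not argue this way. So while the high-level plan is sound, the proposal is missing the specific ideas (factor-critical components plus cyclic-edge-cut classification, and the twelve-pentagon covering count) without which neither the localization nor the completeness of the enumeration can be carried out.
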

\begin{figure}[h]
\begin{center}
\includegraphics[scale=0.75
]{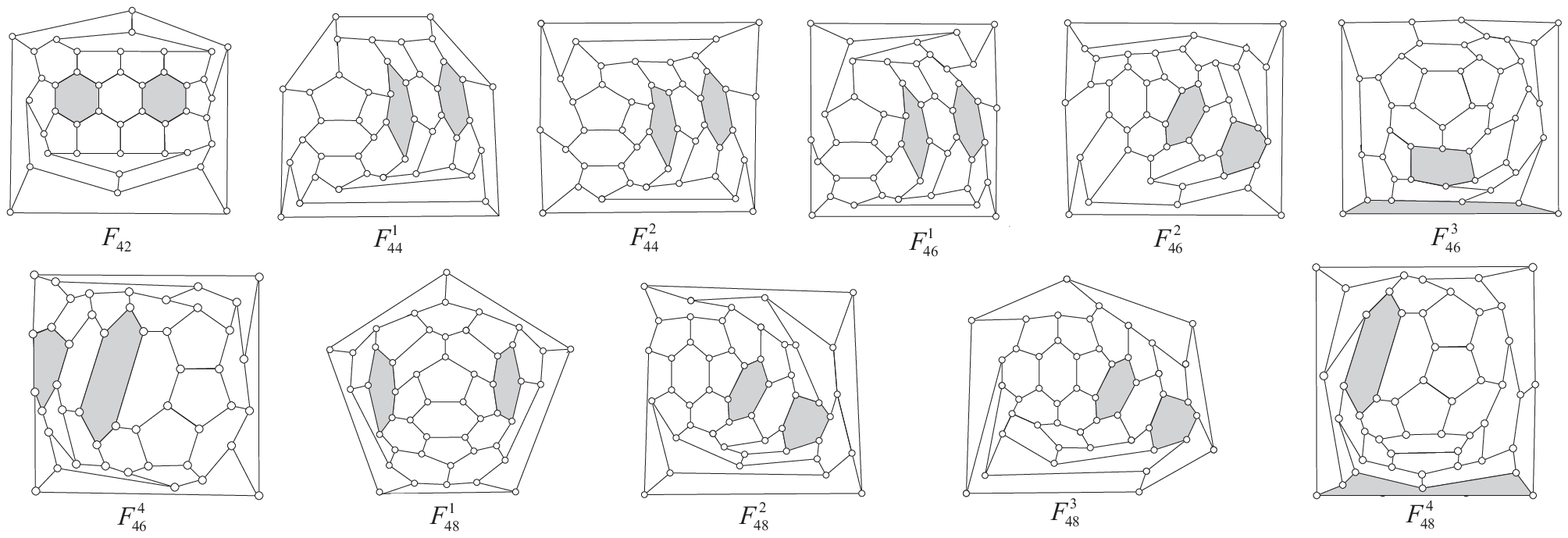}\\{Figure 2.  The eleven non-2-resonant fullerene graphs  without   subgraph $L$ or $R$.}
\end{center}\label{5}
\end{figure}

It is easy to verify that the eleven fullerene graphs depicted in Fig.  2 are not 2-resonant since the two grey hexagons do not form a resonant pattern.

A  fullerene graph is said to be IPR if it satisfies the isolated pentagon rule (IPR) (i.e. any pentagons are disjoint).  It is obvious that every IPR fullerene graph has no substructures $L$ or $R$ and each graph in Fig.  2 has at least a pair of adjacent pentagons. So  Theorem \ref{lr} implies immediately the following known result.

\begin{cor}\label{IPR} \cite{kaiser2010} Every IPR fullerene graph is 2-resonant.
\end{cor}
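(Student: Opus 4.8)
The plan is to obtain the corollary as an immediate consequence of Theorem~\ref{lr}: it suffices to check two inclusions of classes of fullerene graphs, namely that (a) every IPR fullerene graph contains neither $L$ nor $R$ as a subgraph, and (b) none of the eleven exceptional graphs displayed in Fig.~2 is IPR. Once both are verified, any IPR fullerene $F$ falls under the hypothesis of Theorem~\ref{lr} and is distinct from each of the eleven exceptions, hence $F$ is 2-resonant.

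For step (a), recall that ``IPR'' means the twelve pentagonal faces of $F$ are pairwise vertex-disjoint. Both configurations in Fig.~1 are planar pieces in which two of the constituent pentagons share an edge (indeed $L$ consists of four pentagons and $R$ of three, each arranged so that consecutive ones are adjacent). If $F$ contained a copy of $L$ or of $R$, then—using the planarity and cubicity of $F$ so that the copy carries its intended face structure—two pentagonal faces of $F$ would be adjacent, contradicting IPR. Hence no IPR fullerene graph contains $L$ or $R$. For step (b), one simply inspects Fig.~2: as already remarked in the text, each of the eleven graphs drawn there contains at least one pair of pentagons sharing an edge, so none of them satisfies IPR. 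Combining (a) and (b) with Theorem~\ref{lr} yields that every IPR fullerene graph is 2-resonant, which is Corollary~\ref{IPR}.

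The ``hard part'' is not in the corollary at all—all the substantive work lives in Theorem~\ref{lr}. The only point demanding a little care here is the bookkeeping in step (a): one must be sure that ``$F$ contains $L$ (resp.\ $R$) as a subgraph'' in the statement of Theorem~\ref{lr} is exactly the hypothesis that is negated by the disjoint-pentagon condition, i.e.\ that an embedded copy of $L$ or $R$ genuinely certifies two adjacent pentagonal faces of $F$ rather than merely an abstract subgraph isomorphism. Because $L$ and $R$ are defined in Fig.~1 as fragments of the planar face structure of a fullerene, this identification is automatic, and the corollary follows with no further argument.
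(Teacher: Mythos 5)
Your proposal is correct and follows exactly the paper's own (one-line) derivation: the text preceding the corollary observes that every IPR fullerene contains neither $L$ nor $R$ (each of which forces adjacent pentagons) and that each of the eleven exceptional graphs in Fig.~2 has a pair of adjacent pentagons, so Theorem~\ref{lr} applies directly. Nothing further is needed.
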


\section{Definitions and preliminaries}
Let $G$ be a connected plane graph with vertex-set $V(G)$ and
edge-set $E(G)$. For $X, Y\subset V(G)$, we define $E(X,Y)$ the set
of edges having one end-vertices  in $X$ and the other in $Y$. We simply
write $\bigtriangledown(X)$ for $E(X,\overline{X})$ where
$\overline{X}= V(G)-X$. For  subgraphs $H$ and $H'$ of $G$, we also simply write $\bigtriangledown (H)$ for $\bigtriangledown (V(H))$, and $E(H,H')$ for $E(V(H),V(H'))$;  We call $H$ is {\em incident} to $H'$ if $V(H)\cap
V(H')=\emptyset$ and $E(H,H')\neq \emptyset$. For a
face $f$ of $G$, its boundary is a closed walk and we often
represent it by its boundary if unconfused. Pentagonal and hexagonal faces are referred to simply as pentagons
and hexagons. Use $\partial(G)$ to denote the boundary of
the exterior face of $G$.

A graph $G$ is called {\em factor-critical} if
$G-v$ has a perfect matching for every vertex $v\in V(G)$. A
factor-critical graph is {\em trivial} if it consists of a single
vertex.

\begin{Obser}\cite{kaiser2010}\label{2connected} Every non-trivial factor-critical
subgraph of a fullerene graph is 2-connected.
\end{Obser}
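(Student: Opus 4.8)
The plan is to argue by contradiction, using essentially only that a fullerene graph is cubic (planarity and $3$-connectedness of $F$ play no role here). First I would record the elementary fact that a factor-critical graph is connected: if such a graph $G$ were the disjoint union of $G_1$ and $G_2$, then deleting a vertex $v$ of $G_1$ leaves a perfect matching, which forces $G_2$ to have a perfect matching and, on restriction, forces $G_1-v$ to have one as well; then $|V(G_1)|$ would be simultaneously even and odd. Hence any non-trivial factor-critical subgraph $G$ of a fullerene graph $F$ is connected with $|V(G)|\ge 2$. Assume for contradiction that $G$ has a cut vertex $v$, so $G-v$ has components $C_1,\dots,C_k$ with $k\ge 2$.

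Next I would extract two structural facts. First, each $|V(C_i)|$ is even: since $G$ is factor-critical, $G-v$ has a perfect matching, and this matching contains no edge between two distinct components, so it restricts to a perfect matching of each $C_i$; in particular each $C_i$ has at least two vertices. Second, because $F$ is cubic we have $\deg_G(v)\le 3$, while $v$ has at least one neighbour in each $C_i$; since these neighbour counts are positive and sum to $\deg_G(v)\le 3$ with $k\ge 2$, at least one component — relabel it $C_1$ — contains exactly one neighbour of $v$ in $G$ (if every $C_i$ contributed at least two, the sum would be at least $2k\ge 4$). Call this unique neighbour $v_1$; note $v_1\in V(G)\setminus\{v\}$.

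The contradiction then comes from deleting the \emph{right} vertex, namely $v_1$ rather than an arbitrary vertex of $C_1$. By factor-criticality $G-v_1$ has a perfect matching. But in $G$ every edge leaving $V(C_1)$ ends at $v$ (as $C_1$ is a component of $G-v$), and the only such edge is $vv_1$; hence in $G-v_1$ the set $V(C_1)\setminus\{v_1\}$ sends no edge to the rest of the graph and is therefore a union of connected components of $G-v_1$ of total order $|V(C_1)|-1$, which is odd. Thus $G-v_1$ has a component of odd order and admits no perfect matching, contradicting factor-criticality. Therefore $G$ has no cut vertex, i.e. $G$ is $2$-connected.

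I expect the main difficulty to be conceptual rather than computational: a general non-trivial factor-critical graph need not be $2$-connected (two triangles sharing a single vertex is factor-critical with a cut vertex), so the argument must genuinely use the degree bound $\deg_G(v)\le 3$ supplied by cubicity, and it must be set up to delete a neighbour of $v$ whose removal severs an odd piece from the rest of $G$ — deleting a generic vertex of $C_1$ gives no contradiction. The remaining bookkeeping (parity of the $|V(C_i)|$ and locating a component with a single neighbour of $v$) is routine.
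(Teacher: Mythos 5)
Your proof is correct: the parity argument showing each component of $G-v$ has even order, the use of $\deg_G(v)\le 3$ to locate a component $C_1$ joined to $v$ by a single edge, and the deletion of that unique neighbour $v_1$ to isolate an odd piece is exactly the standard argument, and you rightly observe that only subcubicity (not planarity or $3$-connectedness) is used. The paper itself states this observation without proof, citing Kaiser, Stehl\'{\i}k and \v{S}krekovski, whose argument is essentially the same (factor-critical graphs are $2$-edge-connected, and a cut vertex in a subcubic graph would force a bridge), so there is nothing to add.
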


We call a vertex set $S\subseteq V(G)$ {\em matchable} to
$\mathcal{C}_{G-S}$ if the (bipartite) graph $G_{S}$, which arises
from $G$ by contracting the components $C \in \mathcal{C}_{G-S}$ to
single vertices and deleting all the edges inside $S$, contains a
matching of $S$. The following critical theorem (\cite{diestel2006},
Theorem 2.2.3) may be viewed as a strengthening of Tutte's 1-factor
theorem \cite{tutte1947}.

\begin{thm}\label{matchable} Every graph $G$ contains a vertex set
$S$ with the following two properties:

(1) $S$ is matchable to $\mathcal{C}_{G-S}$;

(2) Every component of $G-S$ is factor-critical.

Given any such set $S$, the graph $G$ contains a perfect matching if
and only if $|S|=|\mathcal{C}_{G-S}|$.

\end{thm}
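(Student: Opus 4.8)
\medskip
The plan is to establish the two assertions separately: first exhibit one set $S$ satisfying (1) and (2), and then prove the displayed equivalence for \emph{every} set $S$ having these two properties. For the construction I would take an \emph{optimal barrier of maximum size}: writing $o(H)$ for the number of odd components of a graph $H$, choose a set $S\subseteq V(G)$ for which $o(G-T)-|T|$ is largest over all $T\subseteq V(G)$, and, subject to that, with $|S|$ as large as possible. This is possible since $V(G)$ is finite and $T=\emptyset$ gives $o(G)\ge 0$; in particular no induction is needed.

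For this $S$ I would prove three claims. (i) \emph{Every component of $G-S$ is odd.} If $C$ were an even component and $v\in C$, then $C-v$ has odd order and hence an odd component, so $o(G-(S\cup\{v\}))-|S\cup\{v\}|\ge o(G-S)-|S|$, contradicting the maximality of $|S|$. (ii) \emph{Every component $C_i$ of $G-S$ is factor-critical.} Arguing by contradiction, if $C_i-v$ has no perfect matching for some $v\in C_i$, Tutte's $1$-factor theorem \cite{tutte1947} supplies $T\subseteq V(C_i-v)$ with $o((C_i-v)-T)>|T|$, which by a parity check (using that $|C_i|$ is odd) improves to $o((C_i-v)-T)\ge|T|+2$; then $S\cup\{v\}\cup T$ is again an optimal barrier, but strictly larger than $S$ --- a contradiction. (iii) \emph{$S$ is matchable to $\mathcal{C}_{G-S}$.} Otherwise the deficiency form of Hall's theorem, applied in the bipartite graph $G_S$, yields $T\subseteq S$ adjacent in $G_S$ to fewer than $|T|$ of the (odd) components of $G-S$; deleting $S\setminus T$ from $G$ then leaves all the components not adjacent to $T$ intact, so $o(G-(S\setminus T))-|S\setminus T|\ge o(G-S)-|S|+1$, once more contradicting the choice of $S$. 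Together (i)--(iii) give properties (1) and (2).

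For the equivalence, fix any $S$ with properties (1) and (2) and write $\mathcal{C}_{G-S}=\{C_1,\dots,C_k\}$; each $C_i$, being a nonempty factor-critical graph, has odd order, and $G$ has no edge between distinct $C_i$. If $G$ has a perfect matching $M$, the odd order of each $C_i$ forces at least one vertex of $C_i$ to be $M$-matched to a vertex of $S$, giving an injection $\mathcal{C}_{G-S}\to S$ and hence $k\le|S|$; since (1) gives $|S|\le k$, equality holds. Conversely, if $|S|=k$ then the matching of $S$ in $G_S$ guaranteed by (1) is perfect in $G_S$ and so lifts to $k$ pairwise disjoint edges of $G$, one entering each $C_i$ and together covering $S$; writing $w_i$ for the endpoint in $C_i$ of the edge entering $C_i$, factor-criticality gives a perfect matching $M_i$ of $C_i-w_i$, and the $k$ edges together with $M_1,\dots,M_k$ form a perfect matching of $G$.

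I expect claim (ii) to be the main obstacle: the optimality defining $S$ only concerns the quantity $o(G-T)-|T|$, so one must turn a \emph{local} matching deficiency inside a single component into a \emph{global} enlargement of the barrier, and it is precisely here that Tutte's theorem and the parity bound come in. The other steps are short counting arguments, and the proof incidentally recovers the Gallai--Edmonds structure without appealing to it.
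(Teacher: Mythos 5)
Your proof is correct. Note, however, that the paper does not prove this statement at all: it is quoted verbatim as Theorem~2.2.3 of the cited book of Diestel, so there is no in-paper argument to compare against. Your argument (take $S$ maximizing $o(G-T)-|T|$ and, subject to that, $|S|$; deduce oddness of components by parity, factor-criticality via Tutte's theorem plus the parity upgrade $o\ge |T|+2$, matchability via the defect form of Hall's theorem, and the final equivalence by the two counting arguments) is exactly the standard proof from that source, with the only cosmetic difference that you invoke Tutte's theorem directly inside a component rather than phrasing that step as an induction on $|G|$.
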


An {\em edge-cut} of a connected graph $G$ is a set of edges $C\subset
E(G)$ such that $G- C$ is disconnected. An edge-cut $C$ of $G$ is
{\em cyclic} if each component of $G- C$ contains a cycle. A graph $G$
is {\em cyclically k-edge-connected} if $G$ cannot be separated
into two components, each containing a cycle, by removing less than
$k$ edges. The {\em cyclical edge-connectivity} of $G$, denote by
$c\lambda(G)$, is the greatest integer $k$ such that $G$ is cyclically
$k$-edge-connected. For a fullerene graph $F$, T. Do\v{s}li\'{c}
\cite{doslic2003}, and Qi and Zhang \cite{Qi2008} proved that $c\lambda(F)=5$; F. Kardo\v{s} and R. \v{S}krekovski \cite{kardos2008} obtained the same result by  three operations on cyclic edge-cuts. There are at least
twelve cyclic 5-edge-cuts---formed by the edges pointing outward
each pentagonal face and there are also cyclic 6-edge-cuts---formed
by the edges pointing outward each hexagonal face. These cyclic
5- and 6-edge-cuts are called {\em trivial}. A cyclic edge-cut $C$ of a
fullerene graph $F$ is {\em non-degenerated} if both components of
$F-C$ contain precisely six pentagons. Otherwise, $C$ is
{\em degenerated}. Obviously, the trivial cyclic edge-cuts are
degenerated.

F. Kardo\v{s} and R. \v{S}krekovski \cite{kardos2008}, and K. Kutnar and D. Maru\v{s}i\v{c} \cite{Kutnar2008} independently gave the nanotube structure of fullerene graphs with a non-trivial 5-cyclic edge-cut.

\begin{thm}\label{cyclic5}\cite{kardos2008,Kutnar2008}  A fullerene graph has non-trivial
cyclic 5-edge-cuts if and only if it is isomorphic to the graph
$G_{k}$ for some integer $k\geq 1$, where $G_{k}$ is the fullerene
graph comprised of two caps formed of six pentagons joined by k
layers of hexagons (see Fig.  3).
\end{thm}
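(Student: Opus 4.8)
I would prove the equivalence by handling the two directions separately; the forward implication is routine and the converse carries essentially all of the difficulty.

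\textbf{The easy direction.} In $G_k$ with $k\ge 1$, the five edges joining the top cap to the first layer of hexagons -- equivalently, any five edges separating two consecutive layers of hexagons -- form an edge-cut $C$ for which each component of $G_k-C$ contains an entire cap of six pentagons, hence a cycle; thus $C$ is cyclic, and since $C$ is not the set of edges incident with a single face it is non-trivial. (One also checks, from the construction, that $G_k$ really is a fullerene graph: planar, cubic, $3$-connected, with exactly the twelve pentagons of its two caps.)

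\textbf{The converse: set-up and bookkeeping.} Suppose $F$ has a non-trivial cyclic $5$-edge-cut. Since $c\lambda(F)=5$, every cyclic edge-cut of $F$ has at least five edges, so such a cut is a minimum cyclic edge-cut. Among all non-trivial cyclic $5$-edge-cuts, choose one, $C$, together with a labelling $F-C=F_1\cup F_2$ of its two sides, for which $|V(F_1)|$ is as small as possible. First I would show $F_1$ and $F_2$ are $2$-connected: a cut vertex $v$ of $F_i$ would, by $3$-connectivity of $F$, force each side of $F_i-v$ to meet at least two edges of $C$, and then splitting off one side would give a cyclic edge-cut of $F$ with at most four edges, or would make $F_i$ a tree, either way a contradiction. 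Hence $\partial F_1$ and $\partial F_2$ are cycles, the five cut edges attach to $\partial F_1$ at five distinct vertices $u_1,\dots ,u_5$ in this cyclic order, and $C$ dissects the sphere so that exactly five faces $g_1,\dots ,g_5$ of $F$ straddle it, $e_i$ separating $g_{i-1}$ from $g_i$ (indices mod $5$); degenerate interfaces (cut edges sharing an endpoint, faces $g_i$ repeating) are shown to contradict $c\lambda(F)=5$ or to reduce to small base cases. Passing to the patch $\widehat{F_1}$ (namely $F_1$ with its boundary cycle, whose internal faces are the $p$ pentagons and $h$ hexagons of $F$ inside $F_1$, all internal vertices of degree $3$, and exactly $u_1,\dots ,u_5$ of degree $2$), Euler's formula plus an edge--face incidence count give the identities (number of degree-$3$ vertices) $=2p+2h-2$ and $|\partial F_1|=p+4$; non-triviality of $C$ forces $p\ge 2$, likewise $p'\ge 2$ for $F_2$, and counting the twelve pentagons of $F$ yields $p+p'+p_g=12$, where $p_g$ is the number of pentagons among $g_1,\dots ,g_5$.

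\textbf{The crux: the minimal side is a cap.} Look at the ring $R$ of faces of $F_1$ incident with $\partial F_1$. If $R$ were composed entirely of hexagons forming a clean annulus, its inner edges would form a non-trivial cyclic $5$-edge-cut (non-trivial because the six pentagons of $F_1$ lie strictly inside it) with a strictly smaller $F_1$-side, contradicting minimality; so $R$ contains a pentagon (or degenerates, handled as above). Using only $|\partial F_1|=p+4$, the presence of a pentagon on $R$, and the absence of cyclic $\le 4$-edge-cuts, I would then run a local/discharging analysis of how pentagons and hexagons can tile the disc bounded by the $(p{+}4)$-gon: giving each pentagon curvature $+1$ (total $p$) and tracking the five ``corners'' $u_i$, one shows that pentagons cannot be pinched off in small cyclic pieces, which forces $p\le 6$, hence (with $p+4$ even) $p=6$, and finally the unique cap adjacency pattern -- a central pentagon ringed by five pentagons, boundary a $10$-cycle. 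Thus $F_1$ is the canonical six-pentagon cap $\mathcal C$.

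\textbf{The other side, and where the work is.} Now $F_2$ has boundary a $10$-cycle and exactly six pentagons, so the tiling analysis of the previous step (needing only $c\lambda(F)=5$, not minimality) applies to it: the ring of faces incident with $\partial F_2$ either contains a pentagon, whence $F_2\cong\mathcal C$ and $F$ is two caps joined at five edges, i.e.\ $F\cong G_1$; or it is a clean ring of hexagons whose removal exposes a fresh boundary $10$-cycle and a new non-trivial cyclic $5$-edge-cut $C'$ of $F$. Peeling off such hexagon layers one at a time and inducting on $|V(F_2)|$ shows $F_2$ is $\mathcal C$ together with $k-1\ge 0$ layers of hexagons, so reassembling across $C$ gives that $F$ is two copies of $\mathcal C$ joined by $k\ge 1$ layers of hexagons, i.e.\ $F\cong G_k$. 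The single hard point is the crux step: turning ``no cyclic $\le 4$-edge-cut'' plus ``smallest $F_1$-side'' into the rigidity that a disc bounded by a $(p{+}4)$-gon and tiled in the cubic pattern by $p$ pentagons and some hexagons must be the six-pentagon cap -- a finite but delicate enumeration of the boundary face patterns, and exactly the place where the two cited proofs diverge (one by discharging, one via the symmetry of such patches); the $2$-connectivity of the sides, the degenerate interfaces, and the peeling induction are routine by comparison.
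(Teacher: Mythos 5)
The paper does not prove this theorem; it is quoted verbatim from \cite{kardos2008,Kutnar2008}, and the only machinery the paper imports from those sources is Theorem 2.6 (every cyclic edge-cut is obtained from the trivial ones by the reverse operations $O_1,O_2,O_3$), which is how Kardo\v{s} and \v{S}krekovski actually derive the classification: one enumerates the finitely many ways a 5-edge-cut can be generated from a trivial pentagonal cut and observes that the only surviving non-trivial configurations are the nanotube cuts of $G_k$. Your route --- minimize one side of the cut, establish 2-connectivity of the sides, do patch bookkeeping, identify the minimal side as the six-pentagon cap, then peel hexagon layers --- is genuinely different and closer in spirit to the Kutnar--Maru\v{s}i\v{c} proof. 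The easy direction, the 2-connectivity argument, the identity $|\partial F_1|=p+4$ (from the standard patch relation $b_2-b_3=6-p$ with $b_2=5$), and the peeling induction are all sound.

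The gap is at the step you yourself flag as the crux, and it is not merely ``delicate but finite'': as written, the argument does not establish $p=6$. First, ``non-triviality of $C$ forces $p\ge 2$'' is exactly the assertion that every degenerated cyclic 5-edge-cut is trivial, which is a nontrivial part of the cited theorems and cannot be assumed. Second, the inference ``$p\le 6$, hence (with $p+4$ even) $p=6$'' fails twice over: the boundary cycle of a patch in a fullerene graph has no a priori parity constraint, and even granting $p+4$ even you would only get $p\in\{2,4,6\}$. What is actually needed is a proof that a patch with five degree-2 boundary vertices, no internal faces other than pentagons and hexagons, and no cyclic $\le 4$-edge-cut inside it, must have $p\in\{1,6\}$, and that $p=6$ forces the canonical cap; this is the entire content of the hard direction and your discharging sketch does not carry it out. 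Similarly, the claim that a pentagon-free boundary ring is a ``clean annulus'' of exactly five hexagons whose inner edges form another 5-edge-cut needs the alternation of the five degree-2 and five degree-3 vertices on the 10-cycle, which again is asserted rather than proved. So the skeleton is viable, but the theorem is not proved by what you have written; to close it within this paper's toolkit the cleanest option is to fall back on Theorem 2.6 and the operation enumeration, as the cited authors do.
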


\begin{figure}[h]
\begin{center}
\includegraphics[scale=0.5
]{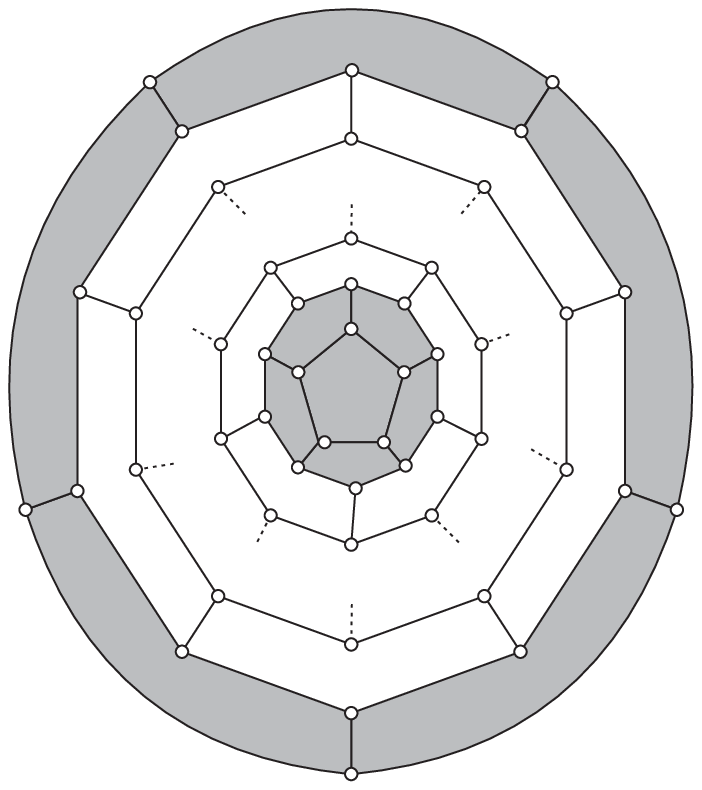}\\{Figure 3.  The graph $G_{k}$ are the only fullerene
graphs with non-trivial cyclic 5-edge-cuts.}\label{gk}
\end{center}
\end{figure}

Further F. Kardo\v{s} and R. \v{S}krekovski  listed
the degenerated cyclic 6-edge-cuts in fullerene graphs.

\begin{figure}[h]
\begin{center}
\includegraphics[scale=0.7
]{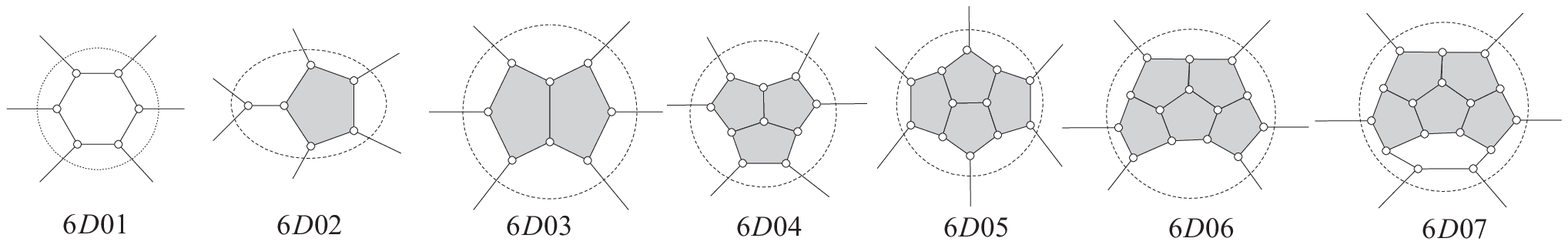}\\{Figure 4.  Degenerated cyclic
6-edge-cuts.}
\end{center}\label{degeneratedcyclic6edgecuts}
\end{figure}
 \begin{thm}\cite{kardos2008}\label{cyclic6}  There are precisely seven
 non-isomorphic graphs that can be obtained as components of
 degenerated cyclic 6-edge-cuts with less than six pentagons (see
 Fig.  4).

 \end{thm}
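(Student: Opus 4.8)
The plan is to analyse a component $H$ of a degenerated cyclic $6$-edge-cut $C$ on the side carrying $p\le 5$ pentagons; such a side exists because $F$ has $12$ pentagons in total, so ``degenerated'' forces at least one component to have strictly fewer than six. First I would carry out a preliminary reduction. Using that $F$ is $3$-edge-connected and that $c\lambda(F)=5$, either $H$ is a ``clean'' fragment — $2$-connected, with the six edges of $C$ meeting $H$ in six distinct vertices, so that $H$ is a plane graph that is cubic except for exactly those six vertices of degree $2$ on its boundary, and all inner faces are pentagons or hexagons — or we are in one of a few degenerate situations (a cut edge meeting $H$ twice, or a cut vertex of $H$). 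Each such degenerate situation yields, after contracting, a cyclic edge cut of $F$ of size at most $5$, hence by Theorem \ref{cyclic5} either forces $F\cong G_k$ (a case handled directly from the explicit description of $G_k$) or a \emph{trivial} cyclic $5$-edge-cut, which pins down $H$ as one of a handful of tiny explicit fragments; so no genuine fragment is lost. For a clean $H$, writing $h$ for its number of hexagons, Euler's formula gives $|V(H)|=2(p+h+2)$ and, comparing face-degree sums, that the boundary cycle $\partial(H)$ has length exactly $p+6$. Hence $\partial(H)$ carries the $6$ degree-$2$ vertices and $p$ degree-$3$ vertices, the interior $H_0:=H-V(\partial H)$ has $p+2h-2$ vertices, and the number $r$ of edges joining $\partial(H)$ to $H_0$ (plus twice the number of chords of $\partial(H)$) equals $p\le 5$.

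The key step is to bound $h$. The $\le p$ edges leaving $V(H_0)$ form an edge cut of $F$ of size at most $5$ separating $V(H_0)$ from the rest of $F$. Since $F$ is $3$-edge-connected, no component of $H_0$ can be attached to $\partial(H)$ by fewer than three edges, and since $r\le 5$ this forces $H_0$ to be connected (or empty) and forbids chords. Since $c\lambda(F)=5$, there is no cyclic $k$-edge-cut with $k\le 4$, so if $p\le 4$ then $H_0$ is acyclic; an edge count ($|E(H_0)|=p+3h-3$ against $|V(H_0)|-1=p+2h-3$) then forces $h=0$. If $p=5$ and $H_0$ contains a cycle, the five edges out of $V(H_0)$ form a cyclic $5$-edge-cut, so either $F\cong G_k$ or (trivial cut) $H_0$ is a single pentagon and $h=1$; otherwise $H_0$ is acyclic and again $h=0$. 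Thus, apart from $F\cong G_k$, we always have $h\in\{0,1\}$, so $|V(H)|\le 16$.

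With $h$ bounded the classification is a finite check. In the case $h=0$ we have $|V(H_0)|=p-2$ and $|E(H_0)|=p-3$, so $H_0$ is empty ($p=2$), a single vertex ($p=3$), a single edge ($p=4$), or a path on three vertices ($p=5$); in each case the positions of the $r=p$ ``spokes'' on the $(p+6)$-cycle are forced by the requirement that every inner face be a pentagon, giving one fragment for each of $p=2,3,4,5$ — two adjacent pentagons, three pentagons about a common vertex, and so on — together with the single hexagon for $p=0$ ($p=1$ is impossible for a clean $H$, since $|V(H_0)|=p-2<0$, and otherwise produces a bridge of $F$). The case $p=5$, $h=1$ gives a pentagon surrounded by four pentagons and one hexagon, and inspecting the degenerated $6$-edge-cuts of the nanotubes $G_k$ supplies the one remaining fragment. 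Checking that these candidates are pairwise non-isomorphic and that nothing else survives yields exactly the seven graphs of Fig.~4.

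The main obstacle is the bookkeeping in the two reductions: ruling out cut vertices of $H$, non-distinct cut endpoints and chords of $\partial(H)$ without discarding any legitimate fragment, and then making the finite enumeration genuinely exhaustive — in particular handling the $G_k$ family carefully so that it contributes precisely the one extra fragment and no more. The conceptual content, by contrast, is entirely in the Euler-formula identities together with the fact that small cyclic edge cuts of fullerenes are rigidly constrained by $c\lambda(F)=5$ and Theorem \ref{cyclic5}.
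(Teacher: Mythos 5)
The paper offers no proof of this statement---it is quoted verbatim from \cite{kardos2008}---so I can only judge your argument on its own terms. Your Euler-formula bookkeeping for a ``clean'' component $H$ (2-connected, the six cut edges meeting six distinct vertices) is correct, and it does produce six of the seven graphs: the hexagon, two adjacent pentagons, three pentagons about a common vertex, four pentagons, five pentagons, and a central pentagon surrounded by four pentagons and one hexagon. But your final list is wrong, and the error sits precisely in the two places you dismissed as bookkeeping. First, the genuine seventh graph is a pentagon with a pendant vertex: take any pentagonal face $P$ of any fullerene and one neighbor $v$ of a vertex of $P$ lying outside $P$; then $\nabla(V(P)\cup\{v\})$ has size $4+2=6$ and both sides contain a cycle, so this is a degenerated cyclic 6-edge-cut occurring in \emph{every} fullerene, and its component is not 2-connected. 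Your reduction paragraph correctly notes that such a component collapses (via the operation $O_{1}$) onto the trivial cyclic 5-edge-cut, but you never reinstate it: none of your seven candidates has a degree-one vertex. Second, the fragment you propose to harvest from $G_{k}$ does not exist. In your $p=5$ analysis, if $H_{0}$ contains a cycle and the five edges leaving $V(H_{0})$ form a non-trivial cyclic 5-edge-cut, then by Theorem \ref{cyclic5} that cut is non-degenerated, so $H_{0}$ contains exactly six pentagons; since $H_{0}\subseteq H$ and $H$ was assumed to carry at most five, this branch is vacuous and contributes nothing. You therefore reach the count of seven only by trading the true seventh graph for a phantom one.

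A smaller internal inconsistency: you claim 3-edge-connectivity ``forbids chords'' of $\partial(H)$, yet your own $p=2$ fragment (two pentagons sharing an edge) has empty $H_{0}$ and its two degree-three boundary vertices joined by a chord---indeed your identity $|E(H_{0})|=p+3h-3$ goes negative there. The cleanest repair is the route of \cite{kardos2008}, reflected in Theorem \ref{obtainedbytrivialones} and Table 1 of this paper: generate every cyclic 6-edge-cut from the trivial 5- and 6-edge-cuts by the reverse operations $(O_{1}^{-1})$, $(O_{2}^{-1})$, $(O_{3}^{-1})$ (exactly one application of $(O_{1}^{-1})$ when starting from a pentagon, none when starting from a hexagon). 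That procedure produces the pendant-vertex component automatically as the very first step, namely $O_{1}^{-1}$ applied to the trivial cut 5D giving 6D02, and then yields the remaining graphs by successive applications of $(O_{2}^{-1})$.
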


Recently, F. Kardo\v{s} et al. characterized the
degenerated cyclic 7-edge-cuts in fullerene graphs.

\begin{thm}\cite{kardos2010}\label{cyclic7} There exist 57 non-isomorphic graphs that can be
obtained as components of degenerated cyclic 7-edge-cuts with less
than six pentagons (see Fig.  5).
\end{thm}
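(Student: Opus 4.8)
The statement is quoted from \cite{kardos2010}; here is how one proves it. Let $C$ be a degenerated cyclic $7$-edge-cut of a fullerene graph $F$, and let $H$ be the component of $F-C$ carrying fewer than six pentagons (so the other component carries at least seven, which is what makes $C$ degenerated). The first step is to pin down the shape of $H$. Since $F$ is connected and planar, the other component lies inside a single face of $H$, which we take to be the outer face. A short argument using $3$-connectivity of $F$ (and, for the borderline configurations, Theorems~\ref{cyclic5} and \ref{cyclic6} to rule out a small cut hidden inside $H$) shows that $H$ is $2$-connected, so $\partial(H)$ is a genuine cycle. The faces of $F$ lying inside $H$ are exactly the internal faces of $H$ and are pentagons or hexagons; every internal vertex of $H$ has degree $3$; and because a boundary vertex incident to a cut edge would otherwise have degree $>3$ in $F$, while a non-foot boundary vertex of degree $2$ would have degree $2$ in $F$, the cycle $\partial(H)$ carries exactly seven degree-$2$ vertices (the \emph{feet}, at which the seven cut edges attach) and all other boundary vertices have degree $3$.

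With this normalization the size of $H$ is forced. Let $p\le 5$ be the number of pentagons and $h$ the number of hexagons inside $H$, and let $\ell=|\partial(H)|$. Counting degrees gives $2|E(H)|=3|V(H)|-7$; summing internal-face degrees gives $5p+6h=2|E(H)|-\ell$; and Euler's formula gives $p+h=1+|E(H)|-|V(H)|$. Solving these, $|V(H)|=2(p+h)+5$ and $\ell=p+8\le 13$. A boundary cycle of length at most $13$ encloses only a bounded region once every internal face has size $\ge 5$ and every internal vertex has degree $3$: a combinatorial isoperimetric estimate (or simply a breadth-first layering inward from $\partial(H)$) bounds $h$, hence bounds $|V(H)|$. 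Thus the admissible triples $(p,h,\ell)$ form a finite set, and for each only finitely many patches $H$ can occur — the search space is finite.

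The enumeration itself is then a finite case analysis. Fix $p$ and an admissible $h$. The boundary $\partial(H)$ is a cycle of length $p+8$ on which seven vertices are feet; the feet cut it into seven ``sides'' of prescribed total length, and I would first branch over the side-length patterns. One reconstructs the interior by peeling: choose a boundary edge, observe that the unique internal face containing it is a $5$- or $6$-gon, which gives a short list of ways to glue that face on, and recurse on the resulting smaller patch. At each step only boundedly many local extensions arise, and most branches die at once (they create a vertex of degree $>3$, a face of size $<5$, a cycle of length $<5$ — impossible by girth $5$ — or overspend the pentagon budget $p\le 5$). Pruning the search tree and discarding isomorphic copies leaves exactly the $57$ graphs of Fig.~5; since the tree is large, this part is best done with computer assistance, as in \cite{kardos2010}. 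One must also check the converse, that each of the $57$ patches genuinely occurs: a candidate $H$, with its boundary cycle of length $p+8$ and its seven feet, must be glued along seven edges to a complementary patch $H'$ whose boundary matches and which carries the remaining $12-p\ge 7$ pentagons, so that $H\cup H'$ together with the seven edges is a bona fide (simple, $3$-connected, cubic, planar, $12$-pentagon) graph in which $C$ is cyclic and degenerated. The large supply of pentagons available on the $H'$ side leaves enough freedom to construct such an $H'$ — for example by grafting onto known nanotube caps — and planarity, cubicity and $3$-connectivity of the result are verified directly.

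The principal obstacle is the exhaustiveness of the peeling enumeration in the third step: organizing the case split so that no admissible patch is missed and none is counted twice is what makes the argument long and is the reason \cite{kardos2010} relies on a computer check. The structural reductions in the first step — in particular establishing that $H$ is $2$-connected and that every cut edge meets $\partial(H)$ in a distinct degree-$2$ vertex — also require some care, but they are routine once $c\lambda(F)=5$ and Theorems~\ref{cyclic5} and \ref{cyclic6} are available.
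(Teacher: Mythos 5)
This theorem is quoted from \cite{kardos2010}; the present paper gives no proof of it, but it records (Theorem \ref{obtainedbytrivialones}) that the characterization in \cite{kardos2010} is obtained by generating all cyclic $7$-edge-cuts from the trivial $5$- and $6$-cuts via the reverse operations $(O_{1}^{-1})$, $(O_{2}^{-1})$, $(O_{3}^{-1})$ --- not by the direct patch enumeration you propose. Your route could in principle be made to work, but as written it has a genuine gap in the normalization step: it is \emph{not} true that a component $H$ of a cyclic $7$-edge-cut must be $2$-connected with seven distinct degree-$2$ ``feet''. A vertex of $H$ incident with two edges of the cut has degree one in $H$, and such components really occur; operation $(O_{1})$ of Theorem \ref{obtainedbytrivialones} exists precisely to strip such vertices, and Table 1 in Section 5 of this paper shows that several of the degenerated $7$-cut components ($D01$--$D04$) arise from $6$-cut components by $(O_{1}^{-1})$, i.e.\ by attaching a pendant vertex. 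Three-connectivity of $F$ does not transfer to $H$, and Theorems \ref{cyclic5} and \ref{cyclic6} do not ``rule out'' these configurations --- they classify the analogous ones for smaller cuts. Consequently your identities $2|E(H)|=3|V(H)|-7$ and $\ell=p+8$, and the whole peeling enumeration built on them, apply only to the $2$-connected patches with seven distinct feet and would miss part of the $57$ graphs.

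The repair is essentially to do what \cite{kardos2010} does: handle separately the components containing a degree-$1$ vertex (delete it and invoke the classification of cyclic $6$-edge-cut components, Theorem \ref{cyclic6}), and those admitting $(O_{2})$ or $(O_{3})$, thereby reducing every cyclic $7$-edge-cut to a trivial one by a bounded sequence of operations and then reversing the process to enumerate. Your remaining ingredients --- the Euler-count giving $\ell=p+8\le 13$ in the genuinely $2$-connected case, the isoperimetric bound on the interior when $p\le 5$, and the realizability check that each candidate patch extends to an actual fullerene --- are sound in spirit, but they would have to be restated for this wider class of components before the enumeration could claim exhaustiveness.
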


\begin{figure}[h]
\begin{center}
\includegraphics[scale=0.7
]{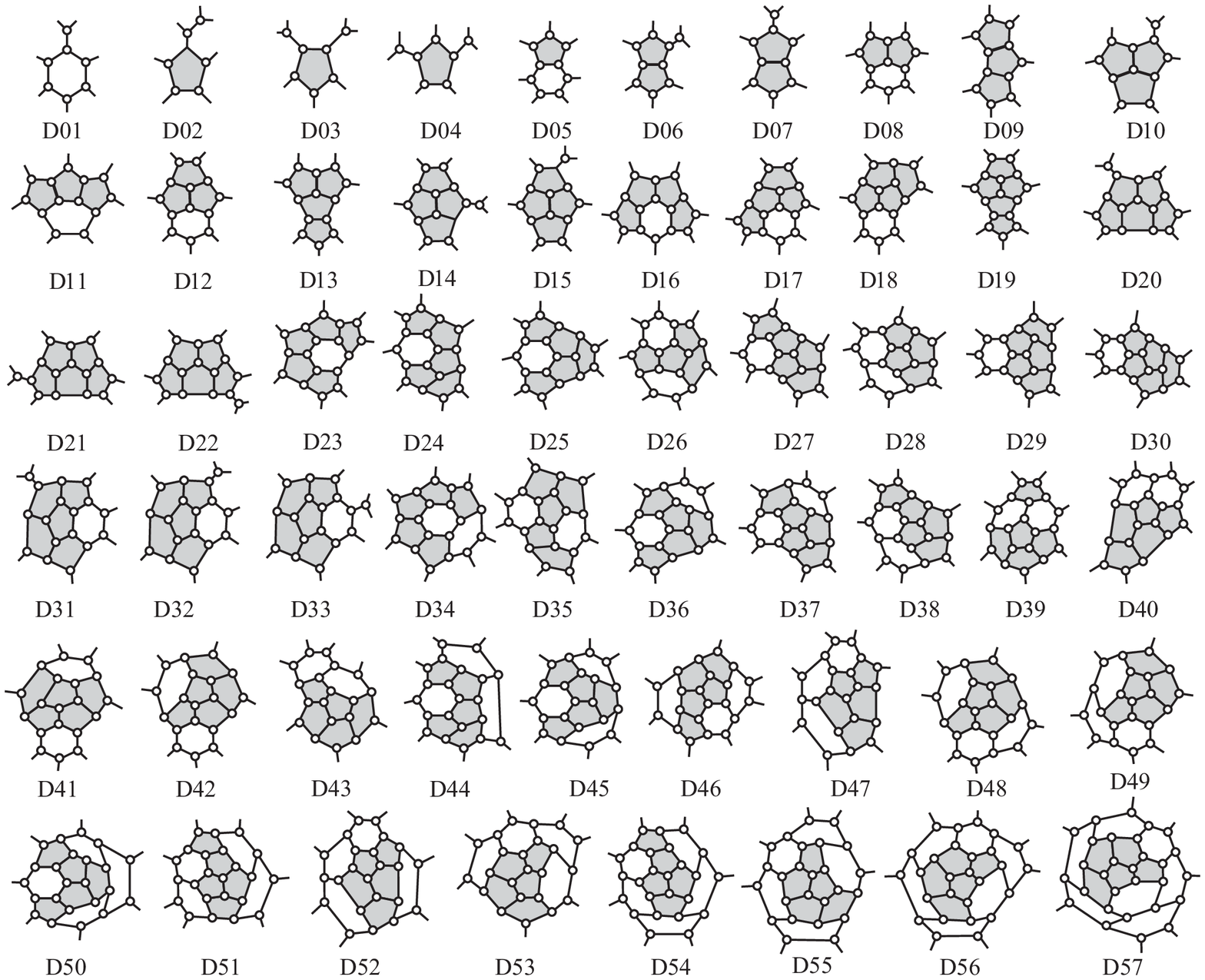}\\{Figure 5. Degenerated cyclic
7-edge-cuts.}
\end{center}
\end{figure}

Their characterizations are  based on the following result.

\begin{thm}\cite[Theorem 1]{kardos2008}\label{obtainedbytrivialones} The cyclic edge-cuts of a fullerene graph can be
constructed from the trivial ones using the reverse operations of
$(O_{1}),(O_{2})$ and $(O_{3})$.
\end{thm}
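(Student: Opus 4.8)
The plan is to prove the statement by a descent argument on cyclic edge-cuts: I would show that every \emph{non-trivial} cyclic edge-cut $C$ of a fullerene graph $F$ admits one of the forward operations $O_1,O_2,O_3$, that applying it yields a cyclic edge-cut with strictly fewer vertices on one side, and that the descent can stop only at a trivial $5$- or $6$-edge-cut; running the resulting sequence backwards then exhibits $C$ as built from a trivial cut by the reverse operations. The first step is to fix the geometry of a cyclic edge-cut. Since $c\lambda(F)=5$, any cyclic edge-cut has $|C|\ge 5$, and $F-C$ has exactly two components, each containing a cycle; let $P_1,P_2$ be the subgraphs of $F$ induced on their vertex sets, each a $2$-connected plane subgraph whose inner faces are pentagons and hexagons of $F$. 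Along the boundary cycle $\partial P_i$ the edges of $C$ occur in a cyclic order, separated by the \emph{boundary faces} of $P_i$ --- the faces of $F$ that lie in $P_i$ and are incident with at least one edge of $C$; for such a face $f$ write $t(f)$ for the number of edges of $C$ incident with it. Since $F$ has exactly twelve pentagons, at least one of $P_1,P_2$, say $P:=P_1$, contains at most six of them, and the reduction is always carried out on this side, leaving $P_2$ and its cycle untouched.

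The heart of the argument is the local structure of $P$ forced by Euler's formula. Applied to $P$ with the constraint that its inner faces are only $5$- and $6$-gons, Euler's formula yields a combinatorial-curvature identity in which the total ``defect'' concentrated on the boundary of $P$ equals $6-p(P)$, where $p(P)\le 6$ is the number of pentagons inside $P$: a boundary hexagon with $t(f)=1$ is ``convex'' and contributes positively, a boundary face with large $t(f)$ contributes negatively, and a boundary pentagon shifts the balance by one. Unwinding this bounded identity, I would show that a non-trivial $P$ must carry on its boundary one of three \emph{reducible configurations}: (a) a single hexagon bearing a maximal run of consecutive cut edges that can be re-routed to its opposite side; (b) a small pattern of two adjacent boundary faces across which the cut can be pushed; or (c) a full annular layer of hexagons adjacent to $\partial P$ that can be peeled off. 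These are precisely the situations where $O_1$, $O_2$, $O_3$ respectively apply, and each replaces $P$ by a proper subpatch $P'$ with $|V(P')|<|V(P)|$. It then remains to check that the set $C'$ of edges joining $P'$ to $P_2$ is again a cyclic edge-cut: it is an edge-cut with no repeated edges; $P_2$ still contains a cycle because it was not modified; and $P'$ contains a cycle unless $P$ has shrunk to a single pentagon or hexagon, in which case $C$ was already trivial and the descent terminates. As $|V(P)|$ is a strictly decreasing sequence of nonnegative integers, the procedure halts, necessarily at a trivial cut, which is the assertion of the theorem.

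The main obstacle is the exhaustiveness and bookkeeping of the curvature analysis. One must verify that \emph{no} non-trivial patch with at most six pentagons escapes the classification into the three reducible types; in particular this forces a separate treatment of the ``nanotube'' regime, in which every boundary hexagon satisfies $t(f)=2$ and only the layer-peeling operation $O_1$ makes progress --- a regime that dovetails with the classification of the graphs $G_k$ in Theorem \ref{cyclic5}. One must also confirm that each of $O_1,O_2,O_3$ is well defined, in the sense that $P'$ stays within the class of fullerene patches, that no multiple edges are created, and that the two six-pentagon halves are never merged incorrectly. Finally, pinning down the precise local shapes of $O_1,O_2,O_3$ so that their reverses reconstruct $C$ itself, rather than merely some cyclic edge-cut of the same size, is where essentially all of the remaining case analysis lives.
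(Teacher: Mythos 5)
This statement is quoted from Kardo\v{s} and \v{S}krekovski \cite{kardos2008} and the present paper gives no proof of it, so the only meaningful comparison is with the original source; your descent strategy --- work on the side with at most six pentagons, use the Euler-formula identity $n_2-n_3=6-p$ for patches with pentagonal/hexagonal inner faces to force a degree-one vertex ($O_1$), two adjacent degree-two vertices ($O_2$), or perfect alternation of degrees $2$ and $3$ on the boundary ($O_3$), and let the strictly decreasing vertex count terminate the process at a trivial cut --- is essentially the argument of \cite{kardos2008}. Two small points: your matching of configurations to operations is inconsistent (the annular layer-peeling is $O_3$, while $O_1$ is the removal of a degree-one vertex of the component, and in the final paragraph you attribute the nanotube-regime reduction to $O_1$ rather than $O_3$), and the phrase ``$C$ was already trivial'' should read that the \emph{current} cut in the descent has become trivial; neither affects the soundness of the plan.
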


Here the three operations can be presented as follows (see Fig.  6
for an illustration).

$(O_{1})$ If a component $H$ contains a vertex of degree one, then
using $(O_{1})$ one can modify the $k$-edge-cut $C$ into a
$(k-1)$-edge-cut $C_{1}$.

$(O_{2})$ If a component $H$ contains two adjacent vertices of degree
two, then using $(O_{2})$ one can modify the $k$-edge-cut $C$ into a
$k$-edge-cut $C_{2}$.

$(O_{3})$ If the vertices of the outer faces of $H$ are consecutively
of degree 2 and 3, then using $(O_{3})$ one can modify the
$k$-edge-cut $C$ into a $k$-edge-cut $C_{3}$.

\begin{figure}[h]
\begin{center}
\includegraphics[scale=0.7
]{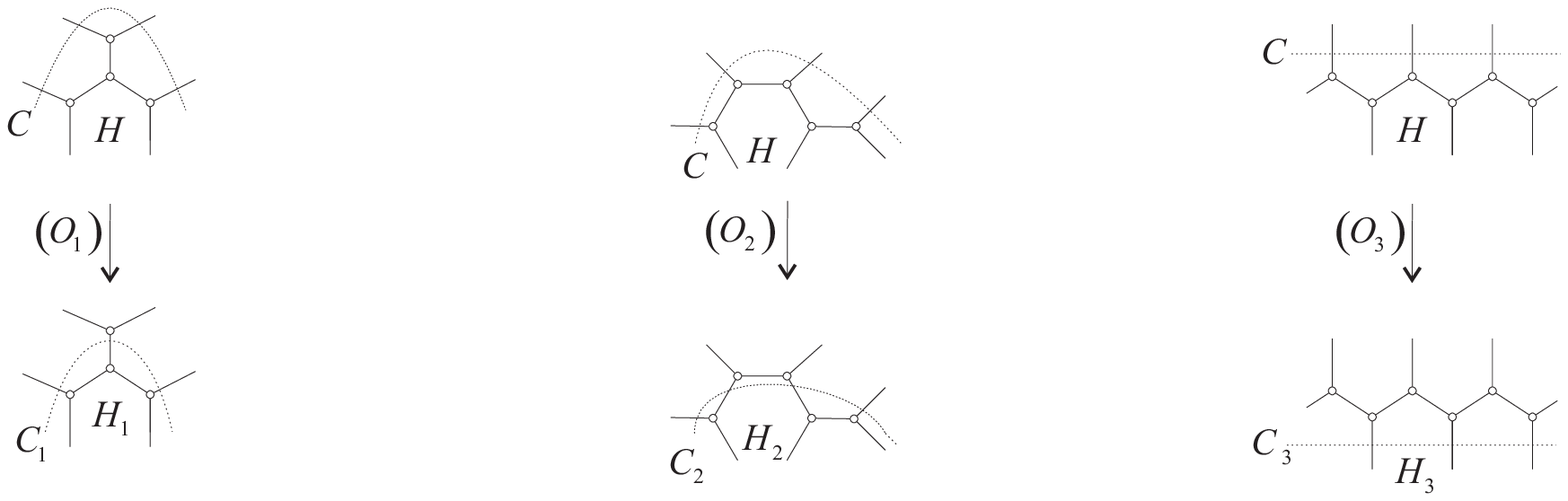}\\{Figure 6.  Three operations $O_{1},O_{2}$ and $O_{3}$.}
\end{center}\label{operations}
\end{figure}

Let $G$ be a subgraph of a fullerene graph $F$. A face $f$ of $F$ is
a {\em neighboring face} of $G$ if $f$ is not a face of $G$ and $f$ has
at least one edge in common with $G$. For two faces $f_{1},f_{2}$ of
a fullerene graph $F$, we always say $f_{1}$ {\em intersects} $f_{2}$ if
$f_{1}$ is a neighboring face of $f_{2}$. The following result is
known.

\begin{Lemma}(\cite{zhang2010}, Lemma 4.2)\label{intersectatoneedge} Let $H$ be a 3-regular plane graph. If $H$ is cyclically 4-edge-connected, then
there are neither three faces which are pairwise adjacent but do not
share a common vertex, nor two faces which share more than one
disjoint edges in $H$.
\end{Lemma}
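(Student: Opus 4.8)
The plan is to show that each of the two forbidden configurations would produce a cyclic edge-cut of $H$ with at most three edges, contradicting $c\lambda(H)\geq 4$. A preliminary observation I would record first: a cyclically $4$-edge-connected cubic graph is $3$-connected — a bridge is a cyclic $1$-edge-cut in any cubic graph, and a $2$-vertex-cut $\{u,v\}$ would split $H-\{u,v\}$ into parts each of which, together with the at most three edges joining it to $\{u,v\}$, still contains a cycle, hence a cyclic $\leq 3$-edge-cut. Consequently every face of $H$ is bounded by a cycle, and (the dual $H^{*}$ being simple) two distinct faces of $H$ meet in $\emptyset$, in a single vertex, or in a single edge with its two ends.

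For the first configuration, suppose $f_{1},f_{2},f_{3}$ are pairwise adjacent with no common vertex. By the preliminary remark each pair shares exactly one edge; put $e_{12}=\partial f_{1}\cap\partial f_{2}$, and similarly $e_{13},e_{23}$. If two of these edges shared a vertex $w$, then $w$ would lie on all three face boundaries, contrary to hypothesis; hence $e_{12},e_{13},e_{23}$ are pairwise vertex-disjoint and, for the same reason, $\overline{f_{1}}\cap\overline{f_{2}}\cap\overline{f_{3}}=\emptyset$. Thus $R:=\overline{f_{1}}\cup\overline{f_{2}}\cup\overline{f_{3}}$ is a union of three closed disks meeting pairwise along three disjoint arcs and with empty triple intersection; it is connected, embedded in the sphere, and has Euler characteristic $3-3=0$, so $R$ is a closed annulus. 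Its two boundary circles $D$ and $D'$ are cycles of $H$, and $e_{12},e_{13},e_{23}$ are exactly the edges of $H$ lying in the interior of $R$; since the complement of $R$ is two open disks, deleting these three edges disconnects $H$, with $D$ on one side and $D'$ on the other. Hence $\{e_{12},e_{13},e_{23}\}$ is a cyclic $3$-edge-cut — a contradiction.

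The second configuration is immediate from $H^{*}$ being simple (two faces cannot share more than one edge at all); alternatively, it is handled by the same annulus argument: if distinct faces $f_{1},f_{2}$ shared two disjoint edges $e_{1},e_{2}$, then $\overline{f_{1}}\cup\overline{f_{2}}$ would be two closed disks identified along the two disjoint arcs $\overline{e_{1}},\overline{e_{2}}$, again a closed annulus, and deleting $e_{1},e_{2}$ — its only interior edges — would give a cyclic $2$-edge-cut. I expect the only real work to be the planar bookkeeping that makes ``a union of closed faces with the stated incidence pattern is a manifold with boundary, hence an annulus'' precise: this is exactly where the hypotheses ``no common vertex'' and ``disjoint edges'', together with $3$-connectedness, are used to exclude degenerate incidences (a spurious shared vertex, or the shared edges abutting one another). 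A reader wishing to avoid surface topology can instead describe the two boundary cycles explicitly as concatenations of sub-arcs of the cycles $\partial f_{i}$ and check by hand that removing the shared edges separates them — routine, if a little tedious.
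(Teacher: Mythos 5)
Your argument is correct, but note that the paper does not prove this lemma at all — it is quoted from \cite{zhang2010} (Lemma 4.2) — so what you have written is a self-contained replacement rather than a variant of anything in this text. Your route is the topological one: glue the closed faces into an annulus and observe that the shared edges are its only interior edges, so deleting them separates the two boundary circles. The checks you flag as ``planar bookkeeping'' are exactly the ones needed and you have them: no two of $e_{12},e_{13},e_{23}$ share a vertex (else the three faces share that vertex), and $\overline{f_i}\cap\overline{f_j}=\overline{e_{ij}}$ exactly (from $3$-connectedness), which is what makes $R$ a manifold with boundary. The standard, shorter route is purely dual: $f_1,f_2,f_3$ form a triangle in $H^{*}$, and the edge set of a cycle in the dual is an edge cut of the primal, so $\{e_{12},e_{13},e_{23}\}$ is a $3$-edge-cut of $H$; in a cubic cyclically $4$-edge-connected graph every $3$-edge-cut is trivial, i.e.\ consists of the three edges at one vertex, and that vertex would lie on all three faces — contradiction. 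The second assertion is likewise a $2$-cycle in $H^{*}$ giving a $2$-edge-cut, which is automatically cyclic in a cubic graph by a degree count. The duality argument buys brevity and avoids the Euler-characteristic computation; your annulus construction buys an explicit identification of the two sides of the cut (the circles $D$ and $D'$), which is essentially a hands-on proof of the cycle--cut duality you would otherwise quote. One small remark: for cubic graphs you do not really need full $3$-connectedness for the face-intersection facts — if two faces of a cubic plane graph share a vertex they share an edge at that vertex, so $3$-edge-connectedness already forces $\overline{f_i}\cap\overline{f_j}$ to be empty or a single closed edge — but your derivation of $3$-connectedness is standard and harmless.
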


\begin{Lemma}\label{atmostoneneighboring} Let $f$ and $f'$ be two disjoint faces of a fullerene graph
$F$ with
$E(f,f')=\emptyset$. Then there is at most one common neighboring
face of both $f$ and $f'$.
\end{Lemma}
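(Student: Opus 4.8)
The plan is to argue by contradiction. Suppose $g_1$ and $g_2$ are two distinct faces, each sharing an edge with $f$ and with $f'$. Let $e_1$ (resp.\ $e_2$) be an edge common to $f$ and $g_1$ (resp.\ $f$ and $g_2$), and let $e_1'$ (resp.\ $e_2'$) be an edge common to $f'$ and $g_1$ (resp.\ $f'$ and $g_2$). Since $F$ is cubic and $3$-connected, two distinct faces meeting at a vertex already share an edge, and a local count at a degree-$3$ vertex shows that no two distinct faces share two \emph{adjacent} edges; together with Lemma~\ref{intersectatoneedge} and the fact that $F$ is cyclically $5$-edge-connected \cite{doslic2003,Qi2008}, this gives that any two distinct faces of $F$ share at most one edge, so $e_1,e_2,e_1',e_2'$ are uniquely determined.

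First I would show that $g_1$ and $g_2$ are vertex-disjoint. If not, they share a vertex and hence an edge; then $f,g_1,g_2$ are pairwise adjacent, so by Lemma~\ref{intersectatoneedge} they have a common vertex $u\in V(f)$, and likewise $f',g_1,g_2$ have a common vertex $u'\in V(f')$, with $u\ne u'$. The three faces around $u$ are $f,g_1,g_2$, so the edge of $g_1\cap g_2$ at $u$ is the unique common edge of $g_1$ and $g_2$; the same holds at $u'$. Hence this common edge joins $u\in V(f)$ to $u'\in V(f')$, contradicting $E(f,f')=\emptyset$. Consequently $e_1\ne e_2$ and $e_1'\ne e_2'$ (otherwise $g_1$ and $g_2$ would share that edge), while $e_1\ne e_1'$, $e_2\ne e_2'$, $e_1\ne e_2'$, $e_2\ne e_1'$ because $V(f)\cap V(f')=\emptyset$; thus $e_1,e_2,e_1',e_2'$ are pairwise distinct. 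The same reasoning shows $e_1,e_2$ are non-adjacent edges of $\partial f$ (a shared vertex would lie on $V(g_1)\cap V(g_2)$), and $e_1',e_2'$ are non-adjacent edges of $\partial f'$.

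Next I would analyse the closed region $U=f\cup g_1\cup f'\cup g_2$. Attaching $g_1$ to $f$ along $e_1$ and then $f'$ along $e_1'$ yields a closed disk, to whose boundary cycle the two disjoint arcs $e_2\subset\partial f$ and $e_2'\subset\partial f'$ belong; attaching the disk $g_2$ along these two disjoint arcs turns $U$ into a closed annulus whose two boundary components $C$ and $C'$ are vertex-disjoint cycles of $F$. One checks that no vertex of $F$ lies in the interior of $U$: such a vertex would be surrounded by three pairwise adjacent faces from $\{f,g_1,f',g_2\}$, but among these four faces the only non-adjacent pairs being $\{f,f'\}$ and $\{g_1,g_2\}$, no three of them are pairwise adjacent. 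Hence the only edges of $F$ drawn in the open annulus $\mathrm{int}(U)$ are exactly $e_1,e_2,e_1',e_2'$, and $F$ splits into these four edges, the cycles $C,C'$, and the two plane subgraphs lying in the closed disks cut off by $C$ and by $C'$.

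Finally, deleting $e_1,e_2,e_1',e_2'$ from $F$ separates it into the subgraph inside the disk bounded by $C$ and the subgraph inside the disk bounded by $C'$ — no remaining edge joins the two parts, since any such edge would have to cross $\mathrm{int}(U)$ — and each part is connected and contains a cycle ($C$ and $C'$ respectively, boundaries of unions of faces being cycles in the $3$-connected graph $F$). Thus $\{e_1,e_2,e_1',e_2'\}$ is a cyclic $4$-edge-cut of $F$, contradicting $c\lambda(F)=5$. I expect the main obstacle to be the topological bookkeeping of the third paragraph — confirming that $U$ is genuinely an annulus, that it has no interior vertices, and that removing the four "seam" edges really disconnects $F$ into two cyclic parts; the rest is a short deduction from Lemma~\ref{intersectatoneedge} and the cyclic $5$-edge-connectivity of fullerene graphs.
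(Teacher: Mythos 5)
Your proof is correct and follows essentially the same route as the paper's: the four edges joining $f$ and $f'$ to the two common neighbouring faces form an edge-cut of size at most four whose two sides each contain a cycle, contradicting $c\lambda(F)=5$. The only real difference is the degenerate case where $g_{1}$ and $g_{2}$ intersect, which you rule out directly from $E(f,f')=\emptyset$ while the paper instead converts the cut into a cyclic $3$-edge-cut; both variants work.
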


\begin{proof} To the contrary, suppose at least two such neighboring
faces exist, say $f_{1}$, $f_{2}$. Then the edge set $C=\{e_{1},e_{2},e_{3},e_{4}\}$ forms an edge cut, where $e_{1}=\partial(f)\cap\partial(f_{1}), e_{2}=\partial(f)\cap\partial(f_{2}), e_{3}=\partial(f')\cap\partial(f_{2}), e_{4}=\partial(f')\cap\partial(f_{1})$. Since $E(f,f')=\emptyset$, $f_{1},f_{2}$ are hexagons. On the other hand, if one component $H$ of $F-C$ contains a vertex $v$ of degree one in $H$, then exactly two edges incident with $v$ belong to the cut $C$, say $e_{1},e_{2}$. Let the third edge incident with $v$ be $e$. Then $C_{1}=C\setminus \{e_{1},e_{2}\}\cup\{e\}$ is a cyclic 3-edge-cut in $F$ since $E(f,f')=\emptyset$ (see Fig.  7(a)), contradicting that
$c\lambda(F)=5$. If both components of $F-C$ are of minimum degree two, then $C$ is a cyclic 4-edge-cut in $F$ (see Fig.  7(b)), again contradicting that
$c\lambda(F)=5$.
\end{proof}

\begin{figure}[h]
\begin{center}
\includegraphics[scale=0.9
]{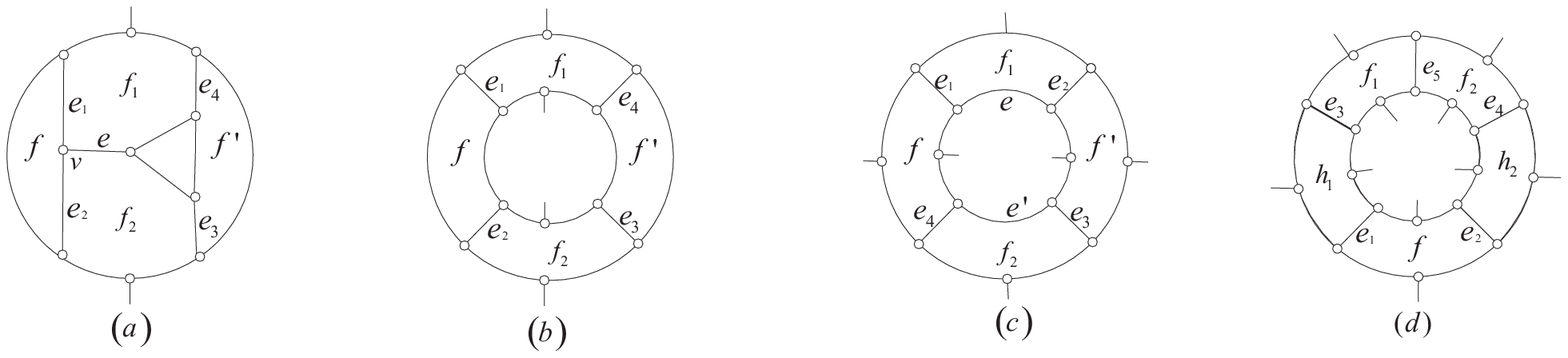}\\{Figure 7.  (a) Cyclic 3-edge-cut
$\{e,e_{3},e_{4}\}$, (b) and (c) two cyclic 4-edge-cuts
$\{e_{1},e_{2},e_{3},e_{4}\}$, and (d)  cyclic 5-edge-cut
$\{e_{1},e_{2},e_{3},e_{4},e_{5}\}$.}
\end{center}\label{nocyclic4}
\end{figure}

\begin{Lemma}\label{lessthan1} Let $f, f'$ be two faces of a fullerene graph
$F$. Then $|\nabla(f) \cap \nabla(f')|\leq 1$.
\end{Lemma}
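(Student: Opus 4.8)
The plan is to reduce this to Lemma~\ref{atmostoneneighboring} by a careful case analysis on whether the two faces $f,f'$ are disjoint, share an edge, or share a vertex. First I would treat the degenerate case where $f=f'$, which is trivial since $\nabla(f)\cap\nabla(f)=\nabla(f)$ is not what we want — rather we should assume $f\neq f'$ throughout, as is implicitly intended. Then the bulk of the argument splits according to $V(f)\cap V(f')$.

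\medskip

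\noindent\emph{Case 1: $V(f)\cap V(f')=\emptyset$ and $E(f,f')=\emptyset$.} An edge $e\in\nabla(f)\cap\nabla(f')$ has one endpoint on $f$ and the other on $f'$, but that would put an edge between $f$ and $f'$, contradicting $E(f,f')=\emptyset$ — so in fact this looks wrong. I need to recall the definition: $\nabla(f)=\nabla(V(f))=E(V(f),\overline{V(f)})$ is the set of edges with exactly one end in $V(f)$. An edge in $\nabla(f)\cap\nabla(f')$ thus has exactly one end in $V(f)$ and exactly one end in $V(f')$. If $V(f)\cap V(f')=\emptyset$ this forces the edge to join $V(f)$ to $V(f')$ directly, i.e. $e\in E(f,f')$; but it could also be an edge of the boundary of a common neighboring face. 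Here is the cleaner way: such an edge $e=uv$ with $u\in V(f)$, $v\in V(f')$. If $E(f,f')\neq\emptyset$ we must argue separately; if $E(f,f')=\emptyset$ then no such edge joins $f$ to $f'$, so the "one end in $V(f)$, one end in $V(f')$" edge is impossible unless some vertex lies in both — contradiction. Wait: $e$ has one end in $V(f)$ and one end not in $V(f)$; the latter end is in $V(f')$. So $e$ genuinely joins a vertex of $f$ to a vertex of $f'$, hence $e\in E(f,f')$. Therefore when $E(f,f')=\emptyset$ and $V(f)\cap V(f')=\emptyset$ we get $\nabla(f)\cap\nabla(f')=\emptyset$, done. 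When $E(f,f')\neq\emptyset$ but $V(f)\cap V(f')=\emptyset$: any two disjoint faces sharing an edge — this is impossible for faces of a fullerene since sharing an edge means sharing its two endpoints. So this sub-case is vacuous. When $V(f)\cap V(f')\neq\emptyset$: by Lemma~\ref{intersectatoneedge} (applicable since $c\lambda(F)=5\geq 4$), two faces of $F$ share at most one edge; and if they share a vertex $v$ without sharing an edge, the two faces meet only at $v$, so no single edge has one end in each.

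\medskip

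\noindent The cleanest organizing principle: an edge $e=uv\in\nabla(f)\cap\nabla(f')$ has $u\in V(f)\setminus V(f')$ forced (since $v\notin V(f)$ and one of $u,v$ is in $V(f')$, it must be $u$)... actually I would argue: the two ends of $e$ are split by $\nabla(f)$ as $\{u\in V(f),\ v\notin V(f)\}$ and split by $\nabla(f')$ as $\{u'\in V(f'),\ v'\notin V(f')\}$. Matching these up gives either (i) $u\in V(f)\cap V(f')$, or (ii) $v\in V(f)\cap V(f')$ — wait, $v\notin V(f)$ — or (iii) $u\in V(f)$, $v\in V(f')$, i.e. $e\in E(f,f')$, or (iv) $u\in V(f)$, $u\in V(f')$. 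So either $V(f)\cap V(f')\neq\emptyset$ or $e\in E(f,f')$. In the first situation, since $F$ is cyclically $5$-edge-connected (hence cyclically $4$-edge-connected), Lemma~\ref{intersectatoneedge} says $f$ and $f'$ share at most one edge; if they share an edge $e_0$, then any edge with one end in $V(f)$ and one in $V(f')$ must itself be $e_0$ (otherwise two faces would share a vertex-disjoint situation creating three pairwise-adjacent faces or a separating $3$- or $4$-cut — here I would invoke the nanotube/edge-cut lemmas or just direct local analysis of the cubic structure around the shared vertices), giving $|\nabla(f)\cap\nabla(f')|\le 1$; if they share only vertices, a short local count using $3$-regularity at the shared vertex bounds the count by $1$. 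In the second situation, if additionally $V(f)\cap V(f')=\emptyset$ then I am precisely in the setting of Lemma~\ref{atmostoneneighboring}? Not quite — Lemma~\ref{atmostoneneighboring} bounds the number of common \emph{neighboring faces}, not the number of connecting edges. I would instead observe that two distinct edges $e,e'\in E(f,f')$ would, together with the arcs of $\partial(f)$ and $\partial(f')$ between their endpoints, bound a region, and the ``inner'' arcs would be short enough to force a small cyclic edge-cut contradicting $c\lambda(F)=5$ — essentially the picture in Fig.~7. I expect \textbf{this last step — ruling out two parallel connecting edges between two edge-disjoint faces — to be the main obstacle}, and the right tool is either a direct small-edge-cut argument (each such pair of edges, if ``close,'' yields a cyclic $3$- or $4$-edge-cut exactly as in the proof of Lemma~\ref{atmostoneneighboring}) or a reduction to Lemma~\ref{atmostoneneighboring} by noting a face squeezed between the two connecting edges would be a common neighbor of $f$ and $f'$, but two such configurations would force two common neighbors plus $E(f,f')\neq\emptyset$ in a way the cyclic connectivity forbids. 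I would write it up by first stating the four-way split of the endpoints of a putative edge in $\nabla(f)\cap\nabla(f')$, then dispatching the ``shared vertex'' cases via Lemma~\ref{intersectatoneedge} and $3$-regularity, and finally dispatching the ``connecting edge'' case via the Fig.~7-style cyclic edge-cut argument.
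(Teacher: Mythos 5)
Your proposal lands on the same argument as the paper: after your case analysis (correctly) reduces everything to edges joining a vertex of $f$ to a vertex of $f'$, two such edges each lie on a common neighboring face of $f$ and $f'$, and the four edges where those neighboring faces meet $f$ and $f'$ yield a cyclic edge-cut of size less than five, contradicting $c\lambda(F)=5$ --- precisely the Fig.~7(c) argument you identify as the decisive tool, and the paper's proof is no more detailed than your sketch at this point. The one slip is the suggestion that a shared boundary edge $e_0$ of adjacent faces could be the surviving element of $\nabla(f)\cap\nabla(f')$: it cannot, since both its ends lie in $V(f)\cap V(f')$, so $e_0$ belongs to neither $\nabla(f)$ nor $\nabla(f')$; this only strengthens your conclusion and does not affect the bound.
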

\begin{proof} Suppose to the contrary that there exist two edges
$e,e'$ of $F$ satisfying $e,e'\in \nabla(f) \cap \nabla(f')$.
Then both $e$ and $e'$ are contained in a neighboring face of $f$
and $f'$, say $f_{1},f_{2}$, respectively. Again a cyclic edge-cut
of size less than five can be gained (see Fig.  7(c)), also a
contradiction.
\end{proof}

\begin{Lemma}\label{equalemptyset} Assume that a fullerene graph $F$ has no non-trivial cyclic 5-egde-cut,  disjoint faces $h_{1}$ and
 $h_{2}$ of  $F$ are not incident and share a common neighboring face $f$. Let $f_{1},f_{2}$ be the other neighboring faces of
 $h_{1},h_{2}$ (respectively) different from $f$ and $V(f_{i})\cap V(f)=\emptyset$ for some $i\in \{1,2\}$. Then $V(f_{1})\cap
 V(f_{2})=\emptyset$.
\end{Lemma}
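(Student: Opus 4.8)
The plan is to argue by contradiction, assuming there is a vertex $v\in V(f_1)\cap V(f_2)$, and to extract from this a small cyclic edge-cut that either contradicts $c\lambda(F)=5$ or forces a non-trivial cyclic $5$-edge-cut, which is excluded by hypothesis. First I would set up notation: write $e_1=\partial(h_1)\cap\partial(f)$, $e_2=\partial(h_2)\cap\partial(f)$ for the edges that $f$ shares with $h_1$ and $h_2$ (these exist and are single edges since, by Lemma~\ref{intersectatoneedge} applied to the cyclically $4$-edge-connected $3$-regular graph $F$, two adjacent faces share exactly one edge and three pairwise adjacent faces share a common vertex; in particular $f$ meets each $h_i$ in one edge). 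Likewise let $e_1'=\partial(h_1)\cap\partial(f_1)$ and $e_2'=\partial(h_2)\cap\partial(f_2)$ be the edges that $h_1,h_2$ share with their ``other'' neighboring faces $f_1,f_2$. Since $h_1,h_2$ are not incident and $f$ is a common neighboring face, $f$ wraps around between them; I would use the cyclic structure of the boundaries of $h_1$ and $h_2$ to see that $f_1$ and $f_2$ are the neighboring faces of $h_1,h_2$ that are ``on the same side'' as $f$, i.e.\ that $e_1'$ is incident to $e_1$ through a vertex of $h_1$ and $e_2'$ is incident to $e_2$ through a vertex of $h_2$.

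The heart of the argument is the following: if $v\in V(f_1)\cap V(f_2)$, then consider the union $G=h_1\cup h_2\cup f\cup\{v\}$ together with the edges joining $v$ to $f_1,f_2$; the boundary $\partial$ of the subgraph $H$ spanned by $V(h_1)\cup V(h_2)\cup V(f)\cup\{v\}$ (or its complement) is a short edge-cut $C$. I would count $|C|$ carefully using $3$-regularity: each face contributes its outward-pointing edges, but the shared edges $e_1,e_2$ (and the identification at $v$) reduce the count substantially. The hypothesis $V(f_i)\cap V(f)=\emptyset$ for some $i$ is what prevents $v$ from already lying on $f$ (which would make the configuration degenerate into one covered by Lemma~\ref{atmostoneneighboring} or Lemma~\ref{lessthan1}), and it guarantees the three faces $h_i,f,f_i$ are in ``general position'' so that the resulting cut is genuinely cyclic: each side contains a cycle (one side contains, say, $h_{3-i}$ or a large outer region, the other contains $f_i$ minus a path). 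A bookkeeping of this kind should yield $|C|\le 5$, and either $|C|\le 4$ — contradicting $c\lambda(F)=5$ outright — or $|C|=5$ with both sides cyclic, i.e.\ a cyclic $5$-edge-cut; one then checks it is non-trivial (it separates two distinct faces $f_1,f_2$ from the rest and, using $V(f_i)\cap V(f)=\emptyset$, neither side is a single face), contradicting the standing assumption that $F$ has no non-trivial cyclic $5$-edge-cut.

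In carrying this out I would, as in the proof of Lemma~\ref{atmostoneneighboring}, split into cases according to whether the smaller side of $C$ has a vertex of degree one (apply a reverse $(O_1)$-type move to shrink $C$ by one) or has minimum degree two (so $C$ is already cyclic of its stated size); this is exactly the mechanism used there to push the bound below $5$ or to land on a forbidden cut. A subtle point to handle is the possibility that $f_1=f_2$: then $f_1$ is a common neighboring face of $h_1$ and $h_2$ in addition to $f$, and since $h_1,h_2$ are not incident, Lemma~\ref{atmostoneneighboring} already gives a contradiction, so that case closes immediately. I expect the main obstacle to be the precise edge-count of $C$ and verifying cyclicity of both sides in the remaining case: one has to rule out that $f_1$ (or the complementary region) degenerates to something acyclic, and this is where the pentagon/hexagon face-size constraints and Lemma~\ref{lessthan1} (bounding $|\nabla(f)\cap\nabla(f')|\le 1$, so faces cannot over-share edges) get used to keep the configuration honest. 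Once the cut is shown to have size at most $5$ and to be cyclic and non-trivial, the contradiction is immediate and the lemma follows.
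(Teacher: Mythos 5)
Your overall strategy --- assume $V(f_1)\cap V(f_2)\neq\emptyset$ and derive a contradiction from a cyclic edge-cut of size at most five, using $c\lambda(F)=5$ and the absence of non-trivial cyclic $5$-edge-cuts --- is the same as the paper's, and your remark that the case $f_1=f_2$ is disposed of by Lemma~\ref{atmostoneneighboring} is sound. But the concrete cut you propose does not work. The boundary $\nabla(V(h_1)\cup V(h_2)\cup V(f)\cup\{v\})$ is not small: the three hexagons $h_1,h_2,f$ span $14$ vertices of degree $3$ with $16$ internal edges (they pairwise share only the two edges $e_1=\partial(f)\cap\partial(h_1)$ and $e_2=\partial(f)\cap\partial(h_2)$), so this boundary already has $42-2\cdot 16=10$ edges before $v$ is attached, and attaching $v$ changes the count by at most $3$; no bookkeeping brings it down to $5$. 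The idea you are missing is to upgrade the common \emph{vertex} $v$ of $f_1$ and $f_2$ to a common \emph{edge}: by $3$-regularity two faces meeting at a vertex must share one of the three edges there, and Lemma~\ref{intersectatoneedge} then gives $E(f_1)\cap E(f_2)=\{e_5\}$ exactly. With $e_3=\partial(h_1)\cap\partial(f_1)$ and $e_4=\partial(h_2)\cap\partial(f_2)$, the five faces $h_1,f,h_2,f_2,f_1$ form a closed ring in which consecutive faces share exactly one edge (here Lemma~\ref{atmostoneneighboring} is needed to ensure $f_1$ is not also a neighboring face of $h_2$, nor $f_2$ of $h_1$, and the hypothesis $V(f_i)\cap V(f)=\emptyset$ keeps the ring non-degenerate), and the correct cut is $C=\{e_1,\dots,e_5\}$, the set of these five pairwise-shared edges, which separates the interior of the ring from its exterior --- not the vertex-boundary of the union of three of the faces.

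The endgame also needs adjusting. One cannot simply ``check that $C$ is non-trivial'': a priori it could be the trivial cut $\nabla(p)$ around a pentagonal face $p$. The actual contradiction is that if $C=\nabla(p)$, then $p$ is adjacent to all five faces of the ring in the cyclic order $h_1,f,h_2,f_2,f_1$, so the edge of $p$ lying on $f$ joins a vertex of $h_1$ to a vertex of $h_2$; hence $E(h_1,h_2)\neq\emptyset$, i.e.\ $h_1$ and $h_2$ are incident, contradicting the hypothesis. As written, your proposal neither produces a cut of size five nor supplies this final step, so it has a genuine gap at its core even though the guiding intuition is correct.
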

\begin{proof} Let $E(h_{1})\cap E(f)=\{e_{1}\}$,$E(h_{2})\cap
E(f)=\{e_{2}\}$. Since $h_{1}$ and $h_{2}$ are not incident, $f$ is
a hexagon. By Lemma \ref{intersectatoneedge} we can know that
$f_{1}$ intersects $h_{1}$ at exactly one edge, say $E(h_{1})\cap
E(f_{1})=\{e_{3}\}$. Similarly, $E(h_{2})\cap E(f_{2})=\{e_{4}\}$.
Moreover, $f_{2}$ ($f_{1}$) can not intersect $h_{1}$ ($h_{2}$) by
Lemma \ref{atmostoneneighboring}. Suppose $V(f_{1})\cap
V(f_{2})\neq\emptyset$. Then again by Lemma \ref{intersectatoneedge}
we can set $\{e_{5}\}=E(f_{1})\cap E(f_{2})$. Since $V(f_{i})\cap
V(f)=\emptyset$ for $i\in \{1,2\}$, $C=\{e_{1},\cdots,e_{5}\}$ forms
a cyclic 5-edge-cut (see Fig.  7(d)). Thus it is a trivial one,
contradicting that $h_{1}$ and $h_{2}$ are not incident.\end{proof}

A {\em fragment} $B$ of a fullerene graph $F$ is a subgraph of $F$
consisting of a cycle together with its interior. A {\em pentagonal
fragment} is a fragment with only pentagonal inner faces. For a
fragment $B$, all 2-degree vertices of $B$ lie on its boundary. A path $P$ on $\partial(B)$ connecting two 2-degree vertices is {\em degree-saturated} if $P$ contains no 2-degree vertices of $B$ as intermediate vertices.

\begin{Lemma}(\cite{dong2009}, Lemma 2.2)\label{aforest} Let $B$ be a fragment of a fullerene
graph $F$ and $W$ the vertex set consisting of all 2-degree vertices
on $\partial (B)$. If $0< |W| \leq 4$, then $T= F-(V(B)\setminus W)$ is a
forest and

(1) $T$ is $K_{2}$ if $|W|=2$;

(2) $T$ is $K_{1,3}$ if $|W|=3$;

(3) $T$ is either the union of two $K_{2}^{'}s$, or a 3-length path, or
$T_{0}$ as shown in Fig.  8 if $|W|=4$.
\end{Lemma}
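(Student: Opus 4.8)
The plan is to read off everything from the single edge-cut $\bigtriangledown(V(B))$, together with the facts that $c\lambda(F)=5$ and that $F$ has girth $5$ (in particular no triangle and no quadrilateral, which is itself a standard consequence of $c\lambda(F)=5$ and $|V(F)|\ge 20$). The whole lemma — including the assertion that $T$ is a forest — will drop out of a short case analysis on $|W|$; I will not try to prove ``$T$ is a forest'' separately.

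First I would note that every vertex of $B$ outside $W$ has all three of its edges inside $B$, so $\bigtriangledown(V(B))$ consists of exactly the $|W|$ edges leaving the vertices of $W$, one from each vertex; hence $|\bigtriangledown(V(B))|=|W|\le 4$. Since $V(B)$ contains the cycle $\partial(B)$ and $|W|<5=c\lambda(F)$, the cut $\bigtriangledown(V(B))$ is not cyclic, so $F-V(B)$ contains no cycle and is therefore a forest (possibly empty). Put $D=\overline{V(B)}$ and observe that $T=F-(V(B)\setminus W)$ is exactly the subgraph of $F$ induced by $D\cup W$; thus $T$ is the forest $F[D]$ with the vertices of $W$ and their incident edges added on. Each $w\in W$ is incident with exactly one edge not in $B$, and that edge cannot end at a $3$-degree vertex of $B$ (it would acquire degree $4$), so it goes either into $D$ or to another vertex of $W$. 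Counting edge-ends at $D$ gives $3|D|=2\,e(F[D])+|W|$, and since a nonempty forest on $|D|$ vertices has at most $|D|-1$ edges, this forces $|D|\le |W|-2$.

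Then I would treat the three cases. If $|W|=2$ then $D=\emptyset$, the two edges leaving $W$ coincide in a single edge $w_1w_2$ (which is not in $B$, so not a parallel edge), and $T$, having only the two vertices of $W$, equals $K_2$. If $|W|=3$ then $|D|\le 1$, and $D\ne\emptyset$ since three edges cannot pair up inside a set of odd order; so $D=\{x\}$, all three edges leaving $W$ meet $x$ (hence meet three distinct vertices of $W$, as $F$ is simple), and no two vertices of $W$ are adjacent on $\partial(B)$ — otherwise they would form a triangle with $x$ — so $T=K_{1,3}$. If $|W|=4$ then $|D|\in\{0,2\}$. When $D=\emptyset$ the four edges leaving $W$ form a perfect matching of $W$, drawn as two non-crossing chords in the disk bounded by $\partial(B)$; the only further possible edges of $T$ are edges of $\partial(B)$ joining two vertices of $W$, and triangle-freeness together with the absence of quadrilateral faces rules out all but at most one of them (two such edges would close the chord pair into a $3$- or $4$-cycle bounding a single face of $F$), giving $T=P_4$ if one occurs and $T$ equal to two disjoint $K_2$'s otherwise. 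When $D=\{x,y\}$, the edge-sum forces $xy\in E(F)$, then two vertices of $W$ are joined to $x$ and the other two to $y$, and the same triangle/quadrilateral argument forbids any edge of $\partial(B)$ inside $W$, so $T$ is the tree $T_0$ of Fig. 8 — the two adjacent $3$-degree vertices $x,y$ each carrying two pendant vertices of $W$.

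I expect the only genuine obstacle to be the $|W|=4$ analysis: showing that the ``extra'' $\partial(B)$-edges within $W$, which would create a cycle in $T$, cannot occur. The clean argument is topological: such a cycle lies in the closed disk bounded by $\partial(B)$ on the side away from $B$, and that disk contains no vertices besides those of $D$ and of $\partial(B)$, so the cycle would enclose a single face of $F$ of length $3$ or $4$, impossible in a fullerene graph. Everything else is bookkeeping with $\bigtriangledown(V(B))$ and the degree-sum identity.
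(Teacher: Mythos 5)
Your proof is correct. Note that the paper itself offers no proof of this lemma --- it is quoted verbatim from \cite{dong2009} (Lemma 2.2) --- so there is no in-paper argument to compare against; your write-up is a valid self-contained derivation from $c\lambda(F)=5$ and girth $5$, and the overall strategy (bound $|\bigtriangledown(V(B))|$, deduce $F-V(B)$ is a forest, count edge-ends to pin down $|D|$, then use girth to exclude extra $\partial(B)$-edges inside $W$) is exactly the natural one.

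One small imprecision worth fixing: it is not true that $|\bigtriangledown(V(B))|=|W|$ in general, since an edge of $F\setminus E(B)$ joining two vertices of $W$ has both ends in $V(B)$ and hence does not lie in the cut (indeed in your own $|W|=2$ case the cut is empty). For the same reason the identity $3|D|=2e(F[D])+|W|$ should read $3|D|=2e(F[D])+|E(D,W)|$ with $|E(D,W)|\le|W|$. Neither slip affects the argument, because you only ever use the upper bounds $|\bigtriangledown(V(B))|\le|W|\le 4<c\lambda(F)$ and $|D|\le|W|-2$, and in the cases where $D\neq\emptyset$ the exact value of $|E(D,W)|$ is recovered afterwards from the degrees of the vertices of $D$. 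The rest --- the parity argument forcing $|D|$ odd/even, the non-crossing matching when $|W|=4$ and $D=\emptyset$, and the exclusion of $3$- and $4$-cycles (which need only girth $\ge 5$, not that the short cycle bounds a face) --- is sound.
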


\begin{figure}[h]
\begin{center}
\includegraphics[scale=0.7
]{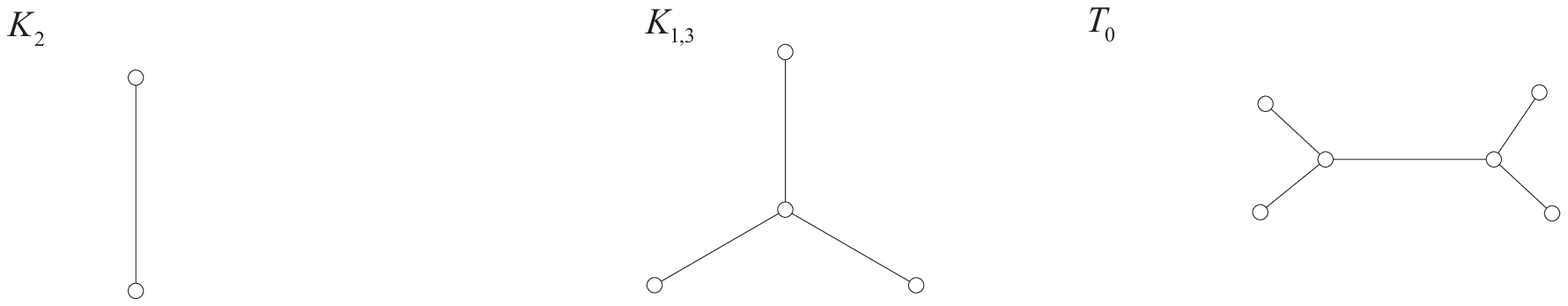}\\{Figure 8.  Trees: $K_{2},K_{1,3}$ and $T_{0}$.}
\end{center}\label{4}
\end{figure}

\section{Proof of Theorem \ref{lr}}

Let $F$ be a fullerene graph without $L$ or $R$ that is different from anyone of graphs in Fig.  2. To the  contrary, suppose $F$ is not 2-resonant. That is,  there exist two disjoint hexagons $h_{1},h_{2}$ such that $F-V(h_{1} \cup
h_{2})$ does not have a perfect matching. Then Theorem
\ref{matchable} guarantees the existence of a vertex set $A$ of
$F-V(h_{1} \cup h_{2})$ such that every component of $F-(V(h_{1}
\cup h_{2})\cup A)$ is factor-critical and the number of these
factor-critical components is more than $|A|$. We make the following notations.

$H=V(h_{1} \cup h_{2})$,

$\mathcal{D}$:  the collection of
factor-critical components of $F-H-A$, and

 $\mathcal{D}^{*}$: the collection of
non-trivial factor-critical components of  $F-H-A$.

For convenience, let $D$ and $D^*$ denote respectively the union of vertex-sets of components in $\mathcal{D}$ and $\mathcal{D}^{*}$,
and $D_{0}=D-D^{*}$; let $E(\cal D^*)$ denote the union of edge-sets of components in $\mathcal{D}^{*}$. Then $D_0$ is an independent set of $F$.
 In Fig. 9 we divide $V(F)$ into $H,A,D^{*},D_{0}$.

\begin{figure}[h]
\begin{center}
\includegraphics[scale=0.7
]{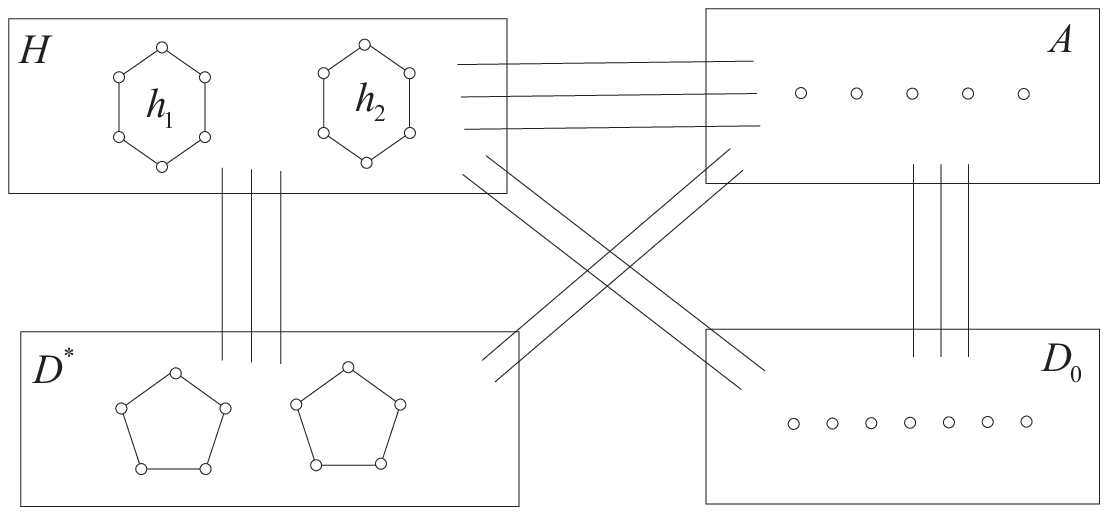}\\{Figure 9.  The partition of $V(F)$ into $H,A,D^{*}$
and $D_{0}$.}
\end{center}
\end{figure}

In what follows,
we will show that the components in $\mathcal{D}$ are singletons. That is, $D^*=\emptyset$, and
 the vertices of $F$ can be reclassified  into $H, A$ and $D_{0}$. Finally,
by means of the structures of the neighboring faces of $h_{1},h_{2}$
and the fact that $F$ contains no subgraph $L$ or $R$ we can
construct the fullerene graphs satisfying the conditions. In our
constructing process we get the contradictions.

Since $|\mathcal{D}|$ and $|A|$ have the same parity and
$|\mathcal{D}|> |A|$, we have

\hspace{4cm} $|\mathcal{D}|\geq |A|+2$. \hfill(1)

Furthermore, $A\cup H$ sends out exactly $|\nabla(A\cup H)|$ edges,
and

\hspace{1cm} $|\nabla(A\cup
H)|=|\nabla(H)|+3|A|-2|E(A,H)|-2|E(A,A)|$. \hfill(2)

Although there is no even components in $\mathcal{D}$,
here we still use the notation system in  the proof of Kaiser's \cite{kaiser2010}.
Let  $$s(\mathcal{D})=\sum_{F^{*}\in\mathcal{D}}\frac{|\nabla(F^*)|-3}{2},$$ where each term is non-negative since $F$ is 3-connected.
Then $D$ sends out precisely $\nabla(D)$ edges, and

\hspace{20mm}$|\nabla(D)|$=$\sum_{F^{*}\in\mathcal{D}}|\nabla(F^*)|$=$3|\mathcal{D}|+2s(\mathcal{D})$
\hfill(3)

As $|\nabla(A\cup H)|$=$|\nabla(D)|$, (1), (2) and (3) imply that

\hspace{20mm}$|E(A,H)|+|E(A,A)|+s(\mathcal{D})\leq\frac{1}{2}|\nabla(H)|-3$.
\hfill(4)

 $|\nabla(H)|$ equals 12 or 10 by Lemma \ref{lessthan1}. From (4)
we have

\hspace{1mm}$|E(A,H)|+|E(A,A)|+s(\mathcal{D})\leq3$ if
$h_{1}$ and $h_{2}$ are not incident, and\hfill(5)

\hspace{1mm}$|E(A,H)|+|E(A,A)|+s(\mathcal{D})\leq2$ if
$h_{1}$ and $h_{2}$ are incident.\hfill(6)

Let $\mathcal{P}$ be the set of pentagons of $F$. Then
$|\mathcal{P}|=12$. If $X\subseteq V(F)$ and $Y\subseteq E(F)$, let
$p(X)=|\{P\in \mathcal{P}|V(P)\cap X\neq\emptyset\}|$ and
$p(Y)=|\{P\in \mathcal{P}|E(P)\cap Y\neq\emptyset\}|$. Observe that
 $F-H-D^{*}-E(A,A)$ is a bipartite graph with bipartition $D_0\cup A$. That is,

\emph{all twelve pentagons of $F$ must contain a vertex in $H\cup
D^{*}$ or an edge in $E(A,A)$.}\hfill(*)

In particular, $p(H)+p(E(A,A))+p(D^{*})\geq12$  \hfill(7)

Since $F$ contains no $L,R$ as subgraphs, $p(V(h_{1}))\leq4$,
$p(V(h_{2}))\leq4$, and $p(V(F^*))\leq3$ if $F^{*}\in \cal D^*$ is a
pentagon. \hfill(8)

\begin{Lemma}\label{trivial}$F$ has no non-trivial cyclic 5-edge cuts.
\end {Lemma}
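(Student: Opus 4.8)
The plan is to assume, for contradiction, that $F$ has a non-trivial cyclic $5$-edge-cut, and then invoke Theorem~\ref{cyclic5} to conclude that $F\cong G_k$ for some $k\geq 1$. The strategy is then to show that every $G_k$ either is one of the eleven excluded graphs in Fig.~2, or contains the subgraph $L$ or $R$, or else is in fact $2$-resonant — in every case contradicting our standing hypothesis that $F$ is a non-$2$-resonant fullerene without $L$ or $R$ that is distinct from the graphs in Fig.~2. So first I would recall the explicit structure of $G_k$: two caps, each formed of six pentagons meeting around a common degree-structure, joined by $k$ concentric layers of hexagons. The key combinatorial point is how the six pentagons sit inside each cap: one inspects the cap and observes that it necessarily contains a cluster of pentagons realizing the configuration $L$ (four mutually-surrounding pentagons) or $R$ (three pentagons) — unless $k$ is so small that the two caps interact and the pentagon arrangement degenerates.

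Concretely, I would split into cases on $k$. For $k$ large enough (say $k\geq 2$ or $k\geq 3$, whatever the cap geometry forces), each cap is ``pure'' in the sense that its six pentagons are confined to the cap and are not separated by the hexagonal layers; a direct look at the unique way six pentagons can tile such a cap exhibits either $L$ or $R$ among them, so $F$ would contain $L$ or $R$, contradiction. For the remaining small values of $k$ — the finitely many graphs $G_1, G_2, \dots$ up to the threshold — I would simply check each one by hand: either it already appears in Fig.~2 (these nanotube-type fullerenes with two six-pentagon caps are natural candidates for that list), or it contains $L$ or $R$, or one verifies directly that any two disjoint hexagons extend to a resonant pattern (using, e.g., the symmetry of $G_k$ and the nanotube layer structure, where alternating matchings are easy to write down explicitly). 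Either way the contradiction is reached, proving $F$ has no non-trivial cyclic $5$-edge-cut.

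The main obstacle I anticipate is the case analysis on the caps: one must argue rigorously that \emph{every} admissible placement of six pentagons in a cap of $G_k$ produces an $L$ or an $R$, which requires a careful description of the cap as a patch and a small enumeration of how pentagonal faces can be adjacent there. A secondary, more routine obstacle is handling the borderline small-$k$ graphs where the two caps are close enough that pentagons from opposite caps are adjacent — here the constraints from excluding $L$ and $R$ plus excluding the eleven graphs of Fig.~2 must be shown to leave no room. I would lean on Lemmas~\ref{atmostoneneighboring} and~\ref{lessthan1} and on the fact that $c\lambda(F)=5$ to control adjacencies of faces, and on the displayed inequalities (5)--(8) together with (8)'s bound $p(V(h_i))\leq 4$ to limit how pentagons can cluster, so that the presence of six pentagons in a single cap is forced to create the forbidden $L$ or $R$.
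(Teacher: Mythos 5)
Your proposal takes the same route as the paper: assume a non-trivial cyclic $5$-edge-cut exists, invoke Theorem~\ref{cyclic5} to conclude $F\cong G_k$, and derive a contradiction from the forbidden substructures. The paper's proof is, however, a one-liner, because the cap of $G_k$ is a \emph{fixed} patch of six pentagons (one pentagon surrounded by five), identical for every $k\geq 1$, and it always contains $R$; so the case analysis on $k$, the enumeration of ``admissible placements'' of pentagons in the cap, and the fallbacks to Fig.~2 membership or direct $2$-resonance verification that you anticipate as obstacles are all unnecessary.
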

\begin{proof}
 For a cyclic 5-edge-cut  of $F$, it must be a
 {\em trivial} one; otherwise, $F$ would be isomorphic to $G_{k}$
for some integer $k\geq1$ by Theorem \ref{cyclic5}, which has the subgraph $R$, a contradiction.
\end{proof}

By  Observation \ref{2connected} and Lemma \ref{trivial},  we have

\begin{pro}\label{5edgecut} A  component $F^{*}$ in  $\cal D$ with $|\nabla(F^{*})|=5$ is a pentagon.
\end{pro}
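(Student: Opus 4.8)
The plan is to show that a factor-critical component $F^{*}\in\mathcal{D}$ with $|\nabla(F^{*})|=5$ must be a single pentagon, using the fact that $F$ has cyclical edge-connectivity $5$ together with the just-established Lemma~\ref{trivial}. First I would observe that $F^{*}$ cannot be trivial: a singleton vertex has degree $3$ in $F$, so $|\nabla(F^{*})|=3\neq 5$. Hence by Observation~\ref{2connected}, $F^{*}$ is a non-trivial factor-critical subgraph of $F$ and is therefore $2$-connected; in particular $F^{*}$ contains a cycle. Next I would examine the complement: the edge set $\nabla(F^{*})$ has size $5$ and separates $F^{*}$ from the rest of $F$. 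I would argue that the other side $F-V(F^{*})$ also contains a cycle — this is where one uses that $F$ has $n\geq 20$ vertices while a component bounded by only $5$ edges is small (it lies ``inside'' a cycle of length at most $5$ in the planar embedding, so it can contain at most a pentagon's worth of faces), so $F-V(F^{*})$ retains almost all of $F$ and certainly contains a cycle. Therefore $\nabla(F^{*})$ is a \emph{cyclic} $5$-edge-cut of $F$.

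Once $\nabla(F^{*})$ is known to be a cyclic $5$-edge-cut, Lemma~\ref{trivial} forces it to be \emph{trivial}, i.e. it is the set of five edges pointing outward from a pentagonal face of $F$. I would then conclude that $F^{*}$, being the component on the ``inside'' of this trivial cut, is exactly that pentagon: the trivial cyclic $5$-edge-cut around a pentagon $P$ separates $V(P)$ (five vertices inducing the $5$-cycle $P$) from everything else, and since the other side must also contain a cycle, the small side is precisely the pentagon $P$ with no additional vertices. This gives $F^{*}=P$, a pentagon, as claimed.

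The main obstacle I anticipate is the step asserting that both sides of the cut $\nabla(F^{*})$ contain cycles, i.e. that $\nabla(F^{*})$ is genuinely cyclic rather than merely an ordinary $5$-edge-cut. The ``inside'' side $F^{*}$ is handled cleanly by $2$-connectedness via Observation~\ref{2connected}. For the ``outside'' one needs a counting or planarity argument: a connected subgraph joined to the rest of the graph by only $5$ edges, all of whose vertices have degree $3$ in $F$ except for the $5$ incident to the cut edges, is forced (in a cubic plane graph of girth $5$) to be bounded by a closed walk of length at most $5$, hence has at most $5$ vertices and $5$ edges, leaving the complement with $n-5\geq 15$ vertices and plenty of cycles. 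I would spell this out by noting that if $|V(F^{*})|\leq 5$ then since $F^{*}$ is $2$-connected with at most $5$ vertices and girth at least $5$, it is exactly a $5$-cycle, and in all cases $F-V(F^{*})$ contains a cycle. With that in hand, the rest is immediate from Theorem~\ref{cyclic5} and Lemma~\ref{trivial}.
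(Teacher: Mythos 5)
Your overall route is the same as the paper's (the paper gives no explicit proof, simply citing Observation~\ref{2connected} and Lemma~\ref{trivial}): a component with $|\nabla(F^{*})|=5$ cannot be a singleton, hence is $2$-connected and contains a cycle; $\nabla(F^{*})$ is therefore a cyclic $5$-edge-cut, which by Lemma~\ref{trivial} must be trivial, so $F^{*}$ is the pentagonal side of a trivial cut. That skeleton is exactly right.

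However, the justification you give for the one step you correctly identify as the crux --- that the \emph{outside} of the cut contains a cycle --- does not work as stated. You argue that a component bounded by only five edges ``lies inside a cycle of length at most $5$ in the planar embedding'' and hence has at most five vertices. That claim is false before the cut is known to be trivial: the nanotubes $G_{k}$ of Theorem~\ref{cyclic5} have (non-trivial) cyclic $5$-edge-cuts in which \emph{both} sides are arbitrarily large, so a $5$-edge-cut component of a cubic plane graph need not be small. Using ``the component is small, hence the cut is cyclic, hence the cut is trivial, hence the component is a pentagon'' is circular, since the smallness of the component is precisely what triviality of the cut delivers at the end. The gap is easily repaired, and without any planarity or counting: $F^{*}$ is a component of $F-H-A$, so $F-V(F^{*})$ contains $V(h_{1})\cup V(h_{2})$ together with the edges of the hexagons $h_{1},h_{2}$, and hence certainly contains a cycle. (The same observation settles your last, lightly justified step of deciding which side of the trivial cut $F^{*}$ is: the side $F-V(F^{*})$ contains two hexagons and so cannot be the pentagon, forcing $F^{*}$ to be the pentagonal side.) With that substitution your argument is correct and coincides with the paper's intended one.
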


For convenience, in some of the following  figures, the
black vertices and the crossed vertices always represent the
vertices belonging to $A$ and $D_{0}$ respectively,   the
non-trivial factor-critical components  are drawn with black lines
and the grey hexagons refer to $h_{1}$ and $h_{2}$.

For a face $f$ of $F$, we call an edge $e$ on $\partial(f)$ a {\em
contributing edge} if it belongs to $E(A,H)$,  $E(A,A)$,
$E(\mathcal{D}^{*})$, or $E(h_{1},h_{2})$. More precisely,
edges in $E(A,H)$, $E(A,A)$, $E(\mathcal{D}^{*})$,
$E(h_{1},h_{2})$ are sometimes called the $E(A,H)$, $E(A,A)$,
$E(\mathcal{D}^{*})$, $E(h_{1},h_{2})$ edges, respectively. Lemma \ref{lessthan1} implies $|E(h_{1},h_{2})|\leq1$. By
 Ineqs. (5) and (6), $|E(A,H)|\leq3$, $|E(A,A)|\leq3$,
 and $s(\mathcal{D})\leq3$. The latter implies
$|\nabla(F^{*})| \leq9$ for each $F^{*}\in \cal D$.

 For $F^*\in \cal D^*$,  $F^*$ is a 2-connected factor-critical graph that is a subgraph of $F$.  If $|\nabla(F^{*})| \leq 7$, $F^*$ has exactly one face that is not a face of $F$ by Lemma \ref{aforest}. Hence $F^*$ can be viewed as a fragment of $F$. If $|\nabla(F^{*})| =9$, by the similar reason together with $c\lambda(F)=5$ we have that $F^*$ has at most two faces that are not  faces of $F$ and both $h_1$ and $h_2$ lie in the same  such face of $F^*$. So we make a convention: both $h_1$ and $h_2$ lie in the exterior face of $F^*$.

We now give some
characterizations about the faces in $F$.

\begin{Lemma}\label{intersectingboth} For $j\in \{1,2\}$, if a
neighboring face $f$ of $h_{j}$ includes no contributing edges, then
$f$ is either a pentagon with the boundary $HHD_{0}AD_{0}$ (which
means the vertices on $\partial(f)$ are consecutively in
$H,H,D_{0},A,D_{0}$. The following notations have the same sense) or
a hexagon with the boundary
$HHD_{0}HHD_{0}$ which is  adjacent to both $h_{1}$ and $h_{2}$.
\end{Lemma}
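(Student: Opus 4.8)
The plan is to walk around the boundary $\partial(f)=v_1v_2\cdots v_k$ with $k\in\{5,6\}$, where by symmetry we take $j=1$ and choose $v_1v_2\in E(h_1)$ to be an edge that $f$ shares with $h_1$. Two standing facts will be used throughout. First, two distinct faces of $F$ share at most one edge: a shared pair of adjacent edges is impossible because $F$ is cubic, and a shared pair of disjoint edges contradicts Lemma~\ref{intersectatoneedge} since $c\lambda(F)=5$. Second, since $F$ is cubic and has girth $5$, no face of $F$ has a chord.

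First I would pin down the two neighbours $v_3$ and $v_k$ of the shared edge. As $f$ carries no contributing edge, the edge $v_2v_3$ is not an $E(A,H)$-edge, so $v_3\notin A$; it is not an $E(h_1,h_2)$-edge, so $v_3\notin V(h_2)$; and $v_3\notin V(h_1)$, since otherwise $v_2v_3$ is a chord of $h_1$ (impossible) or an edge of $h_1$, in which case $f$ and $h_1$ would share two edges. This leaves $v_3\in D^*\cup D_0$. If $v_3$ lay in a component $F^*\in\mathcal{D}^*$, then $F^*$ is $2$-connected by Observation~\ref{2connected}, so $v_3$ has at least two neighbours in $F^*$; since $v_2\notin V(F^*)$, the remaining edge $v_3v_4$ at $v_3$ then lies in $E(\mathcal{D}^*)$, a contributing edge on $\partial(f)$ — contradiction. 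Hence $v_3\in D_0$, and symmetrically $v_k\in D_0$. Since $\{v_3\}$ is then a component of $F-H-A$, all three neighbours of $v_3$ lie in $H\cup A$; in particular $v_4\in H\cup A$.

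Next I would branch on the location of $v_4$. If $v_4\in V(h_1)$: the edge at $v_4$ not lying in $E(h_1)$ is $v_3v_4$ (as $v_3\notin V(h_1)$), so the other edge $v_4v_5$ of $\partial(f)$ at $v_4$ lies in $E(h_1)$, making $f$ and $h_1$ share the two edges $v_1v_2$ and $v_4v_5$ — impossible. If $v_4\in V(h_2)$: the same reasoning gives $v_4v_5\in E(h_2)$ and $v_5\in V(h_2)$; then $k\neq 5$, for otherwise $v_5v_1$ would be an $E(h_1,h_2)$-edge, so $k=6$ and $v_6=v_k\in D_0$, yielding the hexagon with boundary $HHD_0HHD_0$ that is adjacent to both $h_1$ and $h_2$. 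If $v_4\in A$: then $v_5\notin H$ (no $E(A,H)$-edge), $v_5\notin A$ (no $E(A,A)$-edge), and $v_5\notin D^*$ (the $2$-connectivity argument again forces $v_5v_6\in E(\mathcal{D}^*)$), so $v_5\in D_0$; since $v_k\in D_0$ and $D_0$ is independent, $k=6$ is impossible (it would make $v_5v_6$ an edge inside $D_0$), so $k=5$ and $f$ is the pentagon with boundary $HHD_0AD_0$.

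The deductions locating each $v_i$ inside the partition $H,A,D^*,D_0$ are one-line consequences of the definition of contributing edge and cost nothing. The point needing real care — and where a reader will want the details spelled out — is the exclusion of $D^*$-vertices along $\partial(f)$ via Observation~\ref{2connected}, together with the short length bookkeeping that forces $|\partial(f)|=5$ in the $A$-branch and $|\partial(f)|=6$ in the $h_2$-branch; pinning the face length down is exactly what singles out the two asserted boundary patterns.
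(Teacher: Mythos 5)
Your proof is correct. The paper gives no argument of its own here---it simply defers to Lemma 9 of \cite{kaiser2010}---and your boundary walk, which at each step of $\partial(f)$ rules out $A$, $V(h_1)$, $V(h_2)$ (via Lemma \ref{intersectatoneedge} and girth) and $D^{*}$ (via Observation \ref{2connected}) before branching on the vertex after the first $D_0$-vertex, is exactly the standard argument that citation stands for, with the length bookkeeping correctly forcing the pentagon $HHD_0AD_0$ in the $A$-branch and the hexagon $HHD_0HHD_0$ in the $h_{3-j}$-branch.
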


The lemma  can be proved in  the same way as Lemma 9 in \cite{kaiser2010}.

\begin{cor}\label{atleastoneedge} At least one neighboring face of
$h_{j}$ contains a contributing edge for $j\in \{1,2\}$.
\end{cor}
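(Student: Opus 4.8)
The plan is to argue by contradiction. Fix $j\in\{1,2\}$ and suppose that no neighboring face of $h_j$ contains a contributing edge. Since $h_j$ is a hexagonal face of the cyclically $5$-edge-connected cubic graph $F$, Lemma~\ref{intersectatoneedge} tells us that no face of $F$ shares two edges with $h_j$ (two disjoint such edges are forbidden outright, and two adjacent ones would have to bound $h_j$ itself at their common vertex), so $h_j$ has exactly six pairwise distinct neighboring faces $f_1,\dots,f_6$, one along each edge of $h_j$.

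First I would dispose of the case in which $h_1$ and $h_2$ are incident. If $xy\in E(h_1,h_2)$ with, say, $x\in V(h_j)$, then $xy$ is not an edge of $h_j$, and one of the two faces of $F$ incident with $xy$ shares with $h_j$ one of the two edges of $h_j$ at $x$; that face is therefore a neighboring face of $h_j$ containing the $E(h_1,h_2)$-edge $xy$, which is a contributing edge — contradicting our assumption. Hence we may assume $E(h_1,h_2)=\emptyset$, so $h_1$ and $h_2$ are not incident.

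Then I would apply Lemma~\ref{intersectingboth}: since no $f_i$ contains a contributing edge, each $f_i$ is either a pentagon with boundary $HHD_0AD_0$ or a hexagon with boundary $HHD_0HHD_0$ adjacent to both $h_1$ and $h_2$. A face of the latter type is a common neighboring face of $h_1$ and $h_2$, and since $E(h_1,h_2)=\emptyset$, Lemma~\ref{atmostoneneighboring} gives at most one such face. Thus at least five of $f_1,\dots,f_6$ are pentagons, each sharing an edge — hence at least one vertex — with $h_j$; as the $f_i$ are pairwise distinct, this produces at least five distinct pentagons meeting $V(h_j)$, i.e.\ $p(V(h_j))\geq 5$, contradicting the bound $p(V(h_j))\leq 4$ in (8), which uses that $F$ contains no subgraph $L$ or $R$.

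I expect the only delicate point to be the bookkeeping about neighboring faces: verifying that the six edges of $h_j$ yield six distinct neighboring faces, and that the exceptional hexagonal face of type $HHD_0HHD_0$ is genuinely a common neighbor of $h_1$ and $h_2$ so that Lemma~\ref{atmostoneneighboring} is applicable. Both of these follow from the cyclic-edge-connectivity facts already in hand (Lemmas~\ref{intersectatoneedge} and \ref{atmostoneneighboring}), so no new input is required, and the corollary then drops out of the counting inequality (8).
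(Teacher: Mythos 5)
Your proof is correct and follows essentially the same route as the paper's: Lemma \ref{intersectingboth} forces each contributing-edge-free neighbor of $h_j$ to be a pentagon or a common hexagonal neighbor of $h_1,h_2$, Lemma \ref{atmostoneneighboring} allows at most one of the latter, and the resulting five pentagonal neighbors contradict the absence of $L$ (which is exactly what underlies the bound $p(V(h_j))\leq 4$ in (8) that you invoke instead). Your explicit disposal of the case $E(h_1,h_2)\neq\emptyset$ before applying Lemma \ref{atmostoneneighboring} is a small piece of care the paper skips, but it does not change the argument.
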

\begin{proof} To the contrary, suppose every  neighboring face of
$h_{j}$ has no contributing edges. By Lemma \ref{intersectingboth} the
neighboring hexagonal faces of $h_{j}$ intersect  $h_{3-j}$.
Using Lemma \ref{atmostoneneighboring} we can know at most one
neighboring face of $h_{j}$ is hexagonal. Then its remaining
neighboring pentagonal faces form a subgraph  of $F$ containing $L$,
contradicting the assumption.
\end{proof}

Lemma \ref{intersectingboth} can be generalized as the following
results.

\begin{Lemma}\label{characterizationh} For  a
neighboring face $f$ of $h_{j}$,  $j\in \{1,2\}$, we have the following results.

\hspace{1mm}(1) If $f$ contains precisely one contributing edge, which
belongs to $E(\mathcal{D}^{*})$,  $E(A,H)$ and $E(A,A)$, respectively, then $f$
is a hexagon with the boundaries
$HHD^{*}D^{*}AD_{0},HHAD_{0}AD_{0}$ and $HHD_{0}AAD_{0}$.

\hspace{1mm}(2) If $f$ contains precisely one contributing edge, which belongs to $E(h_{1},h_{2})$, then $f$ is a pentagon with the
boundary $HHHHD_{0}$.

\hspace{1mm}(3) $f$ cannot contain two $E(A,A)$ edges.

\hspace{1mm}(4) $f$ cannot contain one $E(A,A)$, one
$E(\mathcal{D}^{*})$ and one $E(A,H)$ edge.

\hspace{1mm}(5) $f$ cannot contain two $E(A,H)$ edges and one
$E(\mathcal{D}^{*})$ edge.
\end{Lemma}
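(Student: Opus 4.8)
### Plan for the proof of Lemma 3.13

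\textbf{Overall strategy.} All five assertions have the same flavour: each concerns a neighboring face $f$ of $h_j$ and lists the admissible local configurations, or else rules a configuration out. The common engine is a counting argument comparing contributing edges against the budget given by Inequalities (5) and (6), combined with the structural facts that $F$ is cyclically $5$-edge-connected (Lemma 3.7, together with Lemmas 2.6--2.9), that components of $\mathcal{D}^*$ are $2$-connected fragments (Observation 2.2, Proposition 3.8), and that $D_0$ is independent. I would prove the lemma one item at a time, in the order (1), (2), then (3)--(5).

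\textbf{Items (1) and (2): the shape of $f$ when it has exactly one contributing edge.} Fix $j$ and write $E(h_j)\cap E(f)=\{e_0\}$, so $f$ shares exactly one edge with $h_j$ (by Lemma 2.5, two faces of a cyclically $4$-edge-connected cubic plane graph share at most one edge). The two endpoints of $e_0$ lie in $H$; the remaining vertices of $\partial(f)$ lie in $A\cup D_0\cup D^*\cup V(h_{3-j})$. Since $f$ has exactly one contributing edge, every edge of $\partial(f)$ other than the contributing one is either an $E(H,A)$--type edge incident to $h_j$ at an endpoint of $e_0$, or lies entirely inside one class; because $D_0$ is independent, consecutive $D_0$-vertices are impossible, and because the only edges joining $A$ to $H$ or $A$ to $A$ or inside $\mathcal{D}^*$ that are allowed are the single contributing edge, the vertices of $\partial(f)\setminus e_0$ must alternate in a forced pattern. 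Walking around $\partial(f)$ starting from $e_0$, the contributing edge dictates exactly where the non-$D_0$ block sits: if it is an $E(\mathcal{D}^*)$ edge we get the $D^*D^*$ block flanked by $D_0$'s and then $A$, giving $HHD^*D^*AD_0$; if it is an $E(A,H)$ edge it must be incident to an endpoint of $e_0$, producing $HHAD_0AD_0$; if $E(A,A)$, the two $A$'s sit together away from $H$, giving $HHD_0AAD_0$; in all three cases the face has six vertices because a pentagon cannot accommodate the forced $HH + {}$ pattern without creating a second contributing edge or two adjacent $D_0$-vertices. For (2), an $E(h_1,h_2)$ contributing edge forces four consecutive $H$-vertices (two from $h_j$ at $e_0$, two from $h_{3-j}$), and then a single $D_0$ vertex closes a pentagon: $HHHHD_0$; a hexagon here would need a sixth vertex adjacent to two $D_0$'s or would force another contributing edge. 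I would also invoke Lemma 3.10 (which is the no-contributing-edge case) to see that the "leftover" vertices really are in $D_0$ and not, say, in $D^*$.

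\textbf{Items (3), (4), (5): forbidden multi-contributing-edge configurations.} These are handled by pushing the local edge count into a contradiction with (5)/(6) or into a small cyclic edge-cut. For (3): two $E(A,A)$ edges on one face $f$ would, together with the two $E(H,A)$ edges at which $f$ meets $h_j$ (there are at least that many since the $H$-block of $\partial(f)$ has the two endpoints of $e_0$, each sending an edge into $f$ along $\partial(f)$ — these are $E(H,\cdot)$ edges, and the neighbours are in $A$), force $|E(A,A)|\geq 2$ plus extra incidences; a careful tally of contributing edges on $f$ gives $|E(A,H)|+|E(A,A)|+s(\mathcal D)\geq 4$ (at minimum $2$ from the two $E(A,A)$ edges and $2$ from the $E(A,H)$ edges forced at the $H$-corners), violating (5), and when $h_1,h_2$ are incident the budget (6) is even tighter so the same count already overshoots. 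Items (4) and (5) are similar but a touch more delicate: I would show that the prescribed collection of contributing edges on the single face $f$ cannot be realised on a pentagon or hexagon without either (i) the contributing edges summing to more than the global budget, or (ii) two faces sharing two edges / three pairwise-adjacent faces without a common vertex (contradicting Lemma 2.5), or (iii) an induced cyclic edge-cut of size $\le 4$ around part of the configuration, contradicting $c\lambda(F)=5$. Concretely for (5), two $E(A,H)$ edges and one $E(\mathcal D^*)$ edge on a single face, plus the two $H$-corner edges, over-count the budget once one checks that the $E(\mathcal D^*)$ edge of a component $F^*\in\mathcal D^*$ contributes at least $1$ to $s(\mathcal D)$ while the two $E(A,H)$ edges contribute $2$ to $|E(A,H)|$, already reaching $3$, and then the geometry forces at least one more contributing incidence.

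\textbf{Expected main obstacle.} The genuinely fiddly part is the case analysis in (3)--(5): one must be sure that a hexagonal face with the stated contributing edges truly cannot close up, and this requires carefully tracking which of the (at most six) vertices of $\partial(f)$ lie in which class, using the independence of $D_0$, the $2$-connectedness of each $F^*\in\mathcal D^*$, and Lemmas 2.5--2.9 to forbid short cuts. The counting is routine once the local picture is pinned down, but pinning down every sub-case of "where does the $E(\mathcal D^*)$ block attach relative to the $E(A,H)$ edges" is where the work lies; I would organise it by the cyclic order of the special edges around $\partial(f)$ and dispatch each order either by the budget inequality (5)/(6) or by exhibiting a cyclic $k$-edge-cut with $k\le 4$.
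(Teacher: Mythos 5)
Your treatment of items (1) and (2) matches the paper's: fix the common edge $ab=\partial(f)\cap\partial(h_j)$, classify the neighbours $a',b'$ and then walk around $\partial(f)$ using the independence of $D_0$, the $2$-connectedness of components of $\mathcal D^*$ (Observation \ref{2connected}), and the absence of further contributing edges to force the stated boundaries. That part is sound.

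There is, however, a genuine gap in your plan for items (3)--(5). You propose to kill these configurations by showing the contributing edges overshoot the budget of Ineqs. (5)/(6), or by producing a small cyclic edge-cut. Neither mechanism suffices, because each forbidden configuration can exactly saturate the budget without violating it. For (3), your claim that two $E(A,H)$ edges are "forced at the $H$-corners" only holds when the two $E(A,A)$ edges are disjoint; if they share a vertex (boundary $HHAAAD_0$ read as $a,b\in H$, $a',x,y\in A$, $b'\in D_0$), only one $E(A,H)$ edge arises and the tally is $1+2+0=3$, which is permitted by (5). For (4), one edge from each of $E(A,A)$, $E(\mathcal D^*)$, $E(A,H)$ gives exactly $1+1+1=3$; for (5), two $E(A,H)$ edges and $s(\mathcal D)=1$ give exactly $3$. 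No cyclic edge-cut of size at most $4$ appears in these pictures. The ingredient you are missing is the pentagon-covering constraint (*) and Ineq. (7): since $F-H-D^*-E(A,A)$ is bipartite, all twelve pentagons must meet $H\cup D^*$ or an edge of $E(A,A)$, so $p(H)+p(E(A,A))+p(D^*)\geq 12$. In each of (3)--(5) the saturated budget pins down the global structure (e.g.\ in (3), $s(\mathcal D)=0$ forces $D^*=\emptyset$ via $c\lambda(F)=5$; in (4) and (5), $s(\mathcal D)=1$ forces the unique component of $\mathcal D^*$ to be a pentagon with $p(V(F^*))\leq 3$ by the no-$L$/no-$R$ hypothesis), and then one counts at most $10$ or $11$ covered pentagons, contradicting (7). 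Without this counting step the proof of (3)--(5) does not close.
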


\begin{proof} Let $ab$ be a common edge of $h_{j}$ and $f$,
$a',b'$  the neighbors of $a,b$, respectively, not on $h_{j}$,
 $x$ the neighbor of $a'$ on $\partial(f)$ but different
from $a$ (see Fig. 10(a)).

\begin{figure}[h]
\begin{center}
\includegraphics[scale=0.7
]{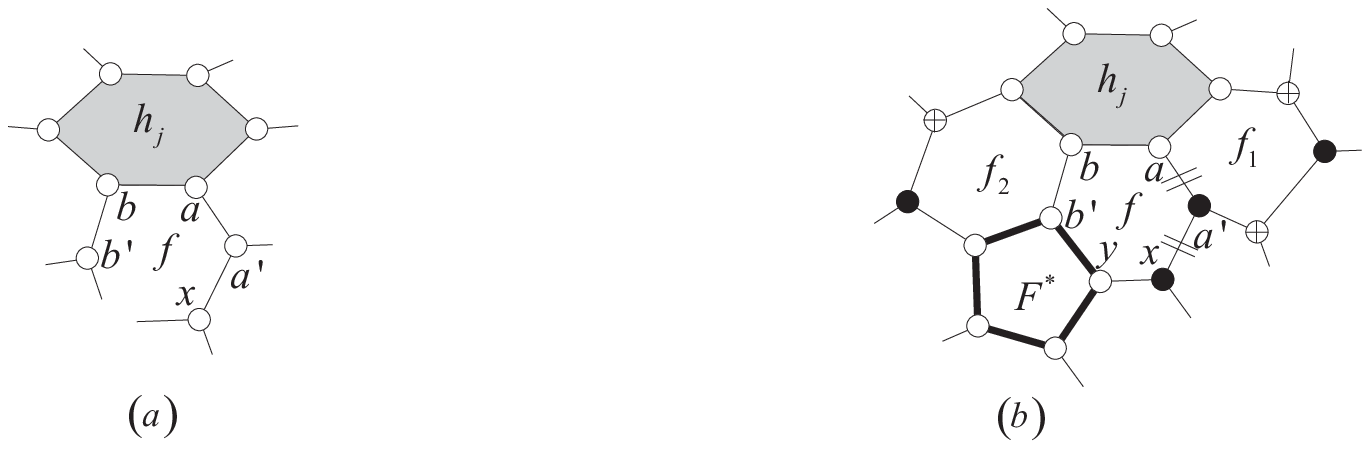}\\{Figure 10. (a) The labellings of face $f$, (b)
Illustration to the proof of Lemma \ref{characterizationh}(4).}
\end{center}
\end{figure}

(1) If  exactly one contributing edge of $f$ belongs to
$E(\mathcal{D}^{*})$, then $a',b'\in D^{*}\cup D_{0}$. So one of $a',b'$, say $a'$, belongs to $D^{*}$. Otherwise,  there would exist edges between $D_{0}$
and $D^{*}$,  a contradiction.
Further $x\in
D^{*}$ by Observation \ref{2connected}. Now $b'\in D_{0}$ as $a'x\in
E(\mathcal{D}^{*})$ being a unique contributing edge of $f$ . Moreover, $b'$ and $x$ must have a common
neighbor belonging to $A$ on $\partial(f)$ since there are no edges
between $D_{0}$ and $D^{*}$. Thus $f$ is a hexagon with the boundary
$HHD^{*}D^{*}AD_{0}$. If $f$ contains exactly one $E(A,H)$ edge,
then we have that  $a',b'\in A\cup D_0$ in an analogous way. Further, we have $a'\in A$ and $b'\in D_{0}$, say.
Also $x\in D_{0}$. Again $b'$ and $x$ must share a common neighbor
belonging to $A$ on $\partial(f)$. Then   $f$ is a hexagon with the
boundary $HHAD_{0}AD_{0}$. If $f$ contains exactly one $E(A,A)$
edge, then $a',b'\in D_{0}$ and $f$ is a hexagon with the boundary
$HHD_{0}AAD_{0}$.

(2) Since $f$ contains precisely one contributing edge, that  belongs to
$E(h_{1},h_{2})$, one of $a',b'$, say $a'$, belongs to
$V(h_{2})$, the other $b'$ to $D_{0}$. Then also $x\in V(h_{2})$ by the
3-regularity of $F$. Now $b'$ and $x$ must be adjacent since $aa'$ is only
one contributing edge of $f$. Hence  $f$
is a pentagon with the boundary $HHHHD_{0}$.

(3) Suppose $f$ contains two $E(A,A)$ edges $e_1$ and $e_2$. Then $f$ includes at least one
$E(A,H)$ edge. Ineqs. (5) and (6) imply that $f$ is a hexagon, $|E(A,H)|=1$,
 $|E(A,A)|=2$, and $s(\cal D)=0$. The latter together with $c\lambda(F)=5$ imply that each component in $\cal D$ is a single vertex; that is $D^*=\emptyset$.
 Hence  $p(H)+p(E(A,A))+p(D^{*})\leq
(p(V(h_{1}))+p(V(h_{2})))+p(\{e_{1},e_{2}\})+0\leq (4+4)+2=10$,
contradicting Ineq. (7).

(4) Suppose $f$ contains one $E(A,A)$, one $E(\mathcal{D}^{*})$ and
one $E(A,H)$ edges. Then $f$ is a hexagon with the boundary
$HHAAD^{*}D^{*}$. Let $aa'xyb'ba$ be the boundary of $f$ along
clockwise direction. Without loss of generality, we may assume
$a',x\in A,b',y\in D^{*}$. Then by Ineqs. (5) and (6) we can know
that $|E(A,H)|=1, |E(A,A)|=1$ and $s(\mathcal{D})=1$. The latter
together with Prop. \ref{5edgecut} imply  $|\cal D^*|=1$ and the
component in  $\cal D^*$ is a pentagon. Let  $f_{1}$ and $f_{2}$ be
the common neighboring faces of $h_{j}$ and $f$, $F^{*}$ the
pentagonal
 component in $\cal D^*$ (see Fig.  10(b)). Then both $f_{1}$
and $f_{2}$ are hexagons by (1). Moreover,
$p(D^{*})=p(V(F^{*}))\leq3$ by Ineq. (8). So
$p(H)+p(E(A,A))+p(D^{*})\leq
(p(V(h_{1}))+p(V(h_{2})))+p(\{a'x\})+p(V(F^{*}))\leq (4+3)+1+3=11$,
contradicting Ineq. (7).

(5) Suppose $f$ contains two $E(A,H)$ edges and one
$E(\mathcal{D}^{*})$  edge. In an analogous way  as (4), we show that $f$ is a hexagon and the
 component in $\cal D^*$, say $F^{*}$, is a pentagon. Moreover,
$p(E(A,A))=0,p(D^{*})=p(V(F^{*}))\leq3$. Again we have
$p(H)+p(E(A,A))+p(D^{*})\leq(4+4)+3=11$, a contradiction.
\end{proof}

Note: By Observation \ref{2connected} and Lemma
\ref{characterizationh} (5) we can know if $h_{i}$ is not incident to
a non-trivial factor-critical component $F^{*}$, then the
neighboring faces of $h_{i}$ do not contain $E(F^{*})$ edges for
$i\in \{1,2\}$. This fact is used elsewhere in this paper.

\begin{Obser}\label{characterizationm2} Let $f$ be a face of $F$
with precisely one contributing edge, which belongs to $E(A,A)$.
Then $f$ is either a pentagon with the boundary $AAD_{0}AD_{0}$ or a
hexagon with the boundary $HHD_{0}AAD_{0}$.

\end{Obser}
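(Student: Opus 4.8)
The plan is to determine the type of every vertex on $\partial(f)$ by working outward from the unique contributing edge. Write $uv$ for that edge, so $u,v\in A$, and let $u'$ be the neighbour of $u$ on $\partial(f)$ other than $v$, and $v'$ the neighbour of $v$ on $\partial(f)$ other than $u$. The first step is to show $u',v'\in D_{0}$. Since $uv$ is the only contributing edge of $f$, the edge $uu'$ is not contributing; as $u\in A$ this excludes $u'\in H$ (which would give $uu'\in E(A,H)$) and $u'\in A$ (which would give $uu'\in E(A,A)$), so $u'\in D^{*}\cup D_{0}$. If $u'\in D^{*}$, lying in a component $F^{*}\in\mathcal{D}^{*}$, then $F^{*}$ is $2$-connected by Observation \ref{2connected}; since the edge $uu'$ leaves $F^{*}$, the other two edges at $u'$ lie in $F^{*}$, so the edge of $\partial(f)$ at $u'$ other than $uu'$ is an $E(\mathcal{D}^{*})$ edge --- a second contributing edge of $f$, a contradiction. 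Hence $u'\in D_{0}$, and symmetrically $v'\in D_{0}$. Since $D_{0}$ is independent and distinct components of $F-H-A$ are non-adjacent (so there are no $E(D_{0},D^{*})$ edges), every neighbour of $u'$ and of $v'$ lies in $H\cup A$.

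I would then split according to whether $f$ is a pentagon or a hexagon, these being the only possibilities. If $f$ is a pentagon, write $\partial(f)=u'uvv'z$, where $z$ is the $\partial(f)$-neighbour common to $u'$ and $v'$; by the first step $z\in H\cup A$. Suppose $z\in V(h_{i})$ for some $i$. Then $f\neq h_{i}$ (as $f$ has a vertex in $A$ while $V(h_{i})\subseteq H$), and at the $3$-valent vertex $z$ the face $f$ uses the two consecutive edges $zu'$ and $zv'$; hence $h_{i}$, being a face at $z$ distinct from $f$, uses the third edge at $z$ together with $zu'$ or with $zv'$, forcing $u'\in V(h_{i})$ or $v'\in V(h_{i})$ --- impossible since $u',v'\in D_{0}$. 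Therefore $z\in A$, and reading round $\partial(f)$ gives the boundary $AAD_{0}AD_{0}$.

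If $f$ is a hexagon, write $\partial(f)=u'uvv'z_{2}z_{1}$ with $z_{1}$ adjacent to $u'$ and $z_{2}$ adjacent to $v'$ on $\partial(f)$; then $z_{1},z_{2}\in H\cup A$ by the first step. The edge $z_{1}z_{2}$ (distinct from $uv$) is not contributing, so $z_{1},z_{2}$ are not both in $A$ (else $z_{1}z_{2}\in E(A,A)$) and do not lie one in $A$ and one in $H$ (else $z_{1}z_{2}\in E(A,H)$); hence $z_{1},z_{2}\in H$. Moreover $z_{1}z_{2}\notin E(h_{1},h_{2})$, so $z_{1}$ and $z_{2}$ lie in the same $h_{i}$. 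Thus the type of every vertex of $\partial(f)$ is now determined, and reading round $\partial(f)$ gives the hexagonal boundary $HHD_{0}AAD_{0}$. (Alternatively, arguing at $z_{1}$ as in the pentagon case shows $h_{i}$ cannot contain $z_{1}u'$, hence contains $z_{1}z_{2}$, so $f$ is a neighbouring face of $h_{i}$ and the conclusion also follows from Lemma \ref{characterizationh}(1).) Since $f$ is a pentagon or a hexagon, this proves the statement.

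The step I expect to be the crux is the hexagon case: one must force the two $\partial(f)$-neighbours $z_{1},z_{2}$ of $u'$ and $v'$ into $H$ and onto a common $h_{i}$, and this is precisely where it is used that $z_{1}z_{2}$ is non-contributing, hence lies in none of $E(A,A)$, $E(A,H)$, $E(h_{1},h_{2})$. The remaining work --- excluding $u'\in D^{*}$, and excluding $z\in H$ in the pentagon case --- is routine bookkeeping with contributing edges, using the $2$-connectivity of components in $\mathcal{D}^{*}$ (Observation \ref{2connected}) and the independence of $D_{0}$.
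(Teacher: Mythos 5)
Your proof is correct and follows essentially the same route as the paper's: determine the two $\partial(f)$-neighbours of the $E(A,A)$ edge to be in $D_{0}$, then use the independence of $D_{0}$, the absence of $E(D_{0},D^{*})$ edges, and the uniqueness of the contributing edge to classify the remaining vertices in the pentagon and hexagon cases. You merely fill in details the paper leaves implicit (ruling out $u'\in D^{*}$ via Observation \ref{2connected}, and excluding $z\in H$ in the pentagon case by $3$-regularity), which is sound.
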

\begin{proof} Let $ab\in E(A,A)\cap E(f)$ and $a',b'$ be the neighbors
of $a,b$, respectively, on $\partial(f)$ but $a'\neq b,b'\neq a$.
Then $a',b'\in D_{0}$ as $f$ contains precisely one contributing
edge $ab$. If $f$ is a pentagon, then $a'$ and $b'$ must have a
common neighbor on $\partial(f)$ belonging to $A$ and $f$ has the
form $AAD_{0}AD_{0}$. If $f$ is a hexagon $aa'xyb'ba$, then
$x,y\notin D$. Since $f$ has no other $E(A,A)$ edge except for $ab$,
one of $x$ and $y$ belongs to $H$ and $xy$ belongs to $E(h_i)$  for
some $i\in \{1,2\}$. Thus $f$ has the form $HHD_{0}AAD_{0}$.
\end{proof}

\begin{Obser}\label{characterizationm3} Let $f$ be a face of $F$
containing no contributing edges and $f\neq h_{1},f\neq h_{2}$. Then
$f$ is either a pentagon with the boundary $HHD_{0}AD_{0}$ or a hexagon
intersecting both $h_{1}$ and $h_{2}$ and with the boundary
$HHD_{0}HHD_{0}$ or a hexagon with the boundary
$AD_{0}AD_{0}AD_{0}$.
\end{Obser}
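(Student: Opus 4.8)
The plan is to mimic the local analysis already carried out in Lemma \ref{intersectingboth} and Observations \ref{characterizationm2}–\ref{characterizationm3}, exploiting the fact that $F-H-D^{*}-E(A,A)$ is a bipartite graph with bipartition $D_{0}\cup A$ (fact $(*)$), together with the structural restrictions $(1)$–$(6)$ and the information that, since $f$ carries no contributing edges, every vertex of $\partial(f)$ lies in $H\cup A\cup D_{0}$ and no edge of $\partial(f)$ joins $A$ to $H$, joins $A$ to $A$, lies in $E(\mathcal{D}^{*})$, or lies in $E(h_{1},h_{2})$. First I would observe that $f$ contains no vertex of $D^{*}$: a vertex of $D^{*}$ has, by Observation \ref{2connected}, two of its three neighbours inside its (non-trivial) component, so one of the two edges of $\partial(f)$ at that vertex would be an $E(\mathcal{D}^{*})$ edge, contradicting the hypothesis. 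Hence $V(f)\subseteq H\cup A\cup D_{0}$.

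Next I would analyse the cyclic sequence of labels $H,A,D_{0}$ around $\partial(f)$ under the ban on $AA$, $AH$ edges and on $E(h_{1},h_{2})$ edges. Because $D_{0}$ is independent, no two consecutive vertices lie in $D_{0}$; because $AA$ and $AH$ edges are forbidden, every neighbour of an $A$-vertex on $\partial(f)$ lies in $D_{0}$; and because $E(h_{1},h_{2})$ edges are forbidden, any edge of $\partial(f)$ with both ends in $H$ must lie on a single hexagon $h_{i}$. Consequently, reading around $\partial(f)$, the $A$-vertices are isolated and flanked on both sides by $D_{0}$, while the $H$-vertices appear in runs, each run being a sub-path of some $h_{i}$; and a run of $H$-vertices must be terminated on each side by a $D_{0}$-vertex (the vertex of $\partial(f)$ just outside the run is not in $A$ since its $H$-neighbour would give an $AH$ edge, and not in $H$ since then the run would be longer or we would cross from one hexagon to the other via an $E(h_{1},h_{2})$ edge, which is barred). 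So every maximal $H$-run and every $A$-vertex is separated from the rest by $D_{0}$-vertices, which forces the pattern to be a concatenation of blocks of the two types $\,D_{0}(H)^{t}D_{0}\,$ (sharing the boundary $D_{0}$'s) and $\,D_{0}A\,$. Counting, for a pentagon the only possibilities are $HHD_{0}AD_{0}$ (one $H$-run of length $2$ plus one $A$) and $AD_{0}AD_{0}D_{0}$ — but the latter has adjacent $D_{0}$'s and is impossible — so a pentagonal $f$ has boundary $HHD_{0}AD_{0}$, which (as in Lemma \ref{intersectingboth}) means it is a neighbouring pentagon of exactly one $h_{i}$; for a hexagon the possibilities are $HHD_{0}HHD_{0}$ (two $H$-runs of length $2$) and $AD_{0}AD_{0}AD_{0}$ (three $A$'s), since $HHHHD_{0}D_{0}$ again has adjacent $D_{0}$'s and $HHD_{0}AD_{0}A$-type mixtures would again create an $AH$ edge where a length-$2$ $H$-run meets an $A$ block.

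Finally I would note that in the case $HHD_{0}HHD_{0}$ the two $H$-runs cannot lie on the same hexagon $h_{i}$: if they did, then $f$ and $h_{i}$ would share two disjoint edges, contradicting Lemma \ref{lessthan1} (or Lemma \ref{intersectatoneedge}); hence one run lies on $h_{1}$ and the other on $h_{2}$, so $f$ is a hexagon intersecting both $h_{1}$ and $h_{2}$, exactly as stated. The case $f=h_{1}$ or $f=h_{2}$ is excluded by hypothesis, and the case of an all-$D_{0}$ or otherwise degenerate boundary is ruled out by independence of $D_{0}$. The main obstacle I anticipate is not any single deep step but the careful bookkeeping of the label patterns around $\partial(f)$ — in particular making fully rigorous the claim that an $H$-run on $\partial(f)$ must be a genuine sub-path of one hexagon and must be flanked by $D_{0}$ on both sides — and checking that no mixed pattern of length $5$ or $6$ other than the three listed survives all the forbidden-edge constraints simultaneously.
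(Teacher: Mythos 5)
Your overall strategy is sound and is in fact more self-contained than the paper's: the paper disposes of every case containing an $H$-vertex by observing (via fact $(*)$ for the pentagon, and the exclusion of $D^{*}$ for the hexagon) that $f$ is then a neighbouring face of some $h_{i}$ with no contributing edges, and simply quoting Lemma \ref{intersectingboth}; the remaining hexagon with $V(f)\subseteq A\cup D_{0}$ alternates because $D_{0}$ is independent and $E(A,A)\cap E(f)=\emptyset$. Your opening step (no $D^{*}$-vertex on $\partial(f)$ by Observation \ref{2connected}) and your closing step (the two $H$-runs of $HHD_{0}HHD_{0}$ lie on different hexagons, by Lemma \ref{intersectatoneedge}) match the paper's reasoning.

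However, your central enumeration has a concrete gap: the blocks $D_{0}(H)^{t}D_{0}$ are not restricted to $t=2$ by any of the edge-label constraints you impose. The hexagon patterns $HD_{0}AD_{0}AD_{0}$ (an $H$-run of length $1$) and $HHHD_{0}AD_{0}$ (an $H$-run of length $3$ inside a single $h_{i}$) contain no $AA$, no $AH$, no $E(h_{1},h_{2})$ and no $E(\mathcal{D}^{*})$ edge and no adjacent $D_{0}$'s, so they survive every constraint you list, yet they are not among the three admissible boundaries. Excluding them needs two geometric facts about cubic plane graphs that you never state: (i) a face other than $h_{i}$ that contains a vertex $v$ of $h_{i}$ must contain one of the two edges of $h_{i}$ at $v$ (the three faces at $v$ correspond to the three pairs of edges at $v$, and $h_{i}$ occupies the pair inside $h_{i}$), so every $H$-run has length at least $2$ --- this is exactly the ``3-regularity'' step the paper uses to conclude that $f$ is a neighbouring face of $h_{i}$; and (ii) two distinct faces cannot share two consecutive edges $uv,vw$ (they would both correspond to the same edge-pair at $v$), so every $H$-run has length at most $2$. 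Note that Lemma \ref{intersectatoneedge} only forbids two \emph{disjoint} shared edges and does not by itself rule out case (ii). With (i) and (ii) added, your block decomposition does close up and yields exactly the three stated boundaries; without them the step ``the only possibilities are \dots'' is unjustified.
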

\begin{proof} If $f$ is a pentagon,  then by (*), $f$ contains a vertex of  $H$. Thus $f$ is a
neighboring face of $h_{i}$  for some $i\in \{1,2\}$ by the 3-regularity of $F$. Hence $f$ has the form $HHD_{0}AD_{0}$ by
Lemma \ref{intersectingboth}. Now suppose $f$ is a hexagon. Then $V(f)\cap  D^{*}=\emptyset$ as $f$
contains no contributing edges. If $f$ has  a
vertex of  $H$,  then $f$ is a neighboring face of $h_{i}$  for
some $i\in \{1,2\}$,
without contributing edges. Thus $f$ intersects both $h_{1}$ and
$h_{2}$ with the boundary $HHD_{0}HHD_{0}$ by Lemma
\ref{intersectingboth}. Otherwise  all vertices of $f$ belong to $A\cup
D_{0}$ and $f$ has the form $AD_{0}AD_{0}AD_{0}$.
\end{proof}

We continue with some structural lemmas.

\begin{Lemma}\label{characterizationf} Let $F^{*}\in {\cal D}^*$ be a pentagon denoted clockwise by
$v_{1}v_{2}v_{3}v_{4}v_{5}v_{1}$. If $f$ is a neighboring face of $F^{*}$ with
$v_{2}v_{2}'xyv_{1}'v_{1}v_{2}$, then one of the following assertions holds.

(1) $f$ is a hexagon, and (i) $v_{1}',y\in V(h_{i}),v_{2}',x\in
V(h_{3-i})$ for $i\in \{1,2\}$, or (ii) $v_{1}',y\in
V(h_{i}),v_{2}'\in A, x\in D_{0}$  $  (v_{2}', x\in
V(h_{i}), v_{1}'\in A, y\in D_0)$,   or (iii)
$v_{1}',v_{2}'\in A,x,y\in D^{*}\setminus V(F^{*})$, or ($iv$)
$v_{1}',v_{2}',x\in A, y\in D_{0}$ $ (v_{1}',v_{2}',y\in A, x\in D_{0})$.

(2) $f$ is a pentagon ($x=y$),  and ($v$)
$v_{1}',x\in V(h_{i}),v_{2}'\in A$  $(v_{2}',x\in V(h_{i}), v_{1}'\in A$) for $i\in
\{1,2\}$, or ($vi$) $v_{1}',v_{2}'\in A,x\in D_{0}$.
\end{Lemma}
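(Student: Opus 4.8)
The plan is to enumerate, case by case, the possible types of the vertices $v_1',v_2',x,y$ on the boundary of the neighboring face $f$, using the two governing facts: first, $F^*$ is a nontrivial factor-critical component, hence by Observation~\ref{2connected} it is $2$-connected, so no vertex of $F^*$ (in particular $v_1$ or $v_2$) can have an edge leaving to $D_0$, and any edge $v_1v_1'$ or $v_2v_2'$ leaving $F^*$ goes either into $A$, into $H$, or (if into $D^*$) then into the \emph{same} component $F^*$ — but since $v_1,v_2$ are already on $F^*$ and $f$ is a face distinct from the pentagon $F^*$, the edges $v_1v_1'$, $v_2v_2'$ land in $A$ or $H$; secondly, the contributing-edge budget $s(\mathcal D)\le 3$ together with Ineqs.~(5),(6) and the pentagon-counting argument (*),(7),(8) tightly limits how many contributing edges $f$ can carry. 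Concretely, one of $v_1v_1'$, $v_2v_2'$ is an $E(A,H)$ edge precisely when the corresponding endpoint $v_j'$ lies in $H$, and these are contributing edges charged against the budget.

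First I would record the basic dichotomy on $v_1'$ and $v_2'$: each lies in $A\cup H$. This gives three gross cases up to the $i\leftrightarrow 3-i$ / reflection symmetry: (a) both $v_1',v_2'\in H$; (b) exactly one of them, say $v_1'\in H$ and $v_2'\in A$; (c) both $v_1',v_2'\in A$. In case~(a), the edge $v_1v_1'$ and $v_2v_2'$ are two distinct contributing $E(A,H)$... wait — they are edges from $F^*\subseteq D^*$ into $H$, so they lie in $\nabla(H)$ and also in $\nabla(F^*)$; I would argue via $\nabla(H)\le 12$ and Lemma~\ref{lessthan1} that $v_1',v_2'$ must lie on a common hexagonal neighboring face, forcing $f$ to be a hexagon with $v_1',y\in V(h_i)$ and $v_2',x\in V(h_{3-i})$ (subcase (i)), and the possibility $v_1',v_2'$ both in the same $h_i$ is excluded because then $f$ would have too short a boundary or $h_i$ would meet $F^*$ in two disjoint edges, contradicting Lemma~\ref{intersectatoneedge}. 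In case~(b), $v_1v_1'\in E(A,H)$ is one contributing edge; then I would analyze $x$, the neighbor of $v_1'$ on $\partial(f)$: by $3$-regularity and the structure around $h_i$, either $x\in V(h_i)$ (and then, tracing along $f$, we reach either a pentagon with $y$ absorbed — subcase (v) — or a hexagon, subcases (ii),(iv) depending on whether $y\in D_0$ or the far side returns to $A$), all constrained so that no further contributing edge beyond the allowed count appears.

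In case~(c), both $v_1v_1'$ and $v_2v_2'$ avoid $H$, so neither is an $E(A,H)$ edge; the only contributing edges $f$ can have are among $v_1v_2$ itself is \emph{not} a boundary edge of $f$ (it is the shared edge with $F^*$) — rather the candidates are $E(\mathcal D^*)$ edges on the $x,y$ side. If $x,y\in D^*\setminus V(F^*)$ we land in subcase~(iii); here I would have to check this $x,y$ component is consistent with $s(\mathcal D)\le 3$ and that the two edges $v_1'x'$-type do not create an illegal cut. If instead $x\in D_0$ (or $y\in D_0$), then since $D_0$ is independent and has no edges to $D^*$, its other neighbors are in $A$, and we land in subcase~(iv) or~(vi) according to whether $f$ is a hexagon or pentagon. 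Throughout, the parity/degree constraint that $f$ has length $5$ or $6$ and the ban on $f$ having two $E(A,A)$ edges (Lemma~\ref{characterizationh}(3), applied in the form of Observation~\ref{characterizationm2}) rule out the spurious configurations.

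The main obstacle I expect is case~(c) with the $E(\mathcal D^*)$-edge possibility (subcase (iii)) and distinguishing it cleanly from configurations where $x$ or $y$ would be forced into $H$ or into a \emph{different} nontrivial component: I need Observation~\ref{2connected} to say that once $v_1'\in D^*$ we would have $v_1'$ in the same component as $F^*$, but here $v_1'\in A$, so the subtlety is really on the $x,y$ side, where I must show that if $x\in D^*$ then in fact $x\in D^*\setminus V(F^*)$ is consistent and $y$ is likewise, and that no cyclic edge-cut of size $<5$ (using Lemma~\ref{trivial}, so no nontrivial $5$-cut either) is created by the three or four edges separating $\{F^*,f\text{-interior}\}$ from the rest. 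Managing the pentagon-count inequalities (7),(8) to close off the cases where $f$ would carry a third distinct contributing edge is the other place where care is needed, but it parallels the arguments already made in Lemma~\ref{characterizationh}.
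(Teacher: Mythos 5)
Your overall case split on $v_1',v_2'\in A\cup H$ (both in $H$ / exactly one in $H$ / both in $A$) is the same as the paper's, and your handling of cases (a) and (b) — Lemma~\ref{intersectatoneedge} forcing $v_1',v_2'$ into \emph{different} hexagons in case (a), and Lemma~\ref{characterizationh}(4) killing the extra contributing edge in case (b) — matches the intended argument. But there is a genuine gap in case (c), precisely at the sub-case you flag as needing care. When $f$ is a pentagon with $v_1',v_2'\in A$ you must exclude $x\in A$, i.e.\ the boundary type $D^*D^*AAA$ carrying the two $E(A,A)$ edges $v_2'x,v_1'x$ plus the $E(\mathcal D^*)$ edge $v_1v_2$. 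The tools you invoke do not apply here: Lemma~\ref{characterizationh}(3) bans two $E(A,A)$ edges only on a \emph{neighboring face of $h_j$}, which $f$ need not be, and Observation~\ref{characterizationm2} presupposes that $f$ has exactly one contributing edge. Moreover the budget alone does not close this case, since $|E(A,A)|=2$ and $s(\mathcal D)=1$ sums to exactly $3$ and saturates Ineq.~(5) without violating it. The paper needs a separate, nontrivial argument: the saturation forces $E(A,H)=\emptyset$ and no other contributing edges, which via Lemma~\ref{characterizationh}(1) and part (1) of the lemma pins down all four remaining neighboring faces of $F^*$ as hexagons $D^*D^*HHD_0A$ with $v_3,v_5$ attached to $h_i,h_{3-i}$ (the Fig.~11 configuration), and then the $L$-freeness of $F$ gives $p(V(h_1)),p(V(h_2))\le 3$, so $p(H)+p(E(A,A))+p(D^*)\le 3+3+3+2=11<12$, contradicting Ineq.~(7). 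Without some such pentagon-count, case (vi) is not established.

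A second, smaller omission: in case (c) you assert that the only candidate contributing edges lie "on the $x,y$ side" in $E(\mathcal D^*)$, but you never rule out $x\in H$ (or $y\in H$). The paper does this by noting that $x\in V(h_i)$ forces $y\in V(h_i)$ by $3$-regularity (since $v_2'\notin H$), whence $f$ would be a neighboring face of $h_i$ with two $E(A,H)$ edges and one $E(\mathcal D^*)$ edge, contradicting Lemma~\ref{characterizationh}(5). Similarly, the hexagon sub-case $v_1',v_2',x,y\in A$ should be explicitly excluded via $|E(A,A)|\ge 3$ together with $s(\mathcal D)\ge 1$ against Ineq.~(5). These are fixable, but as written the enumeration is not airtight.
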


\begin{proof} By Lemma \ref{intersectatoneedge}, $|E(f)\cap
E(F^{*})|=1$. That is, $v_{1}',v_{2}',x,y\notin V(F^{*})$. Let
$f_{1},f_{2},f_{3},f_{4}$ be the neighboring faces of $F^{*}$ along
the edges $v_{2}v_{3},v_{3}v_{4}, v_4v_5,v_{5}v_{1}$, respectively.
Denote by $v_{3}',v_{4}',v_{5}'$ the neighbors of
$v_{3},v_{4},v_{5}$, respectively, not in $F^{*}$.

We first consider the case that  $f$ is a hexagonal face. Since $v_{1}',v_{2}'\in
H\cup A$, we have the following three cases.

\emph{Case 1.} $v_{1}',v_{2}'\in H$. Let $v_1'\in V(h_{i})$ and $v_{2}'\in V(h_{j})$ for  $i, j\in \{1,2\}$. Then
$v_{1}'y\in E(h_{i})$ and $v_{2}'x\in E(h_{j})$.  Lemma \ref{intersectatoneedge} implies that $i\not= j$. So ($i$) holds.

\emph{Case 2.} $v_{1}'\in H,v_{2}'\in A$ or $v_{1}'\in A,v_{2}'\in
H$. By symmetry, we only need to consider the former situation.
Then  $v_{1}'y\in E(h_{i})$ for some $i\in \{1,2\}$ by the 3-regularity of $F$. Further by   Observation
\ref{2connected},  we have that $x\notin H\cup D^{*}$ and  $x\in A\cup D_{0}$. Then
$x\in D_{0}$ and ($ii$) holds. Otherwise,  $x\in A$. Then $f$ would contain
three contributing edges $v_{1}v_{2},v_{2}'x,xy$ belonging to
$E(\mathcal{D}^{*}),E(A,A),E(A,H)$, respectively, contradicting  Lemma \ref{characterizationh}(4).

\emph{Case 3.} $v_{1}',v_{2}'\in A$. Then $x,y\notin H$.  Otherwise   $xy\in E(h_i)$ for some  $i\in\{1,2\}$. That is, $f$ is a neighboring face of $h_{i}$ with two
$E(A,H)$ edges and one $E(\mathcal{D}^{*})$ edge, contradicting  Lemma
\ref{characterizationh}(5). If $x\in A$, then
 $y\notin D^{*}$ by
Observation \ref{2connected}. Also $y\notin A$; otherwise,
$|E(A,A)|\geq3$ and $s(\mathcal{D})\geq1$, contradicting Ineq. (5). So $y\in D_{0}$ and ($iv$) holds. If $x\in
D^{*}\setminus V(F^{*})$, then $y\in D^{*}\setminus V(F^{*})$ by Observation
\ref{2connected}, and  ($iii$) holds. If $x\in D_{0}$, then $y\in
H\cup A$. Obviously, $y\notin H$. So $y\in A$ and ($iv$) holds.

 Now suppose $f$ is a pentagon. Also, $v_{1}',v_{2}'\in H\cup A$.  Further,  either one of $v_{1}',v_{2}'$
belongs to $H$, the other to $A$ or both $v_{1}',v_{2}'$ belong to
$A$. If the former holds, then by symmetry, we may assume
$v_{1}'x\in E(h_{i})$ for $i\in \{1,2\}$ and $v_{2}'\in A$. Hence
($v$) holds.  If the latter holds, then also $x\notin H\cup  D^{*}$.
We claim that  $x\in D_{0}$,  and  ($vi$) holds. Suppose to the
contrary that  $x\in A$. Then $f$ contains three contributing edges
$v_{1}v_{2},v_{2}'x,v_{1}'x$.  Ineqs. (5) and (6) imply that
$E(A,H)=\emptyset$, $E(A,A)=\{v_{2}'x,v_{1}'x\}$, and $s(\cal D)=1$.
That is, there are no other contributing edges except for
$v_{2}'x,v_{1}'x$ and $E(F^{*})$. Moreover, both $f_{1}$ and $f_{4}$
are hexagons by the assumption and do not contain contributing edges
except for $E(F^{*})$. This fact means $v_{3}$ and $v_{5}$ are
incident to $h_{i}$ and $h_{3-i}$, respectively, for $i\in \{1,2\}$
by applying Lemma \ref{lessthan1} and Part (1) of this lemma  to the
faces $f_{1},f_{4}$. Furthermore, $f_{1},f_{2},f_{3},f_{4}$ are
hexagons with the boundaries $D^{*}D^{*}HHD_{0}A$ by Lemma
\ref{characterizationh}(1). Then we have the configuration shown in
Fig.  11. Note at least one neighboring face of $h_{i}$ different
from $f_{1},f_{2}$ is hexagonal in order to prevent the forbidden
subgraph $L$ occurring in $F$. So $p(V(h_{i}))\leq3$. Similarly,
$p(V(h_{3-i}))\leq3$. Hence,
$p(H)+p(E(A,A))+p(D^{*})\leq(3+3)+3+2=11$, contradicting Ineq. (7).
This contradiction verifies the claim.
\end{proof}
\begin{figure}[h]
\begin{center}
\includegraphics[scale=0.8
]{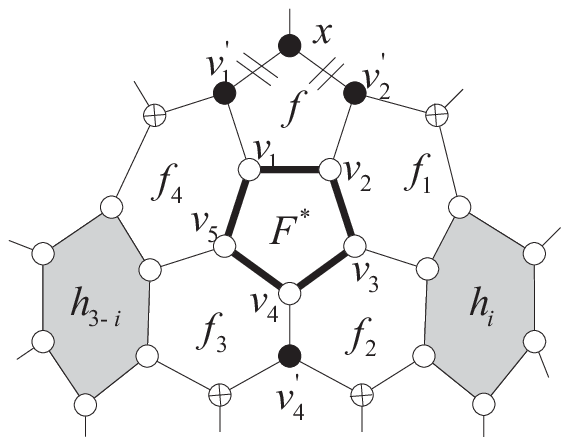}\\{Figure 11. Illustration to the proof of Lemma
\ref{characterizationf}: $f$ is a   pentagon.}
\end{center}
\end{figure}

Some extensions to Lemma \ref{characterizationf} can be obtained in a similar way as follows.

\begin{Lemma}\label{characterizationf1} Let $F^{*}\in \cal D^*$ with a neighboring face $f$. Assume that $f$ a path $a'P(a,b)b'$ on its boundary such that $P(a,b)\subset F^*$ and $a',b'\notin V(F^*)$.

If  $P(a,b)$ is of length one  and  $f$ is a hexagon with $a',b'\in A$, then $\partial(f)$
has the form $D^{*}D^{*}AAD_{0}A$ or $D^{*}D^{*}AD^{*}D^{*}A$.

If $f$ contains  contributing edges belonging
only to $E(\mathcal{D}^{*})$, then $a',b'\in A\cup H$ and one of the following assertions
holds.

($i$) $a'b'\in E(h_{i})$ for some $i\in \{1,2\}$.

($ii$) $a',b'\in A$ and $a',b'$ share a common neighbor belonging to
$D_{0}$ or $f$ is a hexagon with the boundary $D^{*}D^{*}AD^{*}D^{*}A$.

($iii$) one of $a',b'$ belongs to $A$, the other to $H$ and $f$ is a
hexagon with the boundary $D^{*}D^{*}HHD_{0}A$.

\end{Lemma}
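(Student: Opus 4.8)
The plan is to mimic the case analysis of Lemma \ref{characterizationf}, using the same three facts repeatedly: Lemma \ref{intersectatoneedge} forces $F^{*}$ and $f$ to share exactly one edge (so $P(a,b)$ has length one and $a',b'\notin V(F^{*})$, with $a',b'$ non-adjacent to interior vertices of $P(a,b)$); Observation \ref{2connected} forbids a vertex of $D^{*}$ on $\partial(f)$ from having a neighbor in $D_{0}$; and Ineqs. (5)--(6) together with $c\lambda(F)=5$ bound $|E(A,H)|$, $|E(A,A)|$ and $s(\mathcal{D})$. Write $\partial(f)=a'\,a\,b\,b'\dots a'$ with $ab=E(f)\cap E(F^{*})$, and let $x$ be the neighbor of $a'$ on $\partial(f)$ other than $a$ (and symmetrically $y$ for $b'$ when $f$ is a hexagon), exactly as in the previous lemma.

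For the first assertion, assume $f$ is a hexagon with $a',b'\in A$. Then $aa',bb'\in E(A,H)$ are \emph{not} contributing in the relevant sense only because $a,b\in D^{*}$ — wait, in fact $aa',bb'\in E(A,D^{*})$, which are not among the four contributing types; the contributing edges of $f$ lie on the path $a'xyb'$. First I would rule out $x\in H$ (resp.\ $y\in H$): if $x\in H$ then $a'x\in E(A,H)$ and, by the $3$-regularity applied at the $H$-vertex, the next vertex is also in $H$ giving $xy\in E(h_i)$; combined with $y$'s status this forces either a second $E(A,H)$ edge $yb'$ or an $E(h_1,h_2)$ edge, and in all sub-cases one counts $p(H)+p(E(A,A))+p(D^{*})\le 11$ using Ineq. (8) ($p(V(h_i))\le 4$, and $p(V(F^{*}))\le 3$ for the pentagon $F^{*}$, with $|\mathcal D^{*}|=1$ forced by the small budget), contradicting Ineq. (7). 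Hence $x,y\notin H$, and $x,y\notin D_{0}$ since $x,y$ are neighbors of the $A$-vertices $a',b'$ only through edges that would then be non-contributing but their \emph{other} neighbor issue — more precisely, $x,y\notin D_{0}$ would make $a'x$ or $b'y$ a non-issue but then $xy$ must be examined: if $x,y\in D_{0}$ then $xy\in E(\mathcal D^{*})$ is impossible, so $xy$ is a non-edge of the forbidden types only if one of them lies in $A$, contradiction with both in $D_{0}$ being adjacent via an $E(A,A)$-free edge. Working through this, the only survivors are $x,y\in A$ (giving boundary $D^{*}D^{*}AAAA$, which one shows reduces to $D^{*}D^{*}AAD_{0}A$ after re-examining — actually the clean output is: $x\in A$ forces $y\in D_{0}$ by the $E(A,A)$ budget, giving $D^{*}D^{*}AAD_{0}A$) or $x,y\in D^{*}$ (giving $D^{*}D^{*}AD^{*}D^{*}A$). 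The bookkeeping here is the main obstacle: one must carefully track which edges count toward which of $|E(A,H)|,|E(A,A)|,s(\mathcal D)$ and invoke Observation \ref{2connected} at each $D^{*}$-vertex to prevent a $D_{0}$-neighbor.

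For the second assertion, suppose every contributing edge of $f$ lies in $E(\mathcal D^{*})$. Then no vertex of $\partial(f)$ outside $F^{*}$ lies in $D_{0}$ adjacent to a $D^{*}$-vertex, and since $a',b'$ cannot be in $D^{*}$ (else $aa'$ or $bb'\in E(\mathcal D^{*})$ and $F^{*}$ together with $f$ share more than one edge, violating Lemma \ref{intersectatoneedge}) nor in $D_{0}$ (a $D_{0}$-vertex adjacent to $a\in D^{*}$ contradicts Observation \ref{2connected} / the definition of $D_{0}$ as independent set with no $D_{0}$--$D^{*}$ edges), we get $a',b'\in A\cup H$. Now split: if $a',b'\in H$, the $3$-regularity at $a'$ forces its boundary-neighbor $x\in H$ too, so $a'b'$-side closes up quickly and one reads off $a'b'\in E(h_i)$, giving ($i$); if one of $a',b'$ is in $H$ and the other in $A$, then following $x$ (the $H$-side neighbor) and using Lemma \ref{characterizationh}(1) applied to $f$ as a neighboring face of $h_i$ pins $f$ down to the hexagon $D^{*}D^{*}HHD_{0}A$, which is ($iii$); if $a',b'\in A$, then the chain from $a'$ and $b'$ inward consists of $D_{0}$ or $D^{*}$ vertices, and whether $f$ is a pentagon or hexagon one gets either $a',b'$ sharing a $D_{0}$-neighbor or the hexagon $D^{*}D^{*}AD^{*}D^{*}A$, which is ($ii$) — here again Observation \ref{2connected} is what forbids the stray $D^{*}$--$D_{0}$ adjacency and the $s(\mathcal D)$-budget caps how many $D^{*}$-vertices can appear. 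I expect the hexagon-versus-pentagon split in case $a',b'\in A$, combined with keeping the $p(\cdot)$-count below $12$, to be the fiddly part, but it is entirely parallel to the corresponding cases of Lemma \ref{characterizationf} and Observation \ref{characterizationm2}.
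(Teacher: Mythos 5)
The paper offers no written proof of this lemma (it only remarks that it follows ``in a similar way'' from Lemma \ref{characterizationf}), and your general strategy --- classifying $a'$, $b'$ and the remaining boundary vertices of $f$ using Observation \ref{2connected}, Lemma \ref{characterizationh} and the budget inequalities (5)--(6) --- is indeed the intended one. But there is a genuine gap at the very first step: you claim that Lemma \ref{intersectatoneedge} forces $P(a,b)$ to have length one. That lemma is about two \emph{faces} of $F$; here $F^{*}$ is a fragment with possibly several inner faces, so $f$ may share one edge with each of several consecutive faces of $F^{*}$, and $P(a,b)$ can have length $2$ or more. This is not a technicality: the paper applies Lemma \ref{characterizationf1} precisely in such situations (e.g.\ the hexagon with boundary $D^{*}D^{*}D^{*}AD_{0}A$ in the proof of Lemma \ref{no7edgecut}, and the neighboring faces of the components $D05$, $D08$, $D09$ with $|\nabla(F^{*})|=7$ or $9$). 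Moreover, conclusion ($i$), ``$a'b'\in E(h_{i})$'', requires $a'$ and $b'$ to be adjacent on $\partial(f)$, which is impossible when $f$ is a hexagon sharing only one edge with $F^{*}$; it arises exactly when $P(a,b)$ has length $|f|-3$. Under your assumption this case is vacuous, so your case analysis cannot produce all three alternatives of the second assertion, and it silently drops the configurations with longer shared paths (where, for instance, $a',b'\in A$ sharing a common $D_{0}$-neighbor occurs for a hexagon with $|P(a,b)|=2$).

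Separately, the argument for the first assertion is not in a verifiable state: the classification of the two remaining boundary vertices $x,y$ is started, retracted, and then concluded by assertion (``working through this, the only survivors are\dots''). The correct analysis there is short --- $x\in H$ forces $y\in H$ by $3$-regularity and then $f$ has two $E(A,H)$ edges plus the $E(\mathcal{D}^{*})$ edge $ab$, contradicting Lemma \ref{characterizationh}(5); $x\in A$ forces $y\in D_{0}$ (since $y\in A$ violates Ineq.\ (5), $y\in D^{*}$ violates Observation \ref{2connected} because $y$ would have both face-neighbors in $A$, and $y\in H$ violates Lemma \ref{characterizationh}(4)); and $x\in D^{*}$ forces $y\in D^{*}$ by $2$-connectedness --- but you need to actually carry it out, and you need to redo the whole second assertion allowing $P(a,b)$ of arbitrary length, with the intermediate vertices of $P(a,b)$ lying in $V(F^{*})$ and the vertices of $\partial(f)$ outside $P(a,b)$ classified one by one from $a'$ and $b'$ inward.
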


Regarding to the non-trivial factor-critical components of $F-(H\cup
A)$, we have the following stronger conclusion.

\begin{Lemma}\label{both} Let $F^{*} \in \cal D^*$  with
$|\nabla(F^*)|\geq7$. Then at least one of $h_{1},h_{2}$ is
incident to $F^{*}$. In particular, if $|\nabla(F^*)|=9$, then
both $h_{1}$ and $h_{2}$ are incident to $F^{*}$.
\end{Lemma}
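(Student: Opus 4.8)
The plan is to argue by contradiction: suppose $F^{*}\in\mathcal{D}^{*}$ has $|\nabla(F^{*})|\geq 7$ but neither $h_{1}$ nor $h_{2}$ is incident to $F^{*}$. By the Note following Lemma \ref{characterizationh}, no neighboring face of $h_{i}$ then contains an $E(F^{*})$ edge, so in particular no $E(\mathcal{D}^{*})$ edge of this component lies on a neighboring face of either grey hexagon. Since $F^{*}$ is 2-connected (Observation \ref{2connected}) and $|\nabla(F^{*})|\leq 7$ (recall $s(\mathcal{D})\leq 3$, and $|\nabla(F^{*})|=9$ is excluded because that case forces both $h_{1},h_{2}$ incident to $F^{*}$ by the convention made just before Lemma \ref{intersectingboth}), $F^{*}$ is a fragment of $F$ with exactly one face $f_{0}$ not a face of $F$. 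The idea is to count pentagons: every pentagon of $F$ meeting $D^{*}$ must meet this single component $F^{*}$, and by Ineq. (8) a pentagonal $F^{*}$ contributes $p(V(F^{*}))\leq 3$, so we will need to show the other contributions to Ineq. (7) are too small.

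First I would pin down $F^{*}$ more precisely. With $|\nabla(F^{*})|\in\{5,7\}$: if $|\nabla(F^{*})|=5$ then $F^{*}$ is a pentagon by Proposition \ref{5edgecut}, and if $|\nabla(F^{*})|=7$ then $s(\mathcal{D})\geq 2$, which by Ineqs. (5),(6) forces $h_{1},h_{2}$ non-incident, $s(\mathcal{D})=2$, and $|E(A,H)|=|E(A,A)|=0$; then $F^{*}$ is the unique nontrivial component and by Lemma \ref{aforest} (applied to the fragment $F^{*}$ with its $|W|=7-0=\ $ boundary $2$-vertices cut down appropriately — here using that the tree $T=F-(V(F^{*})\setminus W)$ is constrained) one sees $F^{*}$ is a small pentagonal fragment built from at most a few pentagons. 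Next I would examine the neighboring faces of $F^{*}$ using Lemma \ref{characterizationf} and Lemma \ref{characterizationf1}: since no $E(F^{*})$ edge lies on a neighboring face of $h_{1}$ or $h_{2}$, every neighboring face $f$ of $F^{*}$ has $v_{1}',v_{2}'\in A$ (the cases where one of them is in $H$ are exactly the cases (i),(ii),(v) of Lemma \ref{characterizationf} and their analogues, all of which put an $E(F^{*})$ edge on a face meeting $h_{i}$). So around $F^{*}$ all the ``outward'' vertices lie in $A$, and the faces are of types (iii),(iv),(vi) or the $D^{*}D^{*}AD^{*}D^{*}A$/$D^{*}D^{*}AAD_{0}A$ types from Lemma \ref{characterizationf1}.

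Now I would count the edges from $F^{*}$ into $A$: since $|\nabla(F^{*})|\leq 7$ and every edge of $\nabla(F^{*})$ runs to $A$ (no edges to $D_{0}$, none to $H$, and none to another component of $\mathcal{D}^{*}$ when $|\nabla(F^{*})|=7$), we get $|E(F^{*},A)|=|\nabla(F^{*})|$, hence by Ineq. (2)–(4) bookkeeping $|E(A,H)|+|E(A,A)|$ and the remaining slack are squeezed hard; in the $|\nabla(F^{*})|=7$ subcase this is already contradictory because it forces $|E(A,A)|=0$ yet the faces around $F^{*}$ of type (iii)/(iv)/$D^{*}D^{*}AD^{*}D^{*}A$ consume the $A$-vertices in a way that leaves some pentagon of $F$ outside $H\cup D^{*}$ with no $E(A,A)$ edge, violating (*). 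For the $|\nabla(F^{*})|=5$ subcase (pentagonal $F^{*}$), I would combine $p(D^{*})=p(V(F^{*}))\leq 3$ with $p(H)\leq p(V(h_{1}))+p(V(h_{2}))\leq 8$ and a bound on $p(E(A,A))$ coming from the scarcity of $E(A,A)$ edges (at most $|E(A,A)|\leq 3$, and each such edge lies in at most $2$ pentagons but Lemma \ref{intersectatoneedge}/adjacency constraints typically cut this), aiming to beat Ineq. (7) $p(H)+p(E(A,A))+p(D^{*})\geq 12$; the delicate point is showing $p(H)$ is actually smaller, which comes from the fact that the pentagonal $F^{*}$ sitting among neighboring faces of $h_{i}$ that are themselves pentagons would create a copy of $L$ or $R$, forcing at least one neighboring face of each $h_{i}$ to be a hexagon and hence $p(V(h_{i}))\leq 3$.

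\textbf{Main obstacle.} I expect the hard part to be the $|\nabla(F^{*})|=5$, $F^{*}$ a pentagon case: unlike the $|\nabla(F^{*})|=7$ case, here there is real slack ($|E(A,H)|+|E(A,A)|+s(\mathcal{D})$ can be as large as $3$), so the contradiction cannot come from pure edge-counting alone and must exploit the global structure of the neighboring faces of $h_{1}, h_{2}, F^{*}$ together with the ``no $L$, no $R$'' hypothesis to force enough neighboring hexagons that the pentagon count in Ineq. (7) fails. Carefully enumerating which faces around $F^{*}$ and around $h_{1},h_{2}$ can simultaneously be pentagons without creating $L$ or $R$, and checking that no such configuration extends to a fullerene avoiding the eleven exceptional graphs, is the technical heart; I would organize it as a short case analysis on how the $\leq 3$ contributing edges outside $E(\mathcal{D}^{*})$ are distributed, reusing Lemmas \ref{intersectingboth}, \ref{characterizationh}, \ref{characterizationf} at each step.
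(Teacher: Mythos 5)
There are genuine gaps here, starting with a misreading of the hypothesis. The lemma concerns $|\nabla(F^*)|\in\{7,9\}$, yet you reduce to ``$|\nabla(F^*)|\in\{5,7\}$'' and devote your entire ``main obstacle'' discussion to a pentagonal component with $|\nabla(F^*)|=5$, which is outside the statement (and is the subject of the later Lemma \ref{no5edgecut}, a different claim). Worse, you dispose of the $|\nabla(F^*)|=9$ case by appeal to ``the convention made just before Lemma \ref{intersectingboth}.'' That convention is only a choice of plane embedding --- it declares which face of $F^*$ is called the exterior and records that $h_1,h_2$ both lie there; it says nothing about $E(h_j,F^*)\neq\emptyset$, which is what ``incident'' means in this paper. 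So the second assertion of the lemma is simply not proven. The correct argument is short: $s(F^*)=3$ saturates Ineqs.\ (5)--(6), so the \emph{only} contributing edges in $F$ are the $E(F^*)$ edges; Corollary \ref{atleastoneedge} then forces each $h_j$ to have a neighboring face containing an $E(F^*)$ edge, and the first part of Lemma \ref{characterizationf1} converts that into an edge between $h_j$ and $F^*$.

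For $|\nabla(F^*)|=7$ your bookkeeping is also wrong: from $s(\mathcal{D})\geq s(F^*)=2$ and Ineq.\ (5) you conclude $|E(A,H)|=|E(A,A)|=0$, but the bound is $3$, so exactly one unit of slack remains --- there can be one $E(A,H)$ edge, or one $E(A,A)$ edge, or one $E(h_1,h_2)$ edge (with $h_1,h_2$ incident under Ineq.\ (6)), or a second pentagonal component $F_1^*$. The heart of the paper's proof is precisely the four-way case analysis on where this spare contributing edge sits: the $E(h_1,h_2)$ case creates $L$ via Lemmas \ref{intersectingboth} and \ref{characterizationh}(2); the $E(A,H)$ case leaves one of $h_1,h_2$ with no contributing edge in its neighboring faces, contradicting Corollary \ref{atleastoneedge}; and the $E(A,A)$ and $F_1^*$ cases force a common neighboring hexagon $HHD_0HHD_0$ of $h_1$ and $h_2$ (to avoid $L$), which Lemma \ref{equalemptyset} rules out. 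Your substitute --- that the faces around $F^*$ ``consume the $A$-vertices in a way that leaves some pentagon\dots violating (*)'' --- is not an argument, and your invocation of Lemma \ref{aforest} cannot work either, since a $2$-connected component with $|\nabla(F^*)|=7$ has seven $2$-degree boundary vertices, outside that lemma's range $|W|\leq 4$.
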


\begin{proof} Suppose that $h_{i}$ is not incident
to $F^{*}$ for some $i\in \{1,2\}$. By the first part of Lemma \ref{characterizationf1} we have that  the neighboring faces of $h_{i}$ contain no
$E(F^{*})$ edges. On the other hand,  there must exist  contributing edges contained in  the neighboring faces of each of $h_{1}$ and $h_{2}$ by Corollary
\ref{atleastoneedge}.  So $h_i$ has a neighboring face with a contributing edge not in $E(F^*)$.

If $|\nabla(F^*)|=9$,  Ineqs. (5) and (6) imply  that  $F$ has only contributing $E(F^*)$ edges.  Hence both $h_{1}$ and $h_{2}$ have a neighboring face with $E(F^*)$ edge.  This contradiction shows that
both $h_{1}$ and $h_{2}$ are incident to $F^{*}$.

In the following suppose $|\nabla(F^*)|=7$.  Suppose to the contrary that none of $h_{1}$ and $h_2$ are  incident
to $F^{*}$.  Then  $s(\mathcal{D})\geq s(F^{*})=2$.  Ineqs. (5) and (6)
imply exactly one additional contributing edge in
$E(h_{1},h_{2})$,  $E(A,H)$,   $E(A,A)$),  or one
 $F_{1}^{*}\in \cal D^*$ with
$|\nabla(F_{1}^{*})|=5$. For one
$E(h_{1},h_{2})$ edge existence, we can know $h_{1}$ and
$h_{2}$ are incident and all of the neighboring faces of $h_{1}$ and
$h_{2}$ are pentagons by Lemmas \ref{intersectingboth} and
\ref{characterizationh}(2). Immediately the subgraph $L$ occurs in
$F$, contradicting the assumption. For one $E(A,H)$ edge existence,
the neighboring faces of $h_{i}$ for some $i\in \{1,2\}$ don't
contain contributing edges, contradicting  Corollary
\ref{atleastoneedge}. For one $E(A,A)$ edge existence, the $E(A,A)$
edge must be contained in one neighboring face of each  of  $h_{1}$ and
$h_{2}$ , which is   a hexagon by Lemma \ref{characterizationh}(1). Then
the remaining five neighboring faces of $h_{i}$ for $i\in \{1,2\}$
are either pentagons or hexagons intersecting both $h_{1}$ and
$h_{2}$ by Lemma \ref{intersectingboth}. So one of the remaining
five neighboring faces of $h_{i}$ must be a hexagon intersecting both $h_{1}$ and
$h_{2}$ in order to avoid the occurrence of the forbidden subgraph
$L$, which is impossible by Lemma \ref{equalemptyset}. For
$F_{1}^{*}$ existence, $F_1^*$ is a pentagon by Prop. \ref{5edgecut}, and both $h_{1}$ and $h_{2}$ are incident to
$F_{1}^{*}$  and four neighboring faces of $F_{1}^{*}$ are hexagons
with the boundaries $D^{*}D^{*}HHD_{0}A$ by Lemma
\ref{characterizationh}(1). By Lemma \ref{intersectingboth},  $h_{1}$ and $h_{2}$ have a common neighboring face $HHD_0HHD_0$ in order to prevent
the occurrence of $L$, again contradicting Lemma
\ref{equalemptyset}.
\end{proof}

\begin{cor}\label{both1} Let $F^{*}\in \cal D^*$ with $|\nabla(F^*)|=7$. Assume that $F^{*}$ is not
incident to $h_{i}$ for some $i\in\{1,2\}$. Then both $h_{i}$ and
$h_{3-i}$ have a common
neighboring face with the boundary $HHD_{0}HHD_{0}$ and one of the
remaining five neighboring faces of $h_{i}$ contains either an
$E(A,V(h_{i}))$ edge such that $p(V(h_{i}))\leq3$ or an $E(A,A)$
edge with the boundary $HHD_{0}AAD_{0}$ or an $E(F_{1}^{*})$ edge
such that $F_{1}^{*}\in \cal D^*$ is a pentagon and
$p(V(h_{i}))\leq3$.
\end{cor}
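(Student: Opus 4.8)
This corollary repackages the ``$|\nabla(F^{*})|=7$'' part of the proof of Lemma \ref{both}, but now only $h_{i}$ is assumed not incident to $F^{*}$; the configurations that produced contradictions there (where neither $h_{1}$ nor $h_{2}$ was incident to $F^{*}$) become the valid outcomes here. First, since $|\nabla(F^{*})|=7\ge 7$, Lemma \ref{both} forces $F^{*}$ incident to $h_{3-i}$, so $s(\mathcal{D})\ge s(F^{*})=2$. Since $h_{i}$ is not incident to $F^{*}$, no neighboring face of $h_{i}$ has an $E(F^{*})$ edge (the Note following Lemma \ref{characterizationh}; equivalently the first part of Lemma \ref{characterizationf1}), whereas Corollary \ref{atleastoneedge} supplies a neighboring face $f$ of $h_{i}$ with a contributing edge $e$; hence $e$ is an $E(A,H)$, $E(A,A)$ or $E(h_{1},h_{2})$ edge, or $e\in E(F_{1}^{*})$ for some $F_{1}^{*}\in\mathcal{D}^{*}\setminus\{F^{*}\}$.

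Second, combining $s(\mathcal{D})\ge 2$ with Ineqs. (5) and (6) leaves at most one ``unit'' of slack, so, enumerating, either (a) $h_{1}$ and $h_{2}$ are incident, $e\in E(h_{1},h_{2})$ and all other counts vanish; or, with $h_{1}$ and $h_{2}$ not incident, (b) $e$ is the unique $E(A,H)$ edge; or (c) $e$ is the unique $E(A,A)$ edge; or (d) $E(A,H)=E(A,A)=\emptyset$ and there is a single further component $F_{1}^{*}$ with $|\nabla(F_{1}^{*})|=5$, which is a pentagon by Proposition \ref{5edgecut} and to which $h_{i}$ is necessarily incident. Case (a) is impossible: by Lemmas \ref{intersectingboth}, \ref{characterizationh}(2) and \ref{intersectatoneedge} (three pairwise adjacent faces share a vertex, so two incident hexagons cannot have a common hexagonal neighbor) all six neighboring faces of $h_{i}$ become pentagons, an occurrence of $L$. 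The possibility that no additional contributing edge exists is also impossible, as then $h_{i}$ would have no neighboring face with a contributing edge, contrary to Corollary \ref{atleastoneedge}.

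Third, in each of (b), (c), (d) the face $f$ has exactly one contributing edge, so Lemma \ref{characterizationh}(1) pins $\partial(f)$ to $HHAD_{0}AD_{0}$, $HHD_{0}AAD_{0}$, or $HHD^{*}D^{*}AD_{0}$ respectively, and this is the face carrying the $E(A,V(h_{i}))$, $E(A,A)$, or $E(F_{1}^{*})$ edge demanded. The neighboring faces of $h_{i}$ that carry no contributing edge are pentagons $HHD_{0}AD_{0}$ or hexagons $HHD_{0}HHD_{0}$ adjacent to both $h_{1}$ and $h_{2}$ (Lemma \ref{intersectingboth}), at most one of the latter (Lemma \ref{atmostoneneighboring}); if none of them were a hexagon, $h_{i}$ would have at least four consecutive neighboring pentagons, forcing $L$. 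Hence exactly one is a hexagon $HHD_{0}HHD_{0}$, the asserted common neighboring face of $h_{i}$ and $h_{3-i}$.

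Finally, the bound $p(V(h_{i}))\le 3$ in (b) and (d): in (b) the $E(A,H)$ edge $e$ joins a vertex $v\in V(h_{i})$ to $A$, so the two faces through $e$ at $v$ are both neighboring faces of $h_{i}$, and each, having $e$ as its unique contributing edge, is a hexagon by Lemma \ref{characterizationh}(1); in (d) the edge realizing the incidence of $h_{i}$ with the pentagon $F_{1}^{*}$ likewise forces, at the relevant vertex of $V(h_{i})$, two consecutive neighboring faces of $h_{i}$ carrying $E(F_{1}^{*})$ edges, again hexagons by Lemma \ref{characterizationh}(1). Thus in (b) and (d) the hexagon $f$ has a consecutive hexagonal companion on $\partial(h_{i})$; together with the common neighbor $HHD_{0}HHD_{0}$ this leaves at most three pentagonal neighbors of $h_{i}$, and since every pentagon meeting $V(h_{i})$ is a neighboring face of $h_{i}$ we get $p(V(h_{i}))\le 3$. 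In (c) the $E(A,A)$ edge has both ends in $A$, so no such consecutive hexagonal pair is produced, which is why no bound is claimed there. The main obstacle is precisely this last accounting: one must check, in (b) and (d), that the ``second'' face through the contributing edge is genuinely a neighboring face of $h_{i}$ with a single contributing edge (so that Lemma \ref{characterizationh}(1) applies), and that no extra incidence of $h_{i}$ with $F_{1}^{*}$ in (d) — nor the mate of the $E(A,A)$ edge in (c) — slips in and lets $h_{i}$ avoid $L$ without a common $HHD_{0}HHD_{0}$ neighbor with $h_{3-i}$.
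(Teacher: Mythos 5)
Your proposal is correct and follows essentially the same route as the paper: invoke Lemma \ref{both} to get $h_{3-i}$ incident to $F^{*}$ and hence $s(\mathcal{D})\geq 2$, use Ineqs.~(5)--(6) plus Corollary \ref{atleastoneedge} to isolate exactly one further contributing edge of type $E(A,V(h_i))$, $E(A,A)$ or $E(F_1^{*})$ in the neighboring faces of $h_i$ (ruling out $E(h_1,h_2)$ via the forbidden $L$), classify the carrying face(s) by Lemma \ref{characterizationh}(1), and force the $HHD_0HHD_0$ common neighbor via Lemma \ref{intersectingboth} to avoid $L$. You merely spell out more explicitly than the paper why the $E(A,V(h_i))$ and $E(F_1^{*})$ edges each produce two adjacent hexagonal neighbors (whence $p(V(h_i))\leq 3$), which the paper compresses into a one-line appeal to Lemmas \ref{characterizationh}(1) and \ref{lessthan1}.
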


\begin{proof}From  Lemma  \ref{both} and its proof we can know if $h_{i}$ for $i\in \{1,2\}$ is not
incident to $F^{*}$  with $|\nabla(F^*)|=7$, then $h_{3-i}$ is
incident to $F^*$ and $h_1$ and $h_2$ are not incident. There is
exactly one  $E(A,V(h_{i}))$ or $E(A,A)$ or $E(F_{1}^{*})$ edge,
contained in the neighboring faces of $h_{i}$. Note that an
$E(A,V(h_{i}))$ (or $E(F_{1}^{*})$) edge gives rise to two adjacent
hexagonal neighboring faces of $h_{i}$ by Lemma
\ref{characterizationh}(1).  For $E(F_{1}^{*})$) edge case, $h_i$ is
connected by exactly one edge by Lemma \ref{lessthan1}. Hence the
other four  neighboring faces of $h_i$ have no contributing edges.
Thus in order to prevent the occurring of the forbidden subgraph $L$
one of the neighboring faces of $h_{i}$ must be a hexagon
$HHD_{0}HHD_{0}$ intersecting both $h_{1}$ and $h_{2}$ by Lemma
\ref{intersectingboth}.  For $E(A,A)$ edge case, the proof
is similar.
\end{proof}

By applying the above preliminary results we can obtain the following three critical lemmas. But their proofs will be presented  in Sections 4, 5 and 6, respectively.

\begin{Lemma}\label{no9edgecut} There is no  $F^{*}\in\cal D^*$  with $|\nabla(F^*)|=9$.
\end{Lemma}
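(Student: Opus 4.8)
The plan is to argue by contradiction: suppose some $F^{*}\in\mathcal{D}^{*}$ has $|\nabla(F^{*})|=9$. By Ineqs.\ (5) and (6) this forces $s(\mathcal{D})=3$, so $F^{*}$ is the unique non-trivial factor-critical component, there are no $E(A,H)$, $E(A,A)$ or $E(h_1,h_2)$ edges at all, and the only contributing edges in the whole graph are the edges of $E(F^{*})$. By Lemma \ref{both}, both $h_1$ and $h_2$ are incident to $F^{*}$, and by the convention made earlier both $h_1$ and $h_2$ lie in the exterior face of $F^{*}$; moreover $F^{*}$ has at most two faces that are not faces of $F$, and by that same convention we may take $F^{*}$ to be a pentagonal fragment (a cycle plus its interior of pentagons) together with a single ``outer'' non-face region containing both $h_1$ and $h_2$. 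So $F^{*}$ is a pentagonal fragment with exactly nine $2$-degree boundary vertices, and every neighboring face of $F^{*}$ other than $h_1,h_2$ carries no contributing edge other than possibly the $E(F^{*})$ edge along which it meets $F^{*}$.

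First I would pin down the combinatorial type of the fragment $F^{*}$. Since $|\nabla(F^{*})|=9$ and $F^{*}$ is $2$-connected with all inner faces pentagons, an Euler/degree count on the fragment bounds the number of pentagons it can contain (each interior pentagon of such a fragment is forced by the small number of $2$-degree boundary vertices), so $F^{*}$ has only a handful of pentagons — in fact the forbidden subgraphs $L$ (four pentagons) and $R$ (three pentagons) severely restrict which pentagonal fragments can appear. I would enumerate the pentagonal fragments with exactly nine boundary $2$-degree vertices and no $L$ or $R$; this is where Lemma \ref{aforest} is not directly available (it needs $|W|\le 4$), so instead I would use the classification of degenerated cyclic edge-cuts: a $9$-edge-cut analysis, or repeated use of Theorem \ref{cyclic5}–\ref{cyclic7} plus $c\lambda(F)=5$, to list the possible small pentagonal pieces. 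For each candidate fragment I would then examine its neighboring faces. Using Lemma \ref{characterizationf} and Lemma \ref{characterizationf1} (with no $E(A,A)$, $E(A,H)$, $E(h_1,h_2)$ edges available, only options involving $E(\mathcal{D}^{*})$ edges and vertices in $V(h_i)$, $A$, $D_0$ survive), every neighboring face of $F^{*}$ is forced into one of very few boundary patterns, essentially hexagons of type $D^{*}D^{*}HHD_0A$ or $D^{*}D^{*}AD_0AD_0$-like patterns, each pentagon of $F^{*}$ contributing boundary edges to $h_1$ or $h_2$ or to an all-$A/D_0$ region.

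Next I would push the pentagon count to a contradiction with Ineq.\ (7), exactly as in the proofs of Lemma \ref{characterizationh}(3)–(5) and Lemma \ref{characterizationf}. Because $D^{*}=V(F^{*})$ and there are no $E(A,A)$ edges, Ineq.\ (7) reads $p(H)+p(V(F^{*}))\ge 12$, i.e.\ $p(V(h_1))+p(V(h_2))+p(V(F^{*}))\ge 12$. The constraint from $L$ and $R$ gives $p(V(F^{*}))\le 3$ when $F^{*}$ is a pentagon and, more generally, a small bound on $p(V(F^{*}))$ for the enumerated fragments; and the fact that both $h_1$ and $h_2$ are incident to $F^{*}$ and must each have a hexagonal neighboring face (to avoid $L$) gives $p(V(h_j))\le 3$ for $j=1,2$ in the surviving configurations, by the arguments already used (a neighboring hexagon $HHD_0HHD_0$ of $h_j$, forced by Lemma \ref{intersectingboth} and Lemma \ref{equalemptyset}, blocks one pentagon). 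Summing, $p(H)+p(V(F^{*}))\le 3+3+3=9<12$, contradicting Ineq.\ (7). I would also need to rule out the degenerate possibility that $F^{*}$ has two non-face regions; but the convention established just before Lemma \ref{intersectingboth} together with $c\lambda(F)=5$ already forces both $h_1,h_2$ into one region, and a second non-face region would create a cyclic edge-cut of size $\le 4$, impossible.

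The main obstacle I anticipate is the enumeration step: cleanly classifying the pentagonal fragments $F^{*}$ with $|\nabla(F^{*})|=9$, no $L$, no $R$, and checking that in each case both incident hexagons force $p(V(h_j))\le 3$. The $2$-degree boundary vertices are too many ($|W|=9$) for Lemma \ref{aforest}, so the bookkeeping on $\partial(F^{*})$ — how the nine outward edges distribute among $h_1$, $h_2$, and $A/D_0$-regions, and how the degree-saturated paths on $\partial(F^{*})$ interleave — is the delicate part. Once the finitely many fragments are in hand, each is dispatched by the same pentagon-counting inequality, so the difficulty is entirely in making the case analysis exhaustive and in verifying that no configuration escapes the $L/R$-freeness hypothesis.
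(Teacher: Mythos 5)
Your setup is correct and matches the paper: Ineqs.\ (5)--(6) force $s(\mathcal{D})=3$, so $F^{*}$ is the only non-trivial component, $E(A,A)=E(A,H)=E(h_1,h_2)=\emptyset$, and Lemma \ref{both} puts both $h_1$ and $h_2$ incident to $F^{*}$. But from there the proposal goes off track in two ways that I think are fatal. First, you assume $F^{*}$ is a \emph{pentagonal} fragment. Nothing forces this: $F^{*}$ is a $2$-connected factor-critical subgraph with $|\nabla(F^{*})|=9$, and its inner faces may well include hexagons (Prop.\ \ref{5edgecut} gives ``pentagon'' only in the $|\nabla(F^{*})|=5$ case). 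So the enumeration you propose is over the wrong class of objects and would miss most candidates; moreover you never actually carry it out, and you concede it is the ``delicate part'' --- but it is in fact the entire content of the lemma. Second, your closing inequality $p(H)+p(V(F^{*}))\le 3+3+3=9<12$ rests on two unjustified bounds: Ineq.\ (8) gives $p(V(F^{*}))\le 3$ only when $F^{*}$ is a single pentagon, and a component with a $9$-edge neighborhood can meet far more than three pentagons; likewise $p(V(h_j))\le 3$ does not follow from $L$-freeness alone (that only gives $\le 4$, and forcing a third hexagonal neighbor of $h_j$ is exactly the kind of structural fact that needs proof). So the counting contradiction with Ineq.\ (7) does not go through.

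The paper's proof takes a different and more local route that you should compare against: it sets $k=|\nabla(h_j)\cap\nabla(F^{*})|$ and introduces \emph{clusters} at $h_j$ (maximal runs of $\nabla(F^{*})$-edges whose interior edges lie in $\nabla(h_j)$), proves that clusters at a fixed $h_j$ are pairwise disjoint and that clusters at $h_1$ and $h_2$ can share at most one edge, and then runs a case analysis on $k=1$, $k=2$, $k\ge 3$. Each case is killed not by pentagon counting but by planarity obstructions: a forced triangular or quadrangular face, a degree-saturated path longer than a face of $F$ can accommodate, or an edge count exceeding $|\nabla(F^{*})|=9$, together with Lemmas \ref{intersectingboth}, \ref{characterizationf1} and \ref{aforest} to pin down the shapes of the neighboring faces as the configuration is grown outward. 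That bookkeeping on how the nine edges of $\nabla(F^{*})$ distribute between $\nabla(h_1)$, $\nabla(h_2)$ and the rest is the missing idea in your proposal.
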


\begin{Lemma}\label{no7edgecut} There is no $F^{*}\in\cal D^*$ with $|\nabla(F^*)|=7$.
\end{Lemma}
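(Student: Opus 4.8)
\textbf{Proof plan for Lemma \ref{no7edgecut}.}

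The plan is to argue by contradiction: assume some $F^{*}\in\mathcal{D}^{*}$ has $|\nabla(F^{*})|=7$, so $s(F^{*})=2$ and hence $s(\mathcal{D})\geq 2$. Feeding this into Ineqs. (5) and (6) already forces $h_1$ and $h_2$ to be non-incident (so $|\nabla(H)|=12$), pins $|E(A,H)|+|E(A,A)|\leq 1$, and leaves essentially one ``spare'' unit of slack; by Lemma \ref{both} at least one of $h_1,h_2$ is incident to $F^{*}$, and I would split into the case where both are incident to $F^{*}$ and the case where exactly one, say $h_2$, is incident while $h_1$ is not. Since $|\nabla(F^{*})|=7$ with $W$ the set of $2$-degree boundary vertices of the associated fragment satisfying $|W|\le 4$, Lemma \ref{aforest} tells us $F^{*}$ is a fragment with exactly one non-$F$ face, and I would enumerate the small pentagonal fragments with a $7$-edge boundary (these are exactly the components listed in Theorem \ref{cyclic7}/Fig. 5 that are realizable here, but more directly one can use Lemma \ref{aforest}(3): the tree $T=F-(V(F^{*})\setminus W)$ is two $K_2$'s, a $3$-path, or $T_0$), thereby getting a short list of candidate shapes for $F^{*}$ together with the way $A\cup H$ attaches along its seven outgoing edges.

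For the sub-case where $h_1$ is not incident to $F^{*}$, Corollary \ref{both1} is the main lever: it says $h_1$ and $h_2$ share a common neighboring hexagon $HHD_0HHD_0$, and one of the remaining five neighboring faces of $h_1$ carries either an $E(A,V(h_1))$ edge (with $p(V(h_1))\le 3$), or an $E(A,A)$ edge in a face $HHD_0AAD_0$, or an $E(F_1^{*})$ edge with $F_1^{*}$ a pentagon and $p(V(h_1))\le 3$. In each of these three situations I would count pentagons via the master inequality (7): we have $p(H)+p(E(A,A))+p(D^{*})\ge 12$, and using $p(D^{*})\le p(V(F^{*}))+p(V(F_1^{*}))$ with $F^{*}$ a small fragment ($7$ boundary edges bounds its pentagon count), $p(E(A,A))\le 2$, and the reduced $p(V(h_1))\le 3$, the right side falls short of $12$ — exactly the style of contradiction already used repeatedly in Lemma \ref{characterizationh}(3)--(5) and at the end of Lemma \ref{characterizationf}. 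The delicate point is bounding $p(V(F^{*}))$ and $p(D^{*})$ tightly; I would use Lemma \ref{aforest}(3) and the fact (the Note after Lemma \ref{characterizationh}) that a hexagon not incident to $F^{*}$ has no $E(F^{*})$ edges on its neighboring faces, plus Lemma \ref{equalemptyset}, to control how $F^{*}$ can touch the twelve pentagons.

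For the sub-case where both $h_1$ and $h_2$ are incident to $F^{*}$, I would use Lemmas \ref{characterizationf}, \ref{characterizationf1} to classify the neighboring faces of $F^{*}$ along its boundary: each is forced into one of a handful of boundary types ($D^{*}D^{*}HHD_0A$, $D^{*}D^{*}AD_0A$ giving $a',b'$ sharing a $D_0$-neighbor, $D^{*}D^{*}AD^{*}D^{*}A$, etc.), and since $F$ has no subgraph $L$ or $R$ the pentagons around $h_1$ and $h_2$ cannot all be consecutive, which again forces a hexagon $HHD_0HHD_0$ common to $h_1,h_2$ and runs into Lemma \ref{equalemptyset}, or forces $p(V(h_1)),p(V(h_2))\le 3$ and a shortfall in (7). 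The main obstacle, and where most of the real work lies, is the case analysis of the shape of the $7$-edge-bounded fragment $F^{*}$ together with the precise attachment of $A$ and $H$: there are several fragment shapes and, for each, a few ways the $\le 1$ extra contributing edge can sit, so the bookkeeping of $p(H)+p(E(A,A))+p(D^{*})$ must be done carefully for each; but in every branch the combination ``$L,R$ forbidden $\Rightarrow$ a $HHD_0HHD_0$ hexagon $\Rightarrow$ Lemma \ref{equalemptyset}'' or ``reduced $p$-counts $\Rightarrow$ violate (7)'' closes the argument.
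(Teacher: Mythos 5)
There are two genuine gaps. First, your method for enumerating the possible shapes of $F^{*}$ does not work: Lemma \ref{aforest} requires $0<|W|\leq 4$, but a component with $|\nabla(F^{*})|=7$ has exactly seven $2$-degree vertices on its boundary, so the lemma cannot be applied to $F^{*}$ (nor to its complement) to list the candidate fragments. The paper instead leans on the heavy classification machinery: since $\nabla(F^{*})$ is a cyclic $7$-edge-cut, the degenerated case is handled by Theorem \ref{cyclic7} (reducing, after discarding shapes containing $L$ or $R$ and shapes whose complement cannot hold two disjoint hexagons, to the components $D05$, $D08$, $D09$, $D11$ of Fig. 5), and the non-degenerated case is handled by Theorem \ref{obtainedbytrivialones} via an explicit generation table, leaving only $ND03$. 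Without some such classification your ``short list of candidate shapes'' has no justification. A smaller inaccuracy of the same kind: Ineq. (6) with $s(\mathcal{D})\geq 2$ does not force $h_{1}$ and $h_{2}$ to be non-incident; it only forces $E(A,H)=E(A,A)=\emptyset$ in the incident case, and the paper has to kill the incident configurations separately (typically by exhibiting an $L$).

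Second, and more seriously, your claim that ``in every branch'' the argument closes with an $L$/$R$ obstruction, Lemma \ref{equalemptyset}, or a shortfall in Ineq.~(7) is false. Several branches of the case analysis terminate not in a contradiction but in the complete construction of one of the fullerene graphs excluded in the hypothesis of Theorem \ref{lr}: the paper obtains $F_{48}^{1}$, $F_{44}^{1}$, $F_{44}^{2}$, $F_{46}^{1}$ in Claim 1 and $F_{46}^{2}$, $F_{48}^{2}$, $F_{48}^{3}$, $F_{46}^{4}$, $F_{48}^{4}$ in Claim 2, and these are ruled out only because they appear in Fig.~2. The lemma is simply not true for an arbitrary $L$,$R$-free fullerene graph, so any proof must at some point build these exceptional graphs and invoke their exclusion; a plan that promises a uniform contradiction in every branch cannot be completed. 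The general toolkit you cite (Lemmas \ref{both}, \ref{characterizationf}, \ref{characterizationf1}, Corollary \ref{both1}, and the pentagon count (7)) is indeed what the paper uses inside each case, but the two missing ingredients above are essential to the structure of the proof.
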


\begin{Lemma}\label{no5edgecut} There is no $F^{*}\in\cal D^*$ with $|\nabla(F^*)|=5$.
\end{Lemma}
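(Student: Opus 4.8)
The plan is to derive a contradiction from the existence of a component $F^{*}\in\mathcal{D}^{*}$ with $|\nabla(F^{*})|=5$, which by Proposition~\ref{5edgecut} must be a pentagon. So assume such an $F^{*}$ exists; since $s(\mathcal{D})\geq s(F^{*})=1$, Ineqs.~(5) and (6) leave very little room: at most two more units of ``budget'' for $E(A,H)$, $E(A,A)$, $E(h_{1},h_{2})$ edges and for further components of $\mathcal{D}^{*}$. In particular, by Lemma~\ref{no9edgecut} and Lemma~\ref{no7edgecut} every component of $\mathcal{D}^{*}$ has $|\nabla|=5$, hence is a pentagon, and there are at most two or three of them depending on whether $h_1,h_2$ are incident. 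First I would pin down how $F^{*}$ sits relative to $h_1$ and $h_2$: applying Lemma~\ref{characterizationf} to the five neighboring faces of $F^{*}$, together with the constraint that all twelve pentagons of $F$ must be ``hit'' by $H\cup D^{*}$ or an $E(A,A)$ edge (property~(*) and Ineq.~(7)), and the bound $p(V(h_j))\leq 4$, $p(V(F^{*}))\leq 3$ from Ineq.~(8).

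The core counting argument is the following. We need $p(H)+p(E(A,A))+p(D^{*})\geq 12$. With $s(\mathcal{D})\geq 1$ coming from $F^{*}$, the leftover budget is tiny, so $p(E(A,A))$ is at most $1$ (one $E(A,A)$ edge hits at most two pentagons but costs a full unit, and two $E(A,A)$ edges in one face are forbidden by Lemma~\ref{characterizationh}(3)), and $p(D^{*})$ is a sum of $p(V(F_k^{*}))\leq 3$ over the at most two or three pentagonal components. So the heavy lifting must be done by $p(H)\leq 8$ (at most $4$ from each of $h_1,h_2$). This forces $h_1$ and $h_2$ to each be adjacent to four pentagons in a very rigid way, which is exactly the kind of configuration the forbidden subgraphs $L$ and $R$ were designed to exclude, or else forces $h_1,h_2$ to be incident (reducing the budget further via Ineq.~(6)). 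I would split into cases according to: (a) how many components are in $\mathcal{D}^{*}$ (one, two, or three), (b) whether $h_1$ and $h_2$ are incident, and (c) whether $F^{*}$ (and the other pentagonal components) is incident to one or both of $h_1,h_2$. In each case I would use Lemma~\ref{characterizationf} to read off the boundary types of the faces around $F^{*}$, Lemma~\ref{intersectingboth} and Corollary~\ref{atleastoneedge} to control the neighboring faces of $h_1$ and $h_2$ (in particular that avoiding $L$ forces a hexagonal neighboring face somewhere), and Lemma~\ref{equalemptyset}/Lemma~\ref{atmostoneneighboring} to rule out $h_1,h_2$ sharing the wrong common neighboring faces. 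Then I would close each case either by a direct contradiction to Ineq.~(7), or by actually reconstructing $F$ up to isomorphism and observing it is one of the eleven graphs in Fig.~2 (which contradicts our standing assumption that $F$ is none of them), or that it contains $L$ or $R$.

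In more detail on the reconstruction step: once the faces adjacent to $F^{*}$, $h_1$, and $h_2$ are all determined, the absence of non-trivial cyclic $5$-edge-cuts (Lemma~\ref{trivial}), $c\lambda(F)=5$, and the degenerated cyclic $6$- and $7$-edge-cut classifications (Theorems~\ref{cyclic6} and~\ref{cyclic7}) force the rest of $F$, since a pentagonal component $F^{*}$ with $|\nabla(F^{*})|=5$ together with the structure imposed on $A$ and $D_0$ gives a small cyclic edge-cut separating $F^{*}$ (and possibly a bit more) from the rest. I would identify which degenerated cut type occurs and glue in the matching complementary component, checking whether the result violates IPR via an $L$ or $R$ subgraph or lands in Fig.~2.

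The main obstacle I expect is the sheer number of subcases in the reconstruction: unlike the $|\nabla(F^{*})|=9$ case (where Ineqs.~(5),(6) kill essentially all other contributing edges) and the $|\nabla(F^{*})|=7$ case (where Corollary~\ref{both1} already narrows things down), here $F^{*}$ only uses one unit of budget, so there is genuine freedom — a second and even third pentagonal component, or an $E(A,A)$ or $E(A,H)$ edge, can coexist with $F^{*}$. Managing this bookkeeping while keeping the pentagon-count inequality~(7) tight is where the real work lies; the geometric lemmas do the local work, but assembling them into a complete case analysis without gaps is the delicate part. A secondary subtlety is making sure that when $F^{*}$ is incident to only one of $h_1,h_2$, the ``other side'' of $h_1,h_2$ is still forced (this is where Lemma~\ref{equalemptyset} and the $L$-avoidance arguments must be invoked carefully, exactly as in the proof of Corollary~\ref{both1}).
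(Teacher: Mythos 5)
Your strategic outline does match the paper's in spirit: the proof is indeed a budget-plus-pentagon-count argument built on Ineqs.~(5)--(7), Lemma~\ref{characterizationf}, Lemma~\ref{intersectingboth}, Corollary~\ref{atleastoneedge}, Lemma~\ref{equalemptyset} and $L/R$-avoidance, terminating in each branch either in a violation of Ineq.~(7) or in the excluded graph $F_{46}^{3}$ of Fig.~2. But what you have written is a plan, not a proof: the entire case analysis, which is the substance of the argument, is deferred to ``the delicate part.'' The paper also organizes the cases differently and more economically than you propose. Since $F^{*}$ is a pentagon, Ineq.~(8) gives $1\leq p(V(F^{*}))\leq 3$, and the proof splits into three claims according to $p(V(F^{*}))=3,2,1$, i.e.\ according to how many of the five neighboring faces of $F^{*}$ are pentagons (two, one, or none); moreover Claims~1 and~2 are proved first precisely so that later claims may assume every remaining pentagonal component hits few pentagons. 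Within each claim the enumeration runs over which vertices of $F^{*}$ are incident to $h_{1}$ or $h_{2}$. Your proposed top-level split by the number of components of $\mathcal{D}^{*}$ and by incidence of $h_{1},h_{2}$ would still have to be carried out inside every one of those vertex-incidence cases, so it does not reduce the work and gives no handle on the local face structure around $F^{*}$, which is what actually drives the contradictions.

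Two concrete defects beyond incompleteness. First, your bound ``$p(E(A,A))$ is at most $1$'' is wrong: a single $E(A,A)$ edge can lie in two pentagons, and with $s(\mathcal{D})\geq 1$ Ineq.~(5) still permits $|E(A,A)|=2$, so a priori $p(E(A,A))$ can be as large as $4$; the paper must argue configuration by configuration, via Lemma~\ref{characterizationh}(1),(3) and Observation~\ref{characterizationm2}, how many pentagons the $E(A,A)$ edges actually reach. Second, the proposed reconstruction via the degenerated cyclic $6$- and $7$-edge-cut classifications (Theorems~\ref{cyclic6} and~\ref{cyclic7}) is not how the exceptional graphs are pinned down here and is not obviously applicable: $\nabla(F^{*})$ is just a trivial cyclic $5$-edge-cut around a pentagon, and the larger cuts you would need are not guaranteed to be degenerated. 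The paper instead completes the local picture directly, using Lemma~\ref{aforest} and the face-boundary types of Lemmas~\ref{characterizationf} and~\ref{characterizationf1}, and that direct assembly is what produces $F_{46}^{3}$ in Claim~1.
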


We will  complete the proof by producing contradictions in all cases, where graph $F_{42}$ in Fig. 2 is excluded. In fact the other ten graphs are excluded in proving Lemmas \ref{no9edgecut} to \ref{no5edgecut}.

Before we have already shown that  $3\leq|\nabla(F^*)|\leq 9$ for
each $F^*\in\cal D$. Lemmas  \ref{no9edgecut} to \ref{no5edgecut}
imply that  every component $F^{*}$ in $\mathcal{D}$ sends out
exactly three edges. If $F^{*}$ is non-trivial, then $\nabla(F^*)$
forms a cyclic 3-edge-cut by Observation \ref{2connected},
contradicting that $c\lambda(F)=5$. So the components in
$\mathcal{D}$ are singletons and we have the following claim.

{\bf Claim 1}. $D^*=\emptyset$.

Then  $V(F)=H\cup A\cup D_{0}$ and
 $F-H-E(A,A)$ is bipartite. So each  pentagon of $F$
must contain a vertex in $H$ or an edge in $E(A,A)$. By (7) and (8) we have
$p(H)\leq8$, which means $p(E(A,A))\geq4$. On the other hand,
$p(E(A,A))\leq2|E(A,A)|\leq6$ as $|E(A,A)|\leq3$ by Ineq.
(5). So $4\leq p(E(A,A))\leq6$.

{\bf Claim 2.} $p(E(A,A))=4$.

\begin{proof}If $p(E(A,A))\geq5$, then $|E(A,A)|=3$, say
$e_{1},e_{2},e_{3}\in E(A,A)$. Moreover, at least two of
$e_{1},e_{2},e_{3}$, say $e_{1},e_{2}$, belong to two pentagons. In
other words, $e_{1},e_{2}$ cannot be contained in the neighboring
faces of $h_{1}$ or $h_{2}$ by Lemma \ref{characterizationh}(1) and
(3). Hence the neighboring faces of $h_{i}$ for some $i\in\{1,2\}$
do not include contributing  edges as $p(\{e_{3}\})\geq1$,
contradicting Corollary \ref{atleastoneedge}.
\end{proof}

{\bf Claim 3.} $|E(A,A)|=3$.

\begin{proof}By Claim 2 we have $|E(A,A)|\geq2$. If
$|E(A,A)|=2$, then each of these two $E(A,A)$ edges belongs to two
pentagons and none of these two $E(A,A)$ edges is contained in the
neighboring faces of $h_{j}$ for $j\in \{1,2\}$ by Lemma
\ref{characterizationh}(1) and (3). So there is exactly one other
contributing edge belonging to $E(A,H)$ or $E(h_{1},h_{2})$ in
$F$ by Corollary \ref{atleastoneedge} and Ineqs. (5) and (6). If this
additional contributing edge belongs to $E(A,H)$, then it is in the neighboring faces of exactly one of $h_1$ and $h_2$, contradicting  Corollary \ref{atleastoneedge}. Otherwise,  $h_{1}$ and $h_{2}$ are incident, we
can obtain a subgraph $L$ in $F$ applying
Lemmas \ref{characterizationh}(2) and \ref{intersectingboth} to the neighboring faces of
 $h_{1}$ and $h_{2}$ (In fact, we can obtain stronger result by Lemma \ref{aforest}: the neighboring faces of $h_i$ are pentagons), contradicting the assumption.
\end{proof}
Put  $\{e_{1},e_{2},e_{3}\}= E(A,A)$. Then we have the following
result.

\textbf{Claim 4.}  Each $e_{i}$, $i\in \{1,2,3\}$, cannot be the intersection of two
hexagonal faces of $F$ for $i\in \{1,2,3\}$.

\begin{proof}  Suppose to the contrary that   $e_{1}$ belongs to two
hexagons $f_{1}$ and $f_{2}$. Then none of $e_{2}$ and $e_{3}$ can
be contained in $f_{1}$ or $f_{2}$; Otherwise,
$p(E(A,A))\leq3$,
contradicting that $p(E(A,A))=4$. In other words, both $f_{1}$
and $f_{2}$ include only one $E(A,A)$ edge. Then they are the
neighboring faces of $h_{1}$ and $h_{2}$ by
Observation \ref{characterizationm2}. On the other hand, as
$p(E(A,A))=4$ and $p(\{e_{1}\})=0,p(\{e_{2}\})=p(\{e_{3}\})=2$,
which means the neighboring faces of $h_{1}$ and $h_{2}$ but
different from $f_{1}$ and $f_{2}$ contain no contributing edges by
Lemma \ref{characterizationh}(1) and (3). So in order to prevent the
subgraph $L$ occurring in $F$, by Lemma  \ref{intersectingboth} we have that one of the neighboring faces of
$h_{1}$ must be a hexagon intersecting  $h_{2}$,
which is impossible by Lemma \ref{equalemptyset}.
\end{proof}

 \textbf{Claim 5.}       Each  pentagonal face $f$ of $F$ contains at
               most one of $e_{1},e_{2},e_{3}$.

\begin{proof} Let $uvwxyu$ be the pentagon  $f$. If exactly two of $e_{1},e_{2},e_{3}$ are
contained in $f$, say $uv,vw\in E(A,A)$, then $x,y\in D_{0}$,
contradicting  that $D_0$ is independent in $F$. If all of $e_{1},e_{2},e_{3}$ are contained in $f$, say $uv,vw, wx\in E(A,A)$ and $y\in D_0$, then
the three neighboring faces of $f$ (containing $e_{1},e_{2},e_{3}$,
respectively) are pentagonal as $p(E(A,A))=4$.  Then $F$ includes $R$, a contradiction.
\end{proof}

From  Claims 2 and 3, we can know there is precisely one $E(A,A)$ edge
(say $e_{3}$) belonging to two pentagons. From Claim 4 each of $e_{1},e_{2}$
is the intersection of one pentagon and one hexagon. By Lemma
\ref{characterizationh}(1), $e_{3}$ cannot be contained in the
neighboring faces of $h_{1}$ and $h_{2}$. Since $p(V(h_{j}))=4$, at
least two neighboring faces of $h_{j}$ ($j\in \{1,2\}$) are
hexagons. Thus in order to form the hexagonal neighboring faces of
$h_{j}$ and $h_{3-j}$, $e_{1},e_{2}$ must be contained in the
neighboring faces of $h_{j}$ and $h_{3-j}$ (respectively) with the
boundaries $HHD_{0}AAD_{0}$ and there exists a neighboring face of
$h_{j}$ intersecting both $h_{j}$ and $h_{3-j}$ with the boundary
$HHD_{0}HHD_{0}$. More precisely, only two neighboring faces of
$h_{j}$ either containing the edge $e_{i}$ or intersecting both
$h_{j}$ and $h_{3-j}$ for $i,j\in \{1,2\}$ are hexagonal and the
remaining four are pentagonal with the boundaries $HHD_{0}AD_{0}$.
Hence the two hexagonal neighboring faces of $h_{j}$ cannot be
adjacent in order to prevent the subgraph $L$ occurring in $F$ for
$j\in \{1,2\}$. Denote by $f_{i}$($g_{i}$) ($1\leq i\leq6$) the
neighboring faces of $h_{j}$($h_{3-j}$) in clockwise
(anti-clockwise). Without loss of generality, suppose $f_{2}=g_{2}$
(see Fig.  12). By Lemma \ref{equalemptyset}, the $f_i$ and $g_j$
are disjoint,  $4\leq i,j\leq 6$, and
 $e_{1}$ belongs to some  $f_i$ and $e_2$  to some $g_j$ by the
above analysis.  So there are four cases for
 distributions of $e_{1},e_{2}$ on
$f_{4},f_{5},f_{6}$ and $g_{4},g_{5},g_{6}$ by symmetry (see Fig.
12(a),(b),(c),(d)). Let $f_{7}$ and $f_{8}$ be two faces of $F$
adjacent with $f_{3},f_{4},g_{3},g_{4}$ and
$f_{1},f_{6},g_{1},g_{6}$, respectively. If the case (a) or (b) or
(c) holds, then a subgraph $L$ consisting of
$f_{3},f_{4},g_{3},g_{4}$ occurs, a contradiction. If the case (d)
holds, then we have the configuration as shown in Fig.  12(d) by the
above discussions, where  $f_{7},f_{8}$ must be hexagons from Claim 5 as there is
no $R$ in $F$ and $v_{1},v_{2}\in D_{0}$. Let $G=h_{j}\cup
h_{3-j}\bigcup_{i=1}^{6} f_{i}\bigcup_{i=1}^{6} g_{i}\cup f_{7}\cup
f_{8}$ be a fragment of $F$. Denote by $f_{9},\cdots,f_{14}$ the
neighboring faces of $G$ as shown in Fig.  12(d). As
$f_{1},g_{1},g_{6}$ are pentagonal, $f_{9}$ must be a hexagon by the
assumption. So does $f_{12}$ by symmetry. Hence $v_{3},v_{4}\in A$
and $v_{5},v_{6}\in D_{0}$ by Observation \ref{characterizationm3}
(see Fig.  12(e)). Moreover, $f_{11}$ and $f_{14}$ are pentagons as
$p(\{e_{1}\})=p(\{e_{2}\})=1$. Thus $v_{7},v_{8}\in D_{0}$. Since
$D_0$ is independent in $F$,  $v_{6}$ and $v_{8}$ ($v_{5}$ and
$v_{7}$) must share a common neighbor, say $v_{9}$ ($v_{10}$),
belonging to $A$ (see Fig.  12(e)). Finally $v_{9}$ and $v_{10}$
must be adjacent by Lemma \ref{aforest} and we have the fullerene
graph $F_{42}$, which is excluded in the assumption. Until now
Theorem \ref{lr} is completed.

\begin{figure}[h]
\begin{center}
\includegraphics[scale=0.85
]{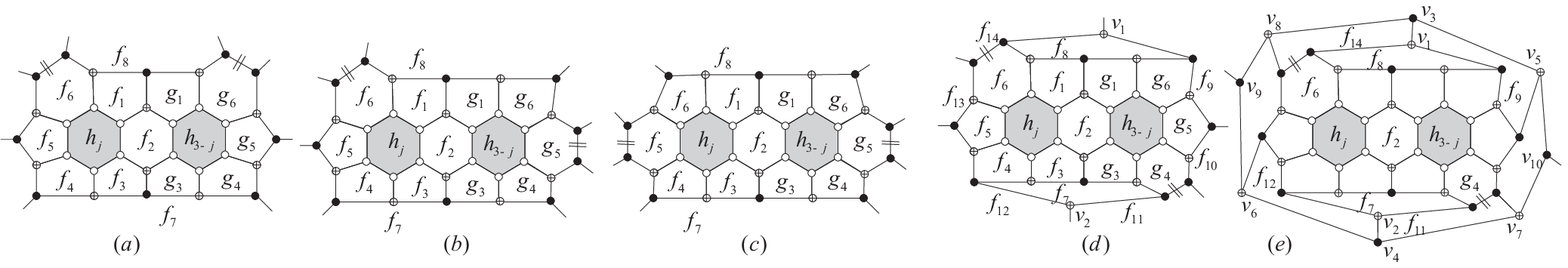}\\{Figure 12.  The four cases for distributions of
$e_{1},e_{2}$  in $f_{4},f_{5},f_{6}$ and $g_{4},g_{5},g_{6}$.}
\end{center}
\end{figure}


\section{Proof of Lemma \ref{no9edgecut}}

Suppose to the contrary that $F^{*}\in \cal D$ with $|\nabla(F^*)|=9$ exists. Then by
Ineqs. (5) and (6), $\cal D^*$ consists only of $F^{*}$, $E(A,A)=0$ (so $A$ is an independent set), $E(A,H)=0$ and $|\nabla(H)|=12$ (in particular, $h_{1}$ and
$h_{2}$ are not incident). Thus both $h_{1}$ and $h_{2}$ are
incident to $F^{*}$ by Lemma \ref{both}. For  $j\in \{1,2\}$, let
$k=|\nabla(h_{j})\cap \nabla(F^*)|$. Obviously, $1\leq
k\leq6$. Denote by $v_{1}v_{2}v_{3}v_{4}v_{5}v_{6}v_{1}$ the
boundary of $h_{j}$ along the clockwise direction and
$f_{1},f_{2},\cdots,f_{6}$ the six neighboring faces of $h_{j}$
containing the edges $v_{1}v_{2},v_{2}v_{3},\cdots,v_{6}v_{1}$,
respectively. Let $G=h_{j}\cup \bigcup_{i=1}^{6} f_{i}$. Before our
main argument, we give a definition for clusters.

A {\em cluster} at $h_{j}$ ($j=1,2$) is a sequence $Q=(e_{1},\cdots,
e_{r})$ such that

(1)  $e_{i}$ and $e_{i+1}$ belong to the same face  for each $1\leq i\leq
r-1$,

(2) $r\geq 3$ and for $2\leq i\leq r-1$, $e_{i}\in \nabla(F^*)\cap
\nabla(h_{j})$, while $e_{1},e_{r}\in \nabla(F^*)\backslash
\nabla(h_{j})$.

The size $|Q|$ of $Q$ is the number of its edges. Observe that
$e_{1}\neq e_{r}$ by Lemma \ref{intersectatoneedge} and $|Q|\geq3$.

For two edges $e_{1},e_{2}$ of
$F$, we call $e_{1}$ is {\em opposite} $e_{2}$ if they both belong
to the same hexagonal face of $F$ and no edge of this boundary
is incident with both $e_{1}$ and $e_{2}$.

About the clusters we have the following properties.

\textbf{Claim 1.} Let $j\in \{1,2\}$. Assume
that the neighboring faces of $h_{j}$ contain no other contributing
edges except for $E(\mathcal{D}^{*})$, then clusters at $h_{j}$ are
pairwise disjoint.
\begin{proof}  The proof of (1) is the same as Lemma 11 in
\cite{kaiser2010}. We omit it here. \end{proof}

\textbf{Claim 2.} There is at most one edge that is contained in a
cluster $Q_{1}$ at $h_{j}$ and a cluster $Q_{2}$ at $h_{3-j}$ such
that $|Q_{1}|=3,|Q_{2}|\geq3$.
\begin{proof}
 Assume two edges $e_{1},e_{2}$ are contained in both $Q_{1}$ and
$Q_{2}$. The definition of a cluster and Lemma
\ref{characterizationf1} guarantee $e_{1}$($e_{2}$) cannot be
contained in $\nabla(h_{1})$ or $\nabla(h_{2})$ and both $e_{1}$ and
$e_{2}$ are opposite an edge of $h_{j}$ and opposite an edge of
$h_{3-j}$. If $|Q_{1}|=3$ and $|Q_{2}|=3$, then by the above
analysis and planarity of $F$ we can obtain the configuration shown
in Fig. 13(a). Immediately a quadrangular face occurs, contradicting
the definition of $F$. If $|Q_{1}|=3$ and $|Q_{2}|\geq4$, also a
configuration depicted in Fig.  13(b) can be gained. However, in
this case, a degree-saturated path of length more than six is
obtained (see Fig.  13(b) the path along
$v_{1},v_{2},\cdots,v_{7}$), contradicting that every face of $F$
has a size of at most six.
\end{proof}

\begin{figure}[h]
\begin{center}
\includegraphics[scale=0.8
]{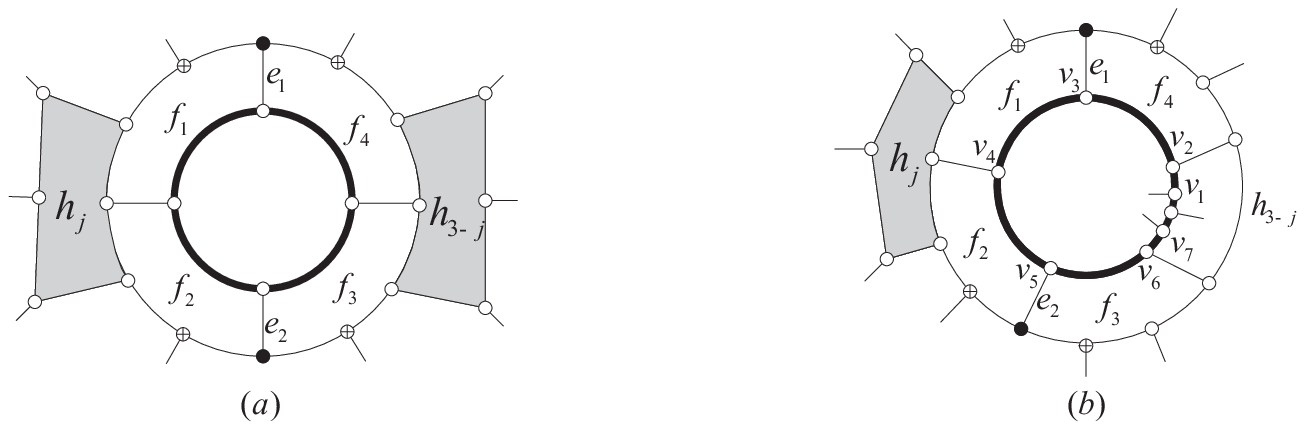}\\{Figure 13.  Clusters $Q_{1},Q_{2}$ at $h_{j}$ and
$h_{3-j}$: (a) $|Q_{1}|=3,|Q_{2}|=3$,  (b)
$|Q_{1}|=3,|Q_{2}|\geq4$.}
\end{center}
\end{figure}

Next we distinguish the following cases to complete the proof of
Lemma \ref{no9edgecut}.

\emph{Case 1.} $k=1$. Then there is exactly one cluster $Q$ at
$h_{j}$ with $|Q|=3$. Without loss of generality, suppose $v_{4}$ is
incident to $F^{*}$(see Fig.  14(a)). Then all of
$v_{1},v_{2},v_{3},v_{5},v_{6}$ are incident to $D_{0}$ and
$f_{3},f_{4}$ are hexagons with the boundaries $D^{*}D^{*}HHD_{0}A$
by Lemma \ref{characterizationf1}. In order to prevent the forbidden
subgraph $L$ occurring in $F$, at least one of
$f_{1},f_{2},f_{5},f_{6}$ is hexagonal. Lemmas
\ref{intersectingboth} and \ref{atmostoneneighboring} imply exactly
one of $f_{1},f_{2},f_{5},f_{6}$ must intersect both $h_{j}$ and
$h_{3-j}$ with the boundary $HHD_{0}HHD_{0}$ to form this hexagonal
face.

If $f_{2}$ intersects both $h_{j}$ and $h_{3-j}$, then $f_{1},f_{5}$
and $f_{6}$ are pentagons and $a_{i}$($1\leq i\leq5$) belongs to $A$
by Lemmas \ref{intersectingboth} and \ref{characterizationf1}, where
$a_{i}$($1\leq i\leq5$) is shown in Fig.  14(a). Let
$w_{1}w_{2}w_{3}w_{4}w_{5}w_{6}w_{1}$ be the boundary of $h_{3-j}$
along the anti-clockwise direction such that $\partial(f_{2})\cap
\partial(h_{3-j})=\{w_{2}w_{3}\}$. Denote by
$f_{7},f_{8},\cdots,f_{13}$ the seven neighboring faces of $G$ as
depicted in Fig.  14(a). If $a_{1}$ is incident to $F^{*}$, then all
of $a_{2},a_{3},a_{4}$ and $w_{1}$ are incident to $F^{*}$ to form
the neighboring faces $f_{7},f_{8},f_{9},f_{10}$ of $F^{*}$ by Lemma
\ref{characterizationf1}(see Fig.  14(b)). Now we obtain eight edges
belonging to $\nabla(F^*)$. Thus $w_{4}$ cannot be incident to
$F^{*}$, otherwise, $a_{5}$ is again incident to $F^{*}$ to form the
neighboring face $f_{12}$ of $F^{*}$ by Lemma
\ref{characterizationf1} and $|\nabla(F^*)|>9$, contradicting that
$|\nabla(F^*)|=9$. That is, $w_{4}$ is incident to $D_{0}$ and
$a_{5}$ and $w_{4}$ share a common neighbor, say $d_{1}$ (see Fig.
14(b)). It's easy to see $d_{1}$ cannot be again incident to
$h_{3-j}$, which means $d_{1}$ is again incident to a vertex
belonging to $A$, say $a_{6}$. Furthermore, $a_{6}$ must be incident
to $F^{*}$ to form the neighboring face $f_{13}$ of $F^{*}$ by Lemma
\ref{characterizationf1}. Then we obtain all the nine edges in
$\nabla(F^*)$ (see Fig.  14(b)) and three 2-degree vertices
$w_{6},w_{5}$ and $a_{6}$ that should have a common neighbor by
Lemma \ref{aforest}. Immediately a triangular face occurs, which is
impossible. So $a_{1}$ is incident to $D_{0}$ third times and shares
a neighbor with $w_{1}$. Moreover, $a_{2},a_{3},a_{4}$ are also
incident to $D_{0}$ third times as there are no edges between
$D_{0}$ and $F^{*}$ and all of $f_{8},f_{9},f_{10}$ are hexagons
with the boundaries $AD_{0}AD_{0}AD_{0}$ by
Observation\ref{characterizationm3}. Let $a_{6},a_{7},a_{8},d_{1}$
be the vertices shown in Fig.  14(c). If $a_{6}$ is incident to
$D_{0}$ third times, then the neighboring face of $h_{3-j}$ (say
$f_{14}$) including the edge $w_{1}w_{6}$ is a pentagon by
Lemma\ref{intersectingboth}. Thus $f_{14}\cup f_{7}\cup f_{1}\cup
f_{6}$ forms a subgraph $L$ in $F$ (see Fig.  14(c)), contradicting
the assumption. So $a_{6}$ is incident to $F^{*}$. Similarly,
$w_{6},a_{7},a_{8}$ are also incident to $F^{*}$. By the planarity
of $F$, $d_{1}$ is not incident to $h_{3-j}$. That is, $d_{1}$ is
incident to $A$ third times. Let $a_{9}$ be the third neighbor of
$d_{1}$. Then $a_{9}$ is incident to $F^{*}$ with two edges to form
two neighboring faces of $F^{*}$ also by Lemma
\ref{characterizationf1}(see Fig.  14(d)). We once again obtain nine
edges in $\nabla(F^*)$ and three 2-degree vertices
$w_{5},w_{4},a_{5}$ that must share a common neighbor. Also a
triangular face occurs, a contradiction.

\begin{figure}[h]
\begin{center}
\includegraphics[scale=0.7
]{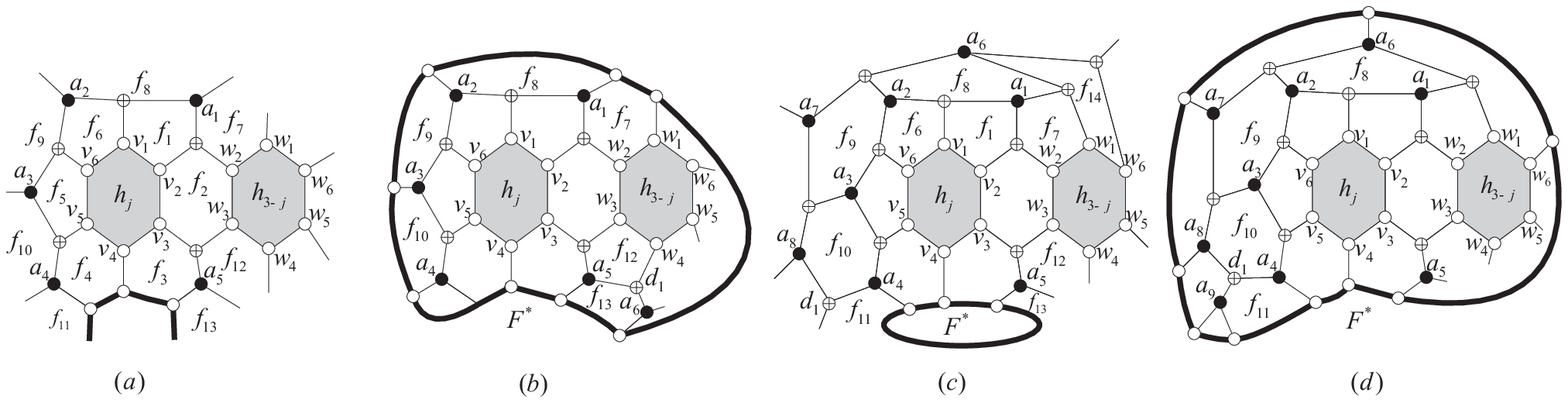}\\{Figure 14.  Illustration for Case 1 in the proof of
Lemma \ref{no9edgecut}.}
\end{center}
\end{figure}

So $f_{2}$ cannot intersect both $h_{j}$ and $h_{3-j}$. Thus it is a
pentagon by Lemma \ref{intersectingboth}. By symmetry, $f_{5}$ is
also a pentagon. If $f_{1}$ intersects both $h_{j}$ and $h_{3-j}$,
then similarly as the case above, we always obtain nine edges in
$\nabla(F^*)$ and three 2-degree vertices that must share a
common neighbor and finally occurs a triangular face, which is
impossible.

Summarizing the above analysis, both $h_{1}$ and $h_{2}$ must be
incident to $F^{*}$ with at least two edges, that is, $k\geq2$.

\emph{Case 2.} $k=2$. Then there are at most two clusters at
$h_{j}$. If exactly one cluster $Q_{1}$ exists at $h_{j}$. The
definition of a cluster implies $|Q_{1}|=4$. Without loss of generality, we may assume $v_{3},v_{4}$ are incident to $F^{*}$.
Analogously as Case 1, $f_{2},f_{4}$ are hexagons with the
boundaries $HHD^{*}D^{*}AD_{0}$ and $f_{1},f_{5},f_{6}$ are either
pentagons with the boundaries $HHD_{0}AD_{0}$ or hexagons
intersecting both $h_{j}$ and $h_{3-j}$ with the boundary
$HHD_{0}HHD_{0}$. For $f_{i}$ ($i=1,5,6$) intersecting both $h_{j}$
and $h_{3-j}$, we always obtain a triangular face in $F$, which is
impossible. For $f_{1},f_{5},f_{6}$ being pentagonal, we have the
configuration shown in Fig.  15(a). Let $a_{i}$ and $f_{j}$ be shown
in Fig.  15(a) for $i\in\{1,\cdots,5\},j\in\{7,\cdots,12\}$. Then
$a_{i}\in A$. If one of $a_{1},\cdots,a_{5}$ is incident to $F^{*}$,
then the remaining four are also incident to $F^{*}$ and we have
nine edges in $\nabla(F^*)$, but now $h_{1}$ and $h_{2}$ are missed,
which is impossible. So all of $a_{1},\cdots,a_{5}$ are incident to
$D_{0}$ third times and again $f_{8},f_{9},f_{10},f_{11}$ are
hexagons with the boundaries $AD_{0}AD_{0}AD_{0}$. Let
$a_{6},\cdots,a_{9},d_{1},d_{2}$ and $f_{13},\cdots,f_{17}$ be the
vertices and faces of $F$ as depicted in Fig.  15(b). If $d_{1}$ is
incident to $h_{3-j}$, then $f_{12}$ is a hexagon with the boundary
$D^{*}D^{*}HHD_{0}A$ by Lemma \ref{characterizationf1} and all of
$a_{6},\cdots,a_{9}$ are incident to $D_{0}$ third times, otherwise,
$|\nabla(F^*)|>9$ (impossible). That is, $f_{13}$ is a pentagon with
the boundary $HHD_{0}AD_{0}$ and $f_{14},f_{15},f_{16}$ are hexagons
with the boundaries $AD_{0}AD_{0}AD_{0}$ (see Fig.  15(c)).
Moreover, $d_{2}$ can not be incident to $h_{3-j}$ by the planarity
of $F$. That is, $d_{2}$ is incident to $A$ third times, say
$a_{10}\in A$ (see Fig. 15(c)). Again $a_{10}$ is incident to
$F^{*}$ to form the neighboring face $f_{7}$ of $F^{*}$ and $f_{17}$
is also a hexagon with the boundary $AD_{0}AD_{0}AD_{0}$. Now we
have six edges in $\nabla(F^*)$ (see Fig.  15(c)). Let
$a_{11},\cdots,a_{14},d_{3}$ be shown in Fig.  15(c). Apply the same
analysis to $a_{11},\cdots,a_{14},d_{3}$ as
$a_{6},\cdots,a_{9},d_{2}$ and repeat this procedure, finally we can
gain nine edges in $\nabla(F^*)$ but only one of them belongs to
$\nabla(h_{3-j})$, contradicting that $|\nabla(h_{3-j})\cap
\nabla(F^*)|\geq2$. This contradiction means $d_{1}$ is incident to
$A$ third times, say $a_{10}$. Similarly for $d_{2}$ (see Fig.
15(d)). Moreover, $d_{2}$ can not be incident to $a_{10}$ by the
planarity of $F$ and Lemma \ref{aforest}. Now we have a similar
situation as the vertices $a_{1},\cdots,a_{5}$ and we can repeat the
above procedure to $a_{6},\cdots,a_{11}$ until we gain nine edges in
$\nabla(F^*)$, but $h_{3-j}$ isn't incident to $F^{*}$,
contradicting Corollary \ref{atleastoneedge}.

\begin{figure}[h]
\begin{center}
\includegraphics[scale=0.7
]{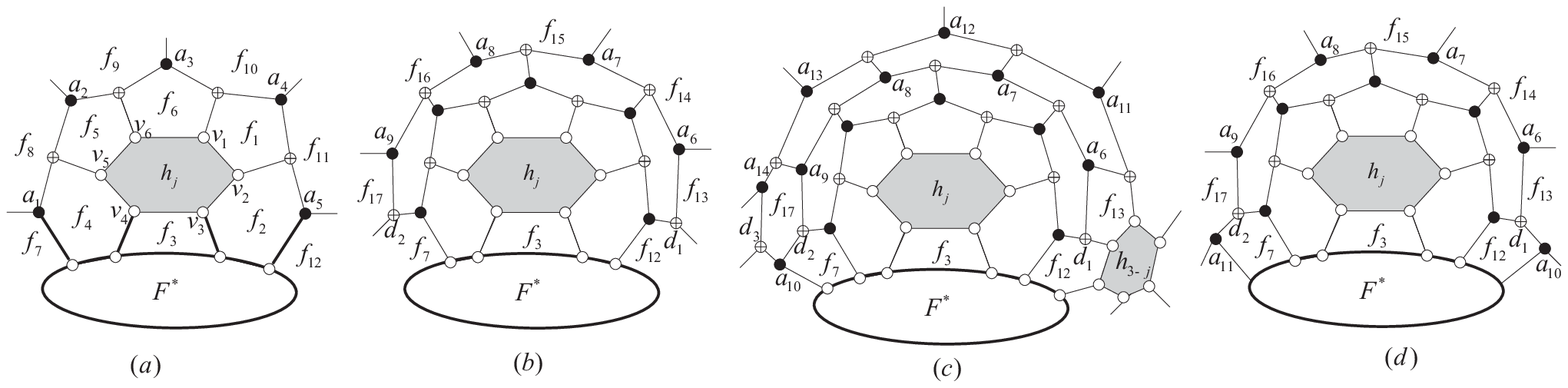}\\{Figure 15. Illustration for Case 2 in the proof of
Lemma \ref{no9edgecut}.}
\end{center}
\end{figure}

If there are two clusters $Q_{1},Q_{2}$ at $h_{j}$. As
$|\nabla(h_{j})\cap \nabla(F^*)|=2$, the definition of a cluster and
Claim 1 imply $|Q_{1}|=3,|Q_{2}|=3$ and we have obtained six edges
in $\nabla(F^*)$. Thus there are at most three edges of
$\nabla(F^*)$ left for the clusters at $h_{3-j}$. Note that there
are at most two edges belonging to the intersection of two clusters
at $h_{j}$ and $h_{3-j}$, respectively, and these two edges cannot
be in $\nabla(h_{j})$ or $\nabla(h_{3-j})$. So at most two clusters
exist at $h_{3-j}$. Moreover, the former situation shows that there
can not be one cluster at $h_{3-j}$ of size 4. Summarizing the above
analysis, we can know either precisely one cluster, say $Q_{3}$,
exists at $h_{3-j}$ satisfying $|Q_{3}|=5$ and $|Q_{3}\cap Q_{i}|=2$
for some $i\in \{1,2\}$ or two clusters, say $Q_{3},Q_{4}$, exist at
$h_{3-j}$ such that $|Q_{3}|=|Q_{4}|=3$ and $|Q_{i}\cap Q_{3}|=2$ or
$|Q_{3-i}\cap Q_{4}|=2$ for some $i\in \{1,2\}$ or
$|Q_{3}|=3,|Q_{4}|=4$ and $|Q_{i}\cap Q_{3}|=2, |Q_{3-i}\cap
Q_{4}|=2$ for $i\in \{1,2\}$. However, no matter which case happens
will contradict Claim 2. So $k\neq2$. In other words,
$|\nabla(h_{i})\cap \nabla(F^*)|\geq3$ for all $i\in \{1,2\}$.

\emph{Case 3.} $k\geq 3$. Let $k=3$. Then also at most two clusters
at $h_{j}$ are obtained by Claim 1 and the fact that
$|\nabla(h_{3-j})\cap \nabla(F^*)|\geq3$. As before, for exactly one
cluster at $h_{j}$ existence, we can always obtain a triangular face
in $F$ by checking for $f_{i}$ ($1\leq i\leq6$) intersecting both
$h_{j}$ and $h_{3-j}$ or not (Note in this case
$|\nabla(h_{3-j})\cap \nabla(F^*)|\geq3$). For two clusters
$Q_{1},Q_{2}$ at $h_{j}$ existence, we have $|Q_{k}|=3$ and
$|Q_{3-k}|=4$ for $k=1,2$ and $Q_{k}$ and $Q_{3-k}$ are disjoint by
Claim 1. That is, there are at most two edges of $\nabla(F^*)$ left
for the clusters at $h_{3-j}$, contradicting the fact that
$|\nabla(h_{3-j})\cap \nabla(F^*)|\geq3$. So $k\geq 4$ . However, in
this case we once again obtain that $|\nabla(F^*)|>9$, also a
contradiction.

Summarizing the above discussion, such a $F^{*}$ cannot exist in
$F$.

\section{Proof of Lemma \ref{no7edgecut}}

Suppose to the contrary that $F^{*}\in \mathcal{D}$ with $|\nabla(F^{*})|=7$
exists. Then $\nabla(F^*)$ forms a cyclic 7-edge-cut. If
$\nabla(F^*)$ is a degenerate cyclic 7-edge-cut, then $F^{*}$ or
$\overline{F^{*}}$ contains less than six pentagons, which means
$F^{*}$ or $\overline{F^{*}}$ is isomorphic to one component of
$D01,\cdots,D57$ as shown in Fig.  5. However, $F^{*}$ or
$\overline{F^{*}}$ is only possibly isomorphic to the components of
$D01,\cdots,D09,D11$ as $F$ can not possess $L$ or $R$ as subgraphs.
Moreover, since $F^{*}$ is 2-connected and $\overline{F^{*}}$
contains two disjoint hexagons $h_{1}$ and $h_{2}$, $F^{*}$ must be
isomorphic to one component of $D05,D08,D09,D11$ depicted in Fig. 5.

If $\nabla(F^*)$ is a non-degenerate cyclic 7-edge-cut, then it can
be constructed from the trivial ones using the reverse operations of
$(O_{1}),(O_{2}),(O_{3})$ by Theorem \ref{obtainedbytrivialones}.
Note that there cannot be the subgraphs $L$ or $R$ in $F$. So in our
construction process we stop extending the cyclic 7-edge-cuts as
long as we encounter the two subgraphs. Denote by $5D$ the trivial
cyclic 5-edge-cut. In the following table we list the configurations
that arise when applying operations $(O_{1}^{-1})$, $(O_{2}^{-1})$
and $(O_{3}^{-1})$ and in Fig. 16 we give the corresponding
non-degenerated cyclic 7-edge-cuts.
\\\\

Table 1: Generating the non-degenerated cyclic 7-edge-cut. \vskip
0.2cm\scriptsize
\begin{tabular*}{\textwidth}{@{\extracolsep{\fill}}|c|c|c|c|c|c|c|c|c|c|c|c|c|c|c|}
\hline
cut&5D&6D01&6D02&6D03&D01&D02&D03&D04&D05&D06&D07&D08&D09&D11\\
\hline &&&D02&D06&&&&&&&&&&\\
$O_{1}^{-1}$&6D02&D01&D03&&---&---&---&---&---&---&---&---&---&---\\
&&&D04&D07&&&&&&&&&&\\
\hline &&&&&&D05&D05&D06&&D08&D09&D11&D11&D16\\
$O_{2}^{-1}$&---&---&6D03&6D04&D05&&&&D08&D09&&&&\\
&&&&&&D06&D06&D07&&D10&D10&D12&D13&D17\\
\hline
$O_{3}^{-1}$&5ND&6ND01&---&6ND02&---&---&---&---&ND01&---&---&ND02&---&ND03\\
\hline
\end{tabular*}
\normalsize

\begin{figure}[h]
\begin{center}
\includegraphics[scale=0.8]{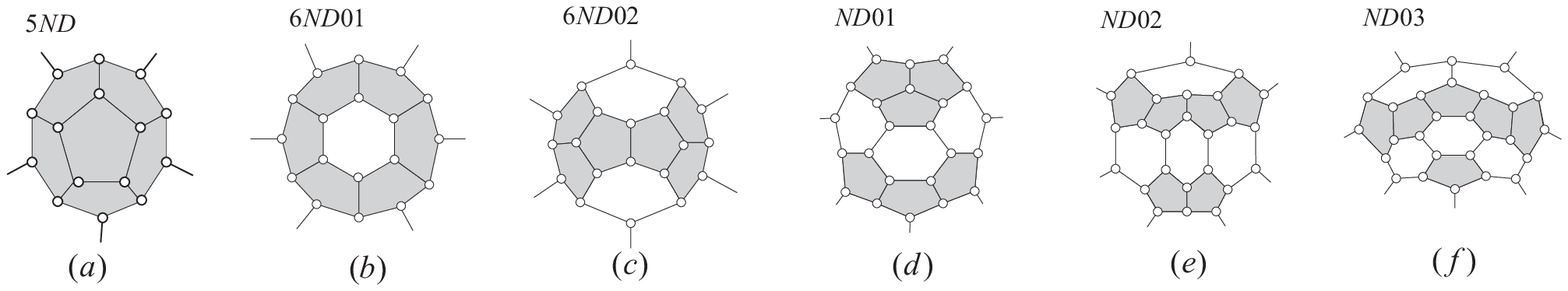}\\{Figure 16.  The non-degenerated cyclic 7-edge-cuts
when applying operations $(O_{1}^{-1})$, $(O_{2}^{-1})$ and
$(O_{3}^{-1})$.}
\end{center}
\end{figure}

Since the components of $5ND,6ND01,6ND02,ND01$ and $ND02$ (see Fig.
16 ($a$),($b$),($c$),

($d$),($e$)) contract the assumption, combining this with the
previous analysis, the components of $\nabla(F^*)$ contain a
subgraph isomorphic to the component of $ND03$ depicted in Fig.
16(f).

By the above analysis, we only need to show the following three
Claims hold in order to prove Lemma \ref{no7edgecut}.

\textbf{Claim 1:}  $F^{*}$ can not be isomorphic to the component of
$D05$ as illustrated in Fig.  5.

\begin{proof} To the contrary, $F^{*}$ is the component of $D05$. Let $v_{1},\cdots,v_{7}$ and $f_{1},\cdots,f_{7}$ be the vertices
and neighboring faces of $F^{*}$ as shown in Fig.  17(a). Then
$v_{1},\cdots,v_{7}$ are incident to $H$ or $A$. Moreover, there
exists at most one $E(h_{1},h_{2})$ or $E(A,A)$ or $E(A,H)$ edge or
one pentagonal factor-critical component by Ineqs. (5) and (6) and
Prop. \ref{5edgecut}. For convenience, we usually don't distinguish
$h_{1}$ and $h_{2}$.

\begin{figure}[h]
\begin{center}
\includegraphics[scale=0.7
]{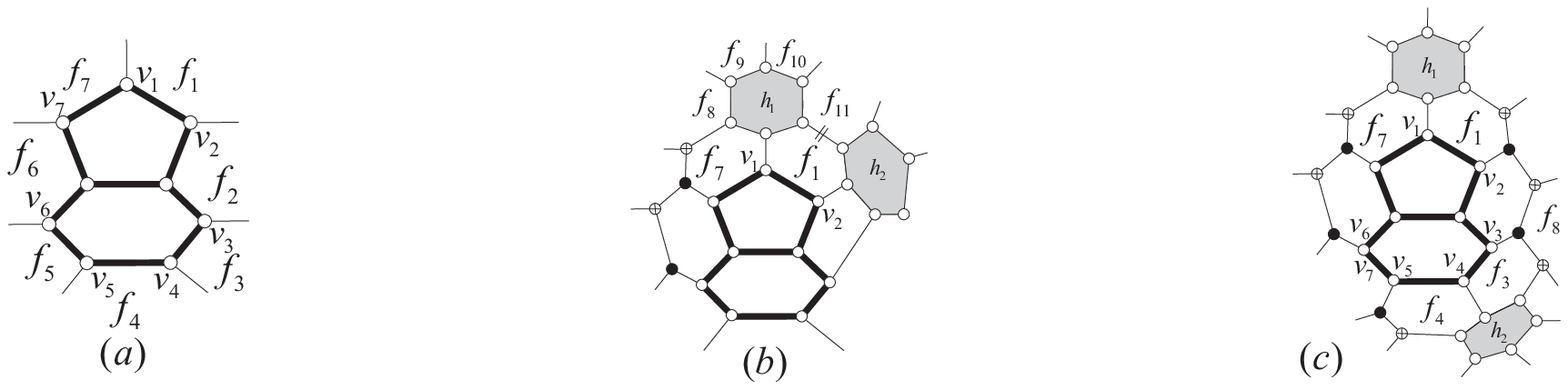}\\{Figure 17.  (a) The labellings of $F^{*}$, (b) the
case $v_{1},v_{2}$ incident to $h_{1},h_{2}$ (respectively), and (c)
the case $v_{1},v_{4}$ incident to $h_{1},h_{2}$ (respectively).}
\end{center}
\end{figure}

By Lemma \ref{both}, at least one of $v_{1},\cdots,v_{7}$ is
incident to $h_{1}$ or $h_{2}$. Firstly suppose $v_{1}$ is incident
to $h_{1}$. Then all of $v_{2},\cdots,v_{7}$ can not be incident to
$h_{1}$ by Lemma \ref{aforest}. Let $f_{8},f_{9},f_{10},f_{11}$ be
the four neighboring faces of $h_{1}$ different from $f_{1},f_{7}$
(see Fig.  17(b)). Then $v_{2}$ can not be incident to $h_{2}$,
otherwise, $h_{1}$ and $h_{2}$ are incident and all of
$f_{8},f_{9},f_{10},f_{11}$ are pentagons by Lemmas
\ref{intersectingboth} and \ref{characterizationh}(2), thus a
subgraph $L$ occurs in $F$ (impossible) (see Fig.  17(b)). So
$v_{2}$ is incident to $A$. By symmetry, $v_{7}$ is also incident to
$A$. If $v_{3}$ is incident to $h_{2}$, then $f_{2}$ contains an
$E(A,H)$ edge and there are no other contributing edges except for
this $E(A,H)$ edge and $E(F^{*})$. Now applying Lemma
\ref{characterizationf1} to $f_{1},\cdots,f_{7}$ we have
$p(H)+p(E(A,A))+p(\mathcal{D}^{*})\leq(4+4)+3=11$, contradicting
Ineq. (7). So $v_{3}$ is incident to $A$. Similarly for $v_{6}$. Now
we may assume $v_{4}$ is incident to $h_{2}$. Then all of
$f_{1},f_{3},f_{4},f_{7}$ are hexagons with the boundaries
$D^{*}D^{*}HHD_{0}A$ (see Fig.  17(c)), otherwise, $f_{1}$ or
$f_{3}$ or $f_{4}$ or $f_{7}$ contains an $E(A,H)$ edge and by Lemma
\ref{characterizationf1}
$p(H)+p(E(A,A))+p(\mathcal{D}^{*})\leq(4+4)+3=11$ (impossible).
Moreover, $p(V(h_{1}))\leq3,p(V(h_{2}))\leq3$. At this time at most
one $E(A,H)$ or $E(A,A)$ edge or one pentagonal factor-critical
component $F_{1}^{*}$ exists in $F$. However, since an $E(A,A)$ edge
gives rise to at most two pentagons and $p(V(F_{1}^{*}))\leq3$, we
obtain $p(H)+p(E(A,A))+p(\mathcal{D}^{*})\leq(3+3)+2+3=11$ no matter
which case occurs, also impossible by Ineq. (7). So $v_{4}$ is
incident to $A$. So does $v_{5}$ by symmetry. Now applying Corollary
\ref{both1} we can know $f_{10}$ or $f_{11}$ intersects both $h_{1}$
and $h_{2}$ with the boundary $HHD_{0}HHD_{0}$. Moreover, if there
is an $E(A,V(h_{2}))$ edge, then $f_{1},\cdots,f_{7}$ contain no
contributing edges. Thus $f_{1},f_{2},f_{6},f_{7}$ are hexagons and
$f_{3},f_{4},f_{5}$ are pentagons by Lemma \ref{characterizationf1}.
Hence $p(H)+p(E(A,A))+p(\mathcal{D}^{*})\leq(3+3)+4=10$
(impossible). If there is an $E(A,A)$ edge, say $e\in E(A,A)$, then
for $f_{11}$ intersecting both $h_{1}$ and $h_{2}$ we have all of
$f_{8},f_{9},f_{10}$ are pentagons with the boundaries
$HHD_{0}AD_{0}$ by Lemma \ref{intersectingboth} and $e$ must be
contained in $f_{15}$ (see Fig.  18(a) for the labellings of
$f_{12},\cdots,f_{16}$), otherwise, $f_{12}\cup f_{13}\cup
f_{14}\cup f_{15}$ forms a subgraph $L$ when $e$ belongs to $f_{16}$
and $f_{9}\cup f_{10}\cup f_{16}\cup f_{15}$ forms a subgraph $L$
when $e$ belongs to $f_{12}$ or $f_{13}$ or $f_{14}$. Now all of
$f_{12},f_{13},f_{14}$ are pentagons with the boundaries
$HHD_{0}AD_{0}$ by Lemma \ref{intersectingboth} and  $f_{2}$ is a
hexagon with the boundary $D^{*}D^{*}D^{*}AD_{0}A$ and $f_{3}$ is a
pentagon with the boundary $D^{*}D^{*}AD_{0}A$ by Lemma
\ref{characterizationf1}. Immediately the vertex $v$ (see Fig.
18(b)) belongs to three pentagons, contradicting the assumption.
Similarly for $f_{10}$ intersecting both $h_{1}$ and $h_{2}$ we have
$f_{8},f_{9},f_{11}$ are pentagons with the boundaries
$HHD_{0}AD_{0}$ and $e$ must belong to $f_{15}$. Again we have a
vertex $v$ belongs to three pentagons (see Fig.  18(c))
(impossible). If there is a pentagonal factor-critical component
$F_{1}^{*}$, then $F_{1}^{*}$ can not be incident to $h_{1}$,
otherwise, $p(V(h_{1}))\leq1,p(V(F_{1}^{*}))\leq2$ and
$p(H)+p(E(A,A))+p(\mathcal{D}^{*})\leq(3+1)+(4+2)=10$ (impossible).
Moreover, $f_{3}$ or $f_{4}$ or $f_{5}$ must contain an
$E(F_{1}^{*})$ edge to form a hexagonal neighboring face of
$F_{1}^{*}$ by Lemma \ref{characterizationf} and the fact that
$p(V(F_{1}^{*}))\leq3$. If $f_{3}$ or $f_{5}$ contains an
$E(F_{1}^{*})$ edge, say $f_{3}$, then $v_{8}$ is incident to
$h_{2}$ (see Fig.  18(d) for the labellings of $v_{8}$ and the
neighboring faces of $F_{1}^{*}$) since $F$ can not possess $R$ as
subgraph. Now by Lemmas \ref{characterizationf} and
\ref{characterizationf1} all of $f_{12},f_{15},f_{4},f_{5}$ are
pentagons with the boundaries $D^{*}D^{*}AD_{0}A$. Thus $f_{12}\cup
F_{1}^{*}\cup f_{15}\cup f_{4}$ forms a subgraph $L$ (see Fig.
18(d)), contradicting the assumption. If $f_{4}$ contains an
$E(F_{1}^{*})$ edge, then analogously $f_{12}\cup F_{1}^{*}\cup
f_{15}\cup f_{5}$ forms a subgraph $L$ (see Fig.  18(e)) (also
impossible). This contradiction means $v_{1}$ can not be incident to
$h_{1}$. Thus $v_{1}$ is incident to $A$.

\begin{figure}[h]
\begin{center}
\includegraphics[scale=0.7
]{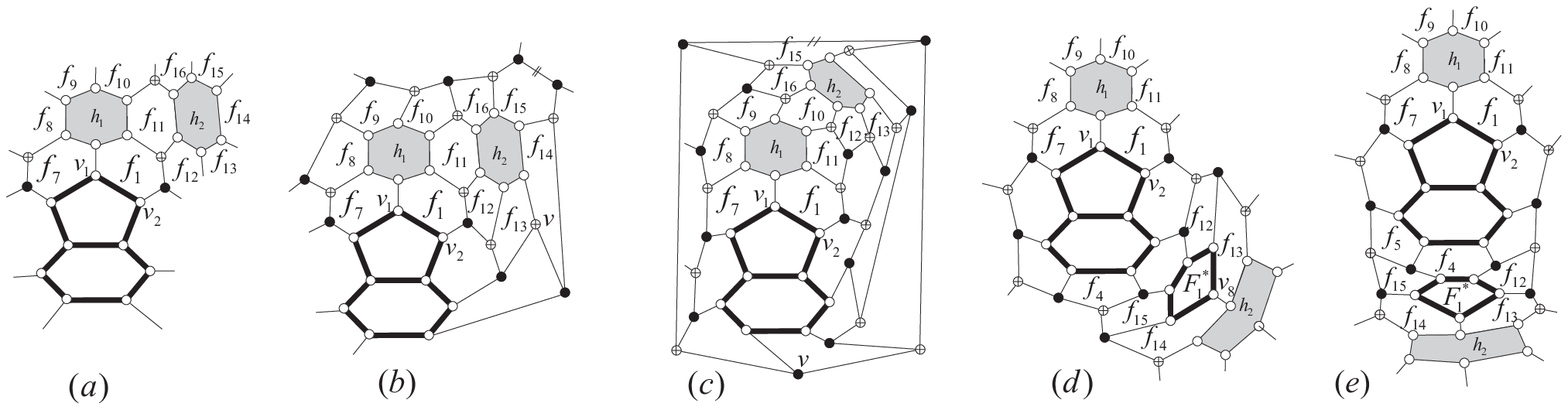}\\{Figure 18.  The case $v_{1}$ incident to $h_{1}$ and
all of $v_{2},\cdots,v_{7}$ incident to $A$.}
\end{center}
\end{figure}

Now suppose $v_{2}$ is incident to $h_{1}$, then $v_{3}$ is also
incident to $h_{1}$ to form the pentagon $f_{2}$ (see Fig.  19(a)),
otherwise, $f_{2}$ includes an $E(A,H)$ edge and there are no other
contributing edges by Ineqs. (5) and (6), which means $h_{2}$ is
incident to $F^{*}$ by Corollary \ref{atleastoneedge}. However, for
$v_{4}$ or $v_{5}$ or $v_{6}$ or $v_{7}$ incident to $h_{2}$ we'll
obtain $p(H)+p(E(A,A))+p(\mathcal{D}^{*})\leq(4+4)+3=11$ by Lemma
\ref{characterizationf1} (impossible). On the other hand, $f_{1}$ is
a hexagon with the boundary $D^{*}D^{*}HHD_{0}A$ since $v_{2}$ can
not belong to three pentagons by the assumption (see Fig.  19(a)).
Then if $v_{4}$ is incident to $h_{2}$, then $f_{3}$ contains an
$E(h_{1},h_{2})$ edge (see Fig.  19(a)) and there are no other
contributing edges, which means the neighboring faces of $h_{2}$
different from $f_{3},f_{4}$ are pentagons and they form a subgraph
$L$ in $F$ (impossible). So $v_{4}$ is incident to $A$. If $v_{5}$
is incident to $h_{2}$, then all of $f_{3},f_{4},f_{5}$ are hexagons
with the boundaries $D^{*}D^{*}HHD_{0}A$ (see Fig. 19(b)),
otherwise, $p(H)+p(E(A,A))+p(\mathcal{D}^{*})\leq(4+4)+3=11$
(impossible). Let $f_{8},f_{9},f_{10}$ and
$f_{11},f_{12},f_{13},f_{14}$ be the neighboring faces of $h_{1}$
and $h_{2}$, respectively (see Fig. 19(b)). Then at least one of
$f_{11},f_{12},f_{13},f_{14}$ is hexagonal in order to avoid the
occurrence of the subgraph $L$. On the other hand,
$f_{8},\cdots,f_{14}$ are pairwise different by Lemma \ref{aforest}.
Thus at least one of $f_{11},f_{12},f_{13},f_{14}$ contains a
contributing edge by Lemma \ref{intersectingboth}. For an
$E(h_{1},h_{2})$ edge contained in $f_{j}$ ($11\leq j\leq14$),
$f_{11}\cup f_{12}\cup f_{13}\cup f_{14}$ forms a subgraph $L$ by
Lemma \ref{characterizationh}(2)(impossible). For an $E(A,H)$ edge
contained in $f_{j}$ ($11\leq j\leq14$), we have
$p(H)+p(E(A,A))+p(\mathcal{D}^{*})\leq(2+4)+3=9$ (impossible). For
an $E(A,A)$ edge contained in $f_{j}$ ($11\leq j\leq14$), we have
$p(H)+p(E(A,A))+p(\mathcal{D}^{*})\leq(3+4)+1+3=11$ (impossible).
For an $E(F_{1}^{*})$ edge contained in $f_{j}$ ($11\leq j\leq14$)
we have $p(H)+p(E(A,A))+p(\mathcal{D}^{*})\leq(2+2)+(3+2)=9$
(impossible) when $F_{1}^{*}$ is incident to $h_{1}$ and
$p(H)+p(E(A,A))+p(\mathcal{D}^{*})\leq(2+4)+(3+3)=12$ when
$F_{1}^{*}$ is not incident to $h_{1}$, but now $f_{2}\in
\{P|V(P)\cap V(F^{*})\neq\emptyset\}\cap \{P|V(P)\cap
H\neq\emptyset\}$, contradicting that $|\mathcal{P}|=12$. So $v_{5}$
is incident to $A$. If $v_{6}$ is incident to $h_{2}$, then
similarly as the cases above, $v_{7}$ is also incident to $h_{2}$ to
form the pentagonal face $f_{6}$ and $f_{1},f_{3},f_{5},f_{7}$ are
all hexagons with the boundaries $D^{*}D^{*}HHD_{0}A$ (see Fig.
19(c)). Now for none or an $E(A,H)$ edge existence, we have
$p(H)+p(E(A,A))+p(\mathcal{D}^{*})\leq(4+4)+2=10$ (impossible). For
an $E(A,A)$ edge existence, say $e\in E(A,A)$, then $e$ can not be
contained in the neighboring faces of $h_{1}$ or $h_{2}$ or $F^{*}$,
otherwise, $p(H)+p(E(A,A))+p(\mathcal{D}^{*})\leq(4+4)+2+2-1=11$
(impossible). Thus the neighboring faces of $h_{1}$ ($h_{2}$)
different from $f_{1},f_{2},f_{3}$ ($f_{5},f_{6},f_{7}$) are
pentagons with the boundaries $HHD_{0}AD_{0}$ and $f_{4}$ is also a
pentagon with the boundary $D^{*}D^{*}AD_{0}A$ by Lemmas
\ref{intersectingboth} and \ref{characterizationf1}. Finally we
obtain the fullerene graph $F_{48}^{1}$ as shown in Fig.  19(d). For
an $E(F_{1}^{*})$ edge existence, we have a subgraph $R$ when
$F_{1}^{*}$ is neither incident to $h_{1}$ nor to $h_{2}$
(impossible) and the number of pentagons is at most (4+2)+(3+1)=10
when $F_{1}^{*}$ is incident to at least one of $h_{1},h_{2}$ by
Lemmas \ref{characterizationf}, \ref{characterizationf1},
contradicting that $|\mathcal{P}|=12$. This contradiction means
$v_{6}$ is incident to $A$. If $v_{7}$ is incident to $h_{2}$, then
$f_{6}$ contains an $E(A,H)$ edge and
$p(H)+p(E(A,A))+p(\mathcal{D}^{*})\leq(4+3)+3=10$ (impossible). Thus
$v_{7}$ is also incident to $A$. Let $f_{8},f_{9},f_{10}$ be the
neighboring faces of $h_{1}$ different form $f_{1},f_{2},f_{3}$ (see
Fig.  20(a)). Now we have a similar situation as the case $v_{1}$
incident to $h_{1}$ and for an $E(A,A)$ edge existence, we have the
fullerene graphs $F_{44}^{1},F_{44}^{2}$ as shown in Fig. 20(b),(c).
For $h_{2}$ incident to a pentagonal factor-critical component
$F_{1}^{*}$ we have the fullerene graph $F_{46}^{1}$ as depicted in
Fig.  20(d). But all of $F_{44}^{1},F_{44}^{2}$ and $F_{46}^{1}$ are
excluded in the assumption. Hence in the following we may assume
$v_{2}$ is incident to $A$. So does $v_{7}$ by symmetry.

\begin{figure}[h]
\begin{center}
\includegraphics[scale=0.7
]{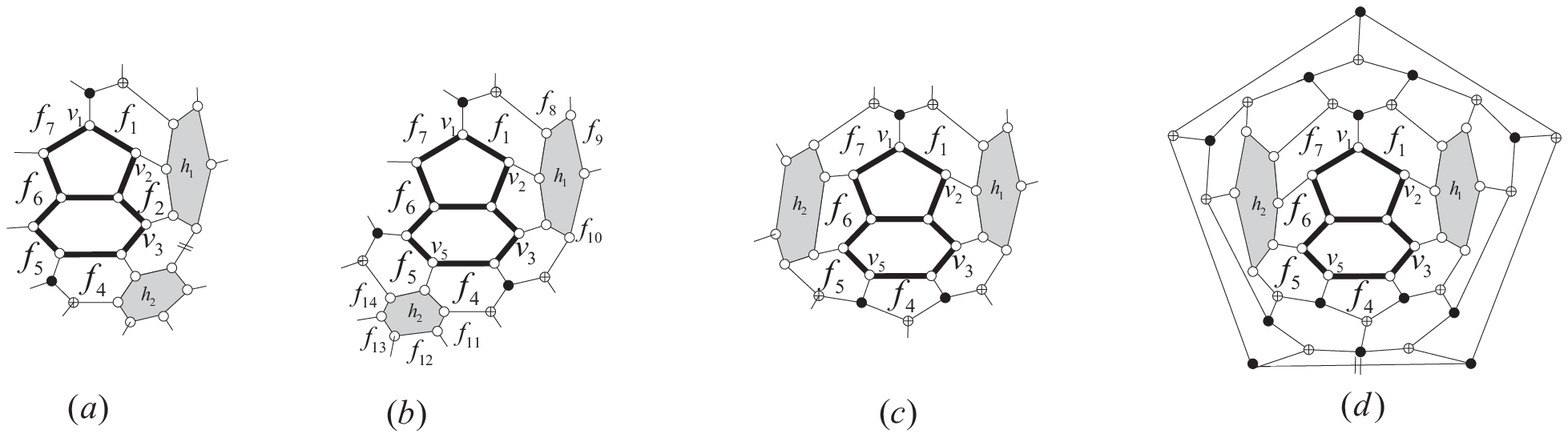}\\{Figure 19.  The case $v_{2},v_{3}$ incident to
$h_{1}$ and $v_{4}$ or $v_{5}$ or $v_{6},v_{7}$ incident to
$h_{2}$.}
\end{center}
\end{figure}

\begin{figure}[h]
\begin{center}
\includegraphics[scale=0.7
]{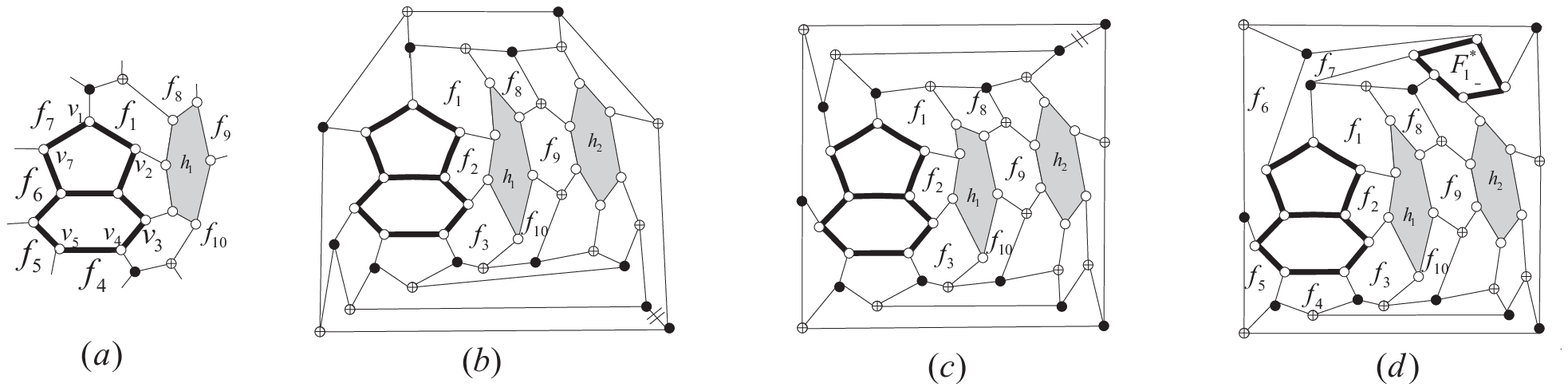}\\{Figure 20.  The case $v_{2},v_{3}$ incident to
$h_{1}$ and all of $v_{4},\cdots,v_{7}$ incident to $A$.}
\end{center}
\end{figure}

 Next suppose $v_{3}$ is incident to $h_{1}$, then $f_{2}$ contains
 an $E(A,H)$ edge and $p(H)+p(E(A,A))+p(\mathcal{D}^{*})\leq(4+3)+2=9$
 (impossible). Thus $v_{3}$ is also incident to $A$. So does $v_{6}$
 by symmetry. If $v_{4}$ is incident to $h_{1}$, then $v_{5}$ can not
 be incident to $h_{2}$, otherwise, $h_{1}$ and $h_{2}$ are incident
 and $v_{1}$ belongs to three pentagons by Lemma
 \ref{characterizationf1} (impossible). Hence
 $v_{5}$ is also incident to $A$. Using a similar discussion as the
 case $v_{1}$ incident to $h_{1}$ we can know this situation can not
 happen.
\end{proof}

\textbf{Claim 2:} $F^{*}$ can not be isomorphic to the component of
$D08$ or $D09$ or $D11$ as illustrated in Fig.  5.

\begin{proof} Also by contrary, suppose $F^{*}$ is isomorphic to the component of
$D08$ or $D09$ or $D11$. Let $v_{i}$ and $f_{i}$ ($1\leq i\leq7$) be
depicted in Fig. s 21(a), 22(a) and 22(d). Since $F$ does not
possess $L,R$ as subgraphs, $f_{7}$ is a hexagon when $F^{*}$ is
isomorphic to $D08$ (see Fig.  21(a)) and
$f_{2},f_{3},f_{5},f_{6},f_{7}$ are hexagons when $F^{*}$ is
isomorphic to $D09$ (see Fig.  22(a)) and $f_{1},f_{3},f_{5},f_{7}$
are hexagons when $F^{*}$ is isomorphic to $D11$ (see Fig.  22(d)).
By Lemma \ref{both} at least one of $v_{1},\cdots,v_{7}$ is incident
to $h_{1}$ or $h_{2}$. Then similarly as Claim 1, firstly we suppose
$v_{1}$ is incident to $h_{1}$ and we check that
$v_{2},\cdots,v_{7}$ are incident to $H$ or not. If $v_{1}$ is
finished, then we let $v_{2}$ be incident to $h_{1}$ and test that
$v_{3},\cdots,v_{7}$ are incident to $H$ or not. Execute the above
procedure ceaselessly until all of $v_{1},\cdots,v_{7}$ are incident
to $A$. Using the above checking procedure as Claim 1 finally we can
gain the fullerene graphs
$F_{46}^{2},F_{46}^{1},F_{48}^{2},F_{48}^{3}$ as shown in Fig.
21(b),(c),(d),(e) when $F^{*}$ is isomorphic to $D08$ and
$F_{46}^{4},F_{48}^{4}$ as shown in Fig.  22(b),(c) when $F^{*}$ is
isomorphic to $D09$. But these fullerene graphs are excluded in the
assumption.\end{proof}
\begin{figure}[h]
\begin{center}
\includegraphics[scale=0.7
]{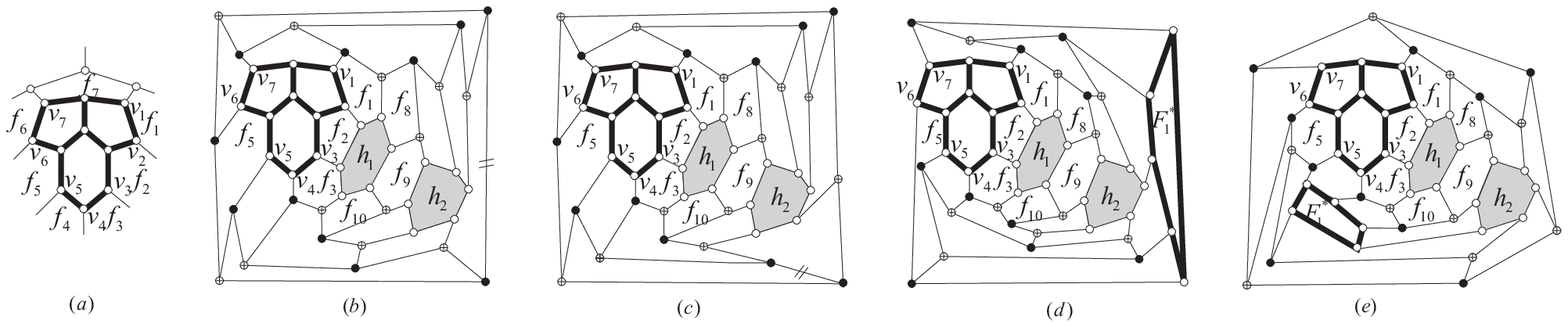}\\{Figure 21.  The case $F^{*}$ being isomorphic to the
component of $D08$.}
\end{center}
\end{figure}

\begin{figure}[h]
\begin{center}
\includegraphics[scale=0.7
]{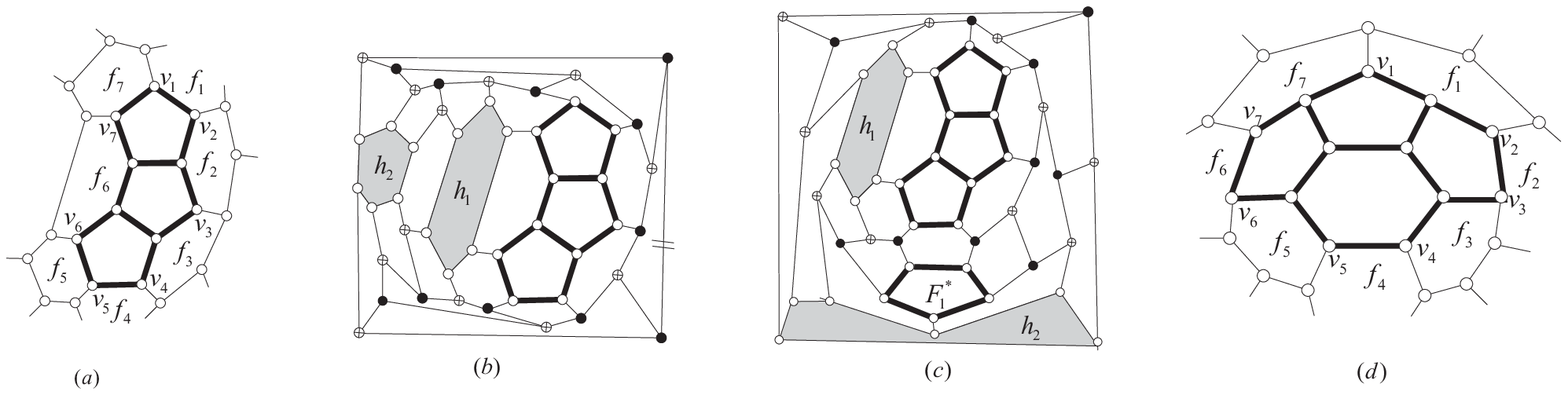}\\{Figure 22.  The case $F^{*}$ being isomorphic to the
component of $D09$ or $D11$.}
\end{center}
\end{figure}

\textbf{Claim 3:} $\overline{F^{*}}$ can not contain a subgraph
isomorphic to the component of $ND03$ as shown in Fig.  16 (f)

\begin{proof} For convenience, we write $G_{1}$ for the component of
$ND03$ shown in Fig.  16(f). By contrary suppose $\overline{F^{*}}$
contains the subgraph $G$. Denote by $f_{i}$ ($1\leq i\leq5$) and
$P_{i}$ ($1\leq i\leq6$) the hexagonal and pentagonal inner faces of
$G$ and $f_{j}$ ($6\leq j\leq12$) the six neighboring faces of $G$
depicted in Fig.  23(a). If $f_{1}$ equals $h_{1}$, then $f_{2}$
contains at least one contributing edges by Lemma
\ref{intersectingboth}. That is, there are no other contributing
edges except for the ones contained in $f_{2}$ and $E(F^{*})$ as
$s(\mathcal{D})\geq2$. In other words, $P_{6}$ must contain a vertex
in $H$ by (*). If $f_{3}$ is the hexagon $h_{2}$, then $f_{4}$
includes a contributing edge by Lemma \ref{intersectingboth},
contradicting Ineq. (5) (see Fig.  23(a) the case $f_{4}$ includes
an $E(A,H)$ edge). Similarly, none of $f_{4},f_{5},f_{6}$ and
$f_{7}$ is the hexagon $h_{2}$, which means $P_{6}$ contains no
vertex in $H$, also a contradiction. Thus $f_{1}$ does not equal
$h_{1}$. So does $f_{2}$ by symmetry. If $f_{3}$ equals $h_{1}$,
then similarly as the case $f_{1}$ equals $h_{1}$, $f_{4}$ contains
at least one contributing edge and $P_{1}$ must include a vertex in
$H$. However, no matter which face of $f_{1},f_{5},f_{8},f_{9}$ is
the hexagon $h_{2}$ we will obtain another contributing edge
different from the one contained in $f_{4}$, again contradicting
Ineq. (5). This contradiction means $f_{3}$ can not be the hexagon
$h_{1}$. So does $f_{5}$ by symmetry. If $f_{4}$ equals $h_{1}$,
then exactly one of $f_{3}$ and $f_{5}$ intersects both $h_{1}$ and
$h_{2}$, otherwise, each of $f_{3}$ and $f_{5}$ contains a
contributing edge by Lemma \ref{intersectingboth} and
$|E(A,H)|+|E(A,A)|+s(\mathcal{D})\geq4$ (impossible). Without loss of generality, suppose $f_{8}$ equals $h_{2}$ (see Fig.  23(b)),
then $f_{9}$ must be a hexagon, otherwise, $P_{1}\cup P_{2}\cup
P_{3}\cup f_{9}$ forms a subgraph $L$, contradicting the assumption.
Thus also $f_{9}$ contains a contributing edge different from the
one contained in $f_{3}$, which is impossible. So $f_{4}$ does not
equal $h_{1}$. Similarly, $f_{i}$ ($6\leq i\leq12$) cannot be the
hexagon $h_{1}$ or $h_{2}$. Since the only one possible non-trivial
factor-critical component of $F-H-A$ other than $F^{*}$ is a
pentagon by Prop. \ref{5edgecut}, whether $P_{i}$ ($1\leq i\leq6$)
is the pentagonal factor-critical component or not we will obtain
that $|E(A,A)|+s(\mathcal{D})\geq4$ by applying (*) to the six
pentagons $P_{i}$ ($1\leq i\leq6$), also a contradiction.\end{proof}

\begin{figure}[h]
\begin{center}
\includegraphics[scale=0.7
]{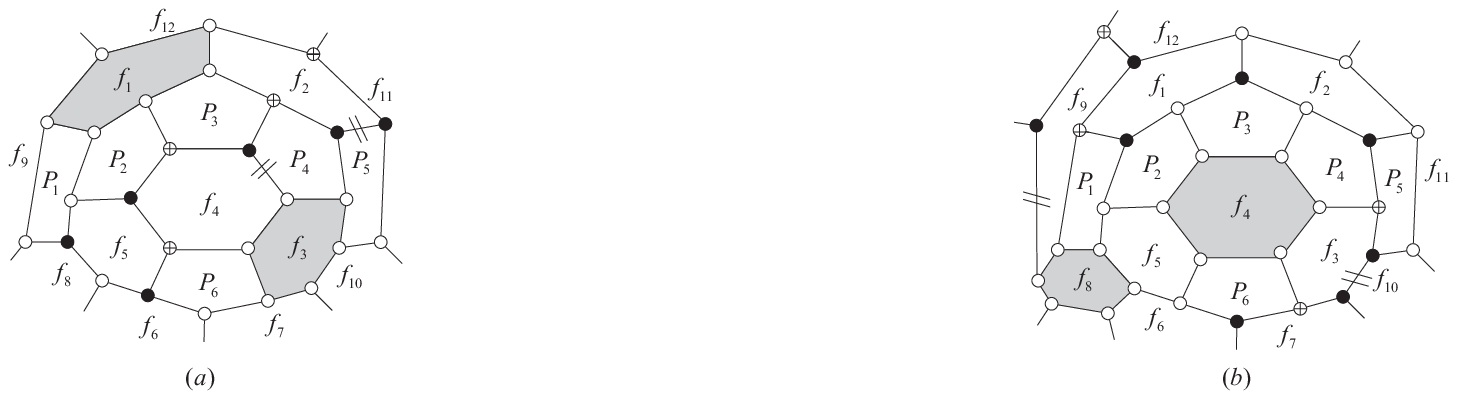}\\{Figure 23.  Illustration for Claim 3 in the proof of
Lemma \ref{no7edgecut}.}
\end{center}
\end{figure}


\section{Proof of Lemma \ref{no5edgecut}}

Since every $F^{*}\in \mathcal{D}$ with $|\nabla(F^*)|=5$ is a
pentagon by Prop. \ref{5edgecut} and $1\leq p(V(F^{*}))\leq3$ by
Ineq. (8), we indicate the following three Claims hold to prove
Lemma \ref{no5edgecut}.

\textbf{Claim 1:}  There is no a pentagonal component $F^{*}\in
\mathcal{D}^{*}$ such that $p(V(F^{*}))=3$.

\begin{proof} Suppose to the contrary that there exists one such pentagonal factor-critical
component $F^{*}$. Denote by $v_{1}v_{2}v_{3}v_{4}v_{5}v_{1}$ the
boundary of $F^{*}$ along the clockwise direction and $v_{i}'$ the
neighbor of $v_{i}$ ($i=1,2,3,4,5$) not in $F^{*}$. Let
$f_{1},f_{2},\cdots,f_{5}$ be the five neighboring faces of $F^{*}$
along the edges $v_{1}v_{2},v_{2}v_{3},\cdots,v_{5}v_{1}$,
respectively. Lemma\ref{lessthan1} guarantees at most one of
$v_{1},v_{2},\cdots,v_{5}$ is incident to $h_{j}$ for some $j\in
\{1,2\}$. Note if there exists another
$F_{1}^{*}\in\mathcal{D}^{*}$, then $F_{1}^{*}$ is also a pentagon
by Lemma \ref{no7edgecut} and the fact $s(\mathcal{D})\leq3$. We
always do not distinguish $h_{1}$ and $h_{2}$.

As $p(V(F^{*}))=3$, two of $f_{1},f_{2},\cdots,f_{5}$ are pentagons.
Then the two pentagons are not adjacent by the assumption. By
symmetry, we can suppose $f_{1}$ and $f_{4}$ are the two pentagons.
Denote by $f_{6},f_{7},\cdots,f_{13}$ the faces and
$v_{6},v_{7},\cdots,v_{13}$ the vertices of $F$ as shown in Fig.
24(a). Then $f_{6},f_{12}$ are hexagons in order to prevent the
forbidden subgraph $L$ occurring in $F$.

\begin{figure}[h]
\begin{center}
\includegraphics[scale=0.7
]{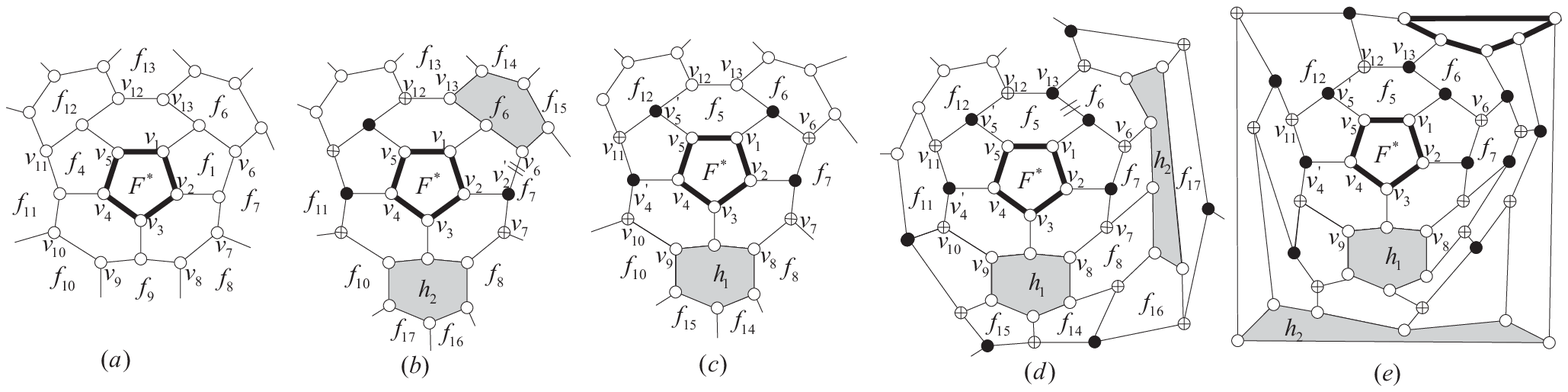}\\{Figure 24.  Illustration for Claim 1 in the proof of
Lemma \ref{no5edgecut}.}
\end{center}
\end{figure}

If $v_{1}$ is incident to $h_{1}$, then $f_{1}$ contains an $E(A,H)$
edge $v_{2}'v_{6}$ (see Fig.  24(b)). If $v_{3}$ is incident to
$h_{2}$, then $v_{4}',v_{5}'\in A$ and
$v_{7},v_{10},v_{11},v_{12}\in D_{0}$ by Ineqs. (5) and (6). Let
$f_{13},f_{14},f_{15}$ and $f_{16},f_{17}$ be the neighboring faces
of $h_{1}$ and $h_{2}$ (respectively) as shown in Fig. 24(b). It's
easy to see the neighboring faces of $h_{1}$ and $h_{2}$ are
pairwise different. Thus at least one of
$f_{8},f_{10},f_{16},f_{17}$ contains a contributing edge by Lemma
\ref{intersectingboth} in order to prevent the forbidden subgraph
$L$. That is, $p(V(h_{2}))\leq3$. If this additional contributing
edge belongs to $E(h_{1},h_{2})$ or $E(A,H)$, then
$p(H)+p(E(A,A))+p(D^{*})\leq(4+3)+3=10$ (impossible). If it belongs
to $E(A,A)$, then $p(H)+p(E(A,A))+p(D^{*})\leq(4+3)+3+1=11$ (also
impossible). If it belongs to $E(F_{1}^{*})$, then $F_{1}^{*}$ must
be incident to both $h_{1}$ and $h_{2}$ as $p(V(F_{1}^{*}))\leq3$,
which means
$p(V(h_{2}))\leq2,p(V(h_{1}))\leq3,p(V(F_{1}^{*}))\leq2$. Hence
$p(H)+p(E(A,A))+p(D^{*})\leq(3+2)+(3+2)=10$ (impossible). This
contradiction implies $v_{3}$ is incident to $A$. If $v_{4}$ is
incident to $h_{2}$, then $f_{4}$ contains an $E(A,H)$ edge
$v_{11}v_{5}'$ and $f_{2}$ also contains a contributing edge
belonging to $E(A,A)$ or $E(F_{1}^{*})$ by Lemma
\ref{characterizationf}. Thus
$|E(A,H)|+|E(A,A)|+s(\mathcal{D})\geq4$, contradicting Ineq. (5). So
$v_{4}$ is also incident to $A$. If $v_{5}$ is incident to $h_{2}$,
then $h_{1}$ and $h_{2}$ are incident and each of $f_{2},f_{3}$
contains a contributing edge, which is impossible by Ineq. (6). Now
all of $v_{2},\cdots,v_{5}$ are incident to $A$ and we once again
obtain that $|E(A,H)|+|E(A,A)|+s(\mathcal{D})\geq4$ (impossible).
Thus $v_{1}$ is incident to $A$. So does $v_{5}$ by symmetry.

If $v_{2}$ is incident to $h_{1}$, then $f_{1}$ contains an $E(A,H)$
edge $v_{1}'v_{6}$ and we have a similar situation as the case
$v_{1}$ incident to $h_{1}$. The same analysis will deduce that for
$v_{3}$ or $v_{4}$ incident to $h_{2}$ or $A$ we'll obtain
$p(H)+p(E(A,A))+p(D^{*})\leq(3+2)+(3+2)=10$ (impossible) or
$|E(A,H)|+|E(A,A)|+s(\mathcal{D})\geq4$ (also impossible). Thus
$v_{2}$ is incident to $A$. Similarly for $v_{4}$.

If $v_{3}$ is incident to $h_{1}$, then
$v_{6},v_{7},v_{10},v_{11}\in D_{0}$ and $f_{5}$ contains an
$E(A,A)$ or $E(F_{1}^{*})$ edge by Lemma \ref{characterizationf}.
Denote by $f_{14},f_{15}$ the neighboring faces of $h_{1}$ different
form $f_{2},f_{3},f_{8},f_{10}$ (see Fig.  24(c)). Firstly suppose
$v_{1}'v_{13}\in E(A,A)$ (see Fig.  24(d)). By Ineqs. (5) and (6)
there is at most one another $E(h_{1},h_{2})$ or $E(A,H)$ or
$E(A,A)$ edge or one $F_{1}^{*}\in\mathcal{D}$ with
$|\nabla(F_{1}^{*})|=5$. For none or one $E(h_{1},h_{2})$ or one
$E(A,H)$ edge existence, we have
$p(H)+p(E(A,A))+p(D^{*})\leq(4+4)+3=11$ (impossible). For one
additional $E(A,A)$ edge existence, say $e\in E(A,A)$, we obtain
$p(\{e\})=2$, otherwise, $p(H)+p(E(A,A))+p(D^{*})\leq(3+4)+1+3=11$
(impossible). Moreover, $f_{6}$ intersects $h_{2}$ with the boundary
$HHD_{0}AAD_{0}$ by Observation \ref{characterizationm2} and $e$ can
not be contained in the neighboring faces of $h_{1}$ and $h_{2}$ by
Lemma \ref{characterizationh}(1)(3). Thus one of
$f_{8},f_{10},f_{14},f_{15}$ must intersect both $h_{1}$ and $h_{2}$
with the boundary $HHD_{0}HHD_{0}$ and the remaining neighboring
faces of $h_{1}$ and $h_{2}$ are pentagons with the boundaries
$HHD_{0}AD_{0}$ in order to avoid the occurring the forbidden
subgraph $L$. From the above analysis we can construct the fullerene
graph. It's easy to see the four neighboring faces
($f_{14},\cdots,f_{17}$) of $h_{1}$ and $h_{2}$ form a subgraph $L$
(see Fig.  24(d)) (impossible). For $F_{1}^{*}$ existence, $f_{6}$
must contain an $E(F_{1}^{*})$ edge and $F_{1}^{*}$ must be incident
to $h_{2}$ but not $h_{1}$, otherwise, either it happens a subgraph
$R$ in $F$ or $p(H)+p(E(A,A))+p(D^{*})\leq11$, both of which are
impossible by the assumption and Ineq. (7). Hence the positions of
$h_{2}$ and $F_{1}^{*}$ are known and we have the fullerene graph
$F_{46}^{3}$ as shown in Fig.  24(e), which is excluded in the
assumption. Next we assume $v_{12},v_{13}\in V(F_{2}^{*})$. Then one
of $f_{6},f_{12}$ intersects $h_{2}$ and the remaining one contains
an $E(A,A)$ edge by Lemma \ref{characterizationf}. Without loss of generality, suppose $f_{6}$ intersects $h_{2}$. Again four
neighboring faces of $h_{1}$ and $h_{2}$ form a subgraph $L$
(impossible). This contradiction means $v_{3}$ is incident to $A$.
Thus all of $v_{1},\cdots,v_{5}$ are incident to $A$ and by Lemma
\ref{characterizationf} $|E(A,A)|+s(\mathcal{D})\geq4$ (impossible).
\end{proof}

To make a summary, we can see $p(V(F^{*}))\leq2$ for any
$F^{*}\in\mathcal{D}$ with $|\nabla(F^{*})|=5$.

\textbf{Claim 2:}  There is no a pentagonal component $F^{*}\in
\mathcal{D}^{*}$ such that $p(V(F^{*}))=2$.

\begin{proof} By contrary such a component $F^{*}$ exists. Label the
boundary of $F^{*}$ and its neighboring faces as shown in Fig.
25(a). We also have if there exists another
$F_{1}^{*}\in\mathcal{D}^{*}$, then $F_{1}^{*}$ is also a pentagon.
As $p(V(F^{*}))=2$, we can suppose $f_{1}$ is pentagonal. Similarly
as Claim 1, for $v_{2}$ incident to $h_{1}$ we have $f_{1}$ contains
an $E(A,H)$ edge $v_{1}'v_{6}$ (see Fig.  25(b)) and $v_{3}$ cannot
be incident to $h_{2}$, otherwise, $h_{1}$ and $h_{2}$ are incident
and $p(H)+p(E(A,A))+p(\mathcal{D}^{*})\leq(4+4)+2=10$ (impossible).
Moreover, $v_{4}$($v_{5}$) also cannot be incident to $h_{2}$,
otherwise, $p(V(h_{2}))\leq3$ and $f_{5}$($f_{3}$) contains an
$E(A,A)$ or $E(F_{1}^{*})$ edge by Lemma \ref{characterizationf},
but $p(H)+p(E(A,A))+p(\mathcal{D}^{*})\leq(4+3)+2+2=11$
(impossible). So all of $v_{3}',v_{4}',v_{5}'$ belong to $A$ and
$|E(A,H)|+|E(A,A)|+s(\mathcal{D})\geq5$ (impossible). Hence
$v_{2}'\in A$. So does $v_{1}'$ by symmetry and $v_{6}\in D_{0}$.
For $v_{3}$ incident to $h_{1}$, then $v_{4}$ cannot be incident to
$h_{2}$, otherwise, $h_{1},h_{2}$ are incident and
$p(V(h_{1}))\leq3,p(V(h_{2}))\leq3$ and $f_{5}$ contains an $E(A,A)$
or $E(F_{1}^{*})$ edge. However, no matter which case occurs we'll
have $p(H)+p(E(A,A))+p(\mathcal{D}^{*})\leq(3+3)+2+2=10$
(impossible). If $v_{5}$ is incident to $h_{2}$, then also
$p(V(h_{1}))\leq3,p(V(h_{2}))\leq3$. Denote by
$f_{6},f_{7},f_{8},f_{9}$($f_{10},f_{11},f_{12},f_{13}$) the four
neighboring faces of $h_{1}$($h_{2}$) different from
$f_{2},f_{3}$($f_{4},f_{5}$)(see Fig.  25(c)). Then the eight faces
$f_{6},f_{7},\cdots,f_{13}$ are pairwise different by Lemma
\ref{equalemptyset}. Hence in order to prevent the occurring of the
forbidden subgraph $L$, at least one of $f_{6},f_{7},f_{8},f_{9}$
contains a contributing edge by Lemma \ref{intersectingboth}.
Similarly for $f_{10},f_{11},f_{12},f_{13}$. If these contributing
edges belong to $E(A,H)$ or $E(A,A)$, then at most two such edges
exist in $F$ and $p(H)+p(E(A,A))+p(D^{*})\leq(3+3)+2+2=10$ (Note an
$E(A,A)$ edge gives rise to one hexagon by
Lemma\ref{characterizationh}(1)(3)) (impossible). So there must
exist additional $E(\mathcal{D}^{*})$ edges (other than $E(F^{*})$),
which means at least one of $h_{1},h_{2}$(say $h_{1}$) is incident
to another pentagonal factor-critical component, say $F_{1}^{*}$
(see Fig.  25(c)). Thus $p(V(h_{1}))\leq2$ since the two common
neighboring faces of $h_{1}$ and $F_{1}^{*}$ are hexagons by
Lemma\ref{characterizationf}. On the other hand, if the contributing
edge contained in $f_{10}$ or $f_{11}$ or $f_{12}$ or $f_{13}$
belongs to $E(A,A)$ or $E(A,H)$, then we once again obtain that
$p(H)+p(E(A,A))+p(D^{*})\leq(2+3)+1+(2+2)=10$,
 (impossible). If the contributing edge
 contained in $f_{j}$ for some $j\in \{10,11,12,13\}$ belongs to $F_{2}^{*}\in\mathcal{D}$ with $|\nabla(F_{2}^{*})|=5$, then
 similarly as the case above we have $p(V(h_{2}))\leq2$. If
 $F_{2}^{*}=F_{1}^{*}$, then
 $p(H)+p(E(A,A))+p(D^{*})\leq(2+2)+2+(2+2)=10$ (impossible). If $F_{2}^{*}\neq F_{1}^{*}$, then $p(E(A,A))=0$ and $p(H)+p(E(A,A))+p(D^{*})\leq(2+2)+(2+2+2)=10$
 (impossible). Hence $v_{5}$ is also incident to $A$. Now both of
 $f_{4},f_{5}$ contain an $E(A,A)$ or $E(F_{1}^{*})$ edge by Lemma
 \ref{characterizationf}(1). However, using a similar discussion as
 the cases above we can know no matter which case occurs we'll obtain
 $p(H)+p(E(A,A))+p(D^{*})\leq11$ (impossible). Thus $v_{3}$
 is incident to $A$. So does $v_{5}$ by symmetry. Now both of $f_{2},f_{5}$ contain an $E(A,A)$ or $E(F_{1}^{*})$
 edge by Lemma \ref{characterizationf} and for $v_{4}$
 incident to $h_{1}$ we obtain $p(H)+p(E(A,A))+p(D^{*})\leq11$
 (impossible) and for $v_{4}$ incident to $A$ we can gain $|E(A,A)|+s(\mathcal{D})\geq4$ (also impossible).\end{proof}

\begin{figure}[h]
\begin{center}
\includegraphics[scale=0.7
]{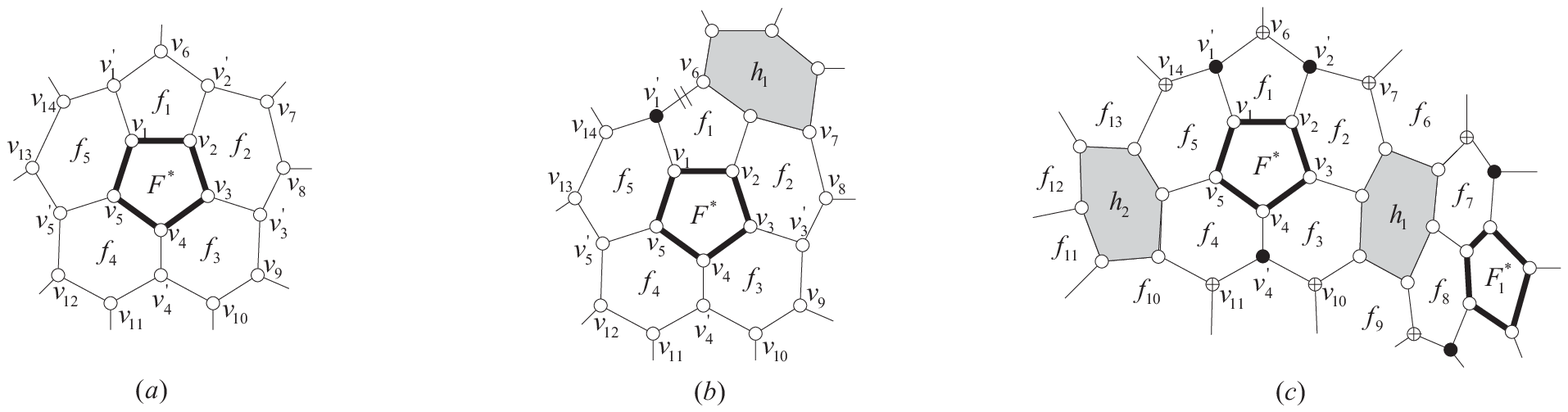}\\{Figure 25. Illustration for Claim 2 in the proof of
Lemma \ref{no5edgecut}.}
\end{center}
\end{figure}

From the above analysis, we have $p(V(F^{*}))\leq1$ for any
 $F^{*}\in\mathcal{D}$ with $|\nabla(F^{*})|=5$.

\textbf{Claim 3:}  There exists no a pentagonal component $F^{*}\in
\mathcal{D}^{*}$ such that $p(V(F^{*}))=1$.

\begin{proof} Suppose there is one such component $F^{*}$. Also denote by $v_{1}v_{2}v_{3}v_{4}v_{5}v_{1}$
the boundary of $F^{*}$ and $f_{1},\cdots,f_{5}$ the neighboring
faces of $F^{*}$ along the edges
$v_{1}v_{2},v_{2}v_{3},\cdots,v_{5}v_{1}$, respectively. Since
$p(V(F^{*}))=1$,  $f_{i}$ is hexagonal for all $i\in
\{1,2,\cdots,5\}$. Similarly as Claims 1,2, for at most one of
$v_{1},v_{2},\cdots,v_{5}$ incident to $h_{1}$, we obtain
$|E(A,A)|+s(\mathcal{D})\geq4$ (impossible). For two of
$v_{1},v_{2},\cdots,v_{5}$ incident to $h_{1}$ and $h_{2}$
(respectively), we always have
$|E(A,A)|+s(\mathcal{D})+|E(V(h_{1}),V(h_{2}))|\geq4$ when the two
vertices incident to $h_{1}$ and $h_{2}$ are adjacent on
$\partial(F^{*})$ or $p(H)+p(E(A,A))+p(D^{*})\leq11$ when the two
vertices incident to $h_{1}$ and $h_{2}$ are not adjacent on
$\partial(F^{*})$, which are impossible.
\end{proof}


\end{document}